\newcommand{\mc}{\mathcal}
\newcommand{\mf}{\mathfrak}
\newtheorem{Theorem}{Theorem}[section]
\newtheorem{Lemma}[Theorem]{Lemma}
\newtheorem{lemma}[Theorem]{Lemma}
\newtheorem{Corollary}[Theorem]{Corollary}
\newtheorem{Proposition}[Theorem]{Proposition}
\newtheorem{Definition}[Theorem]{Definition}
\newtheorem{Example}[Theorem]{Example}
\newtheorem{Remark}[Theorem]{Remark}
\newcommand{\Vol}{\operatorname{Vol}}
\DeclareMathOperator{\vol}{vol}
\newcommand{\pa}{\partial}
\newcommand{\del}{\partial}
\newcommand{\op}{\operatorname}
\newcommand{\abs}[1]{\left| #1 \right|}
\newcommand{\norm}[1]{\left\| #1 \right\|}
\newcommand{\inner}[1]{\left\langle #1 \right\rangle}
\newcommand{\floor}[1]{\left\lfloor #1 \right\rfloor}
\newcommand{\set}[1]{\left\{ #1 \right\} }
\newcommand{\tensor}{\otimes}
\newcommand{\grad}{\nabla}
\newcommand{\til}{\widetilde}
\newcommand{\of}{\circ}
\newcommand{\h}{{hyp}}
\newcommand{\mres}{%
  \,\raisebox{-.127ex}{\reflectbox{\rotatebox[origin=br]{-90}{$\lnot$}}}\,}%
\DeclareMathOperator{\Jac}{Jac}
\DeclareMathOperator{\Hess}{Hess}
\DeclareMathOperator{\RCD}{\ensuremath{\mathsf{RCD}}}
\DeclareMathOperator{\CAT}{\ensuremath{\mathsf{CAT}}}
\DeclareMathOperator{\CBB}{\ensuremath{\mathsf{CBB}}}
\DeclareMathOperator{\diam}{diam}
\DeclareMathOperator{\bary}{bar}
\DeclareMathOperator{\argmin}{argmin}
\DeclareMathOperator{\ind}{ind}
\DeclareMathOperator{\pre}{pre}
\DeclareMathOperator{\esssup}{ess-sup}
\DeclareMathOperator{\directsum}{\bigoplus}
\newcommand{\mX}{\mf{m}}
\newcommand{\tmX}{\bar{\mf{m}}}
\newcommand{\R}{{\bf R}}
\newcommand{\Z}{{\bf Z}}
\newcommand{\N}{{\bf N}}
\newcommand{\Q}{{\bf Q}}
\newcommand{\C}{{\bf C}}
\newcommand{\HH}{{\bf H}}
\newcommand{\HK}{{\bf H}_{\bf{K}}}
\newcommand{\OO}{{\bf O}}
\newcommand{\eps}{\epsilon}
\newcommand{\epsv}{\varepsilon}
\newcommand{\Ga}{\Gamma}
\newcommand{\ga}{\gamma}
\newcommand{\cout}[1]{}
\renewcommand{\hat}{\widehat}
\renewcommand{\bar}{\overline}
\def\be#1\ee{\begin{align}\begin{split} #1 \end{split}\end{align}}
\def\beq#1\eeq{\begin{align*}\begin{split} #1 \end{split}\end{align*}}
\title{Volume entropy and rigidity for $\RCD$-spaces}
\author[Connell]{Chris Connell}
\address[Chris Connell]{Department of Mathematics, Indiana University, Bloomington, IN 47405 }
\email{connell@iu.edu}
\author[Dai]{Xianzhe Dai}
\address[Xianzhe Dai]{Department of Mathematics, UCSB, Santa Barbara CA 93106}
\email{dai@math.ucsb.edu}
\author[N\'u\~nez-Zimbr\'on]{Jes\'us N\'u\~nez-Zimbr\'on}
\address[Jes\'us N\'u\~nez-Zimbr\'on]{Facultad de Ciencias, Universidad Nacional Aut\'{o}noma de M\'{e}xico, Coyoac\'{a}n, Mexico}
\email{nunez-zimbron@ciencias.unam.mx}
\author[Perales]{Raquel Perales}
\address[Raquel Perales]{Centro de Investigaci\'on en Matem\'aticas,
De Jalisco s/n, Valenciana, Guanajuato, Gto. Mexico. 36023}
\email{raquel.perales@cimat.mx}
\author[Su\'arez-Serrato]{Pablo Su\'arez-Serrato}
\address[Pablo Su\'arez-Serrato]{ Geometric Intelligence Laboratory, Electrical and Computer Engineering, University of California, Santa Barbara, USA,
{\it and}
Instituto de Matem\'aticas, Universidad Nacional Aut\'onoma de M\'exico UNAM, M\'exico Tenochtitlan}
\email{pablo@im.unam.mx}
\author[Wei]{Guofang Wei}
\address[Guofang Wei]{Mathematics Department, University of California, Santa Barbara, CA 93106}
\email{wei@math.ucsb.edu}
\subjclass[2020]{53C23, 46E36}
\numberwithin{equation}{section}
\begin{document}

\begin{abstract}
We develop the barycenter technique of Besson--Courtois--Gallot so that it can be applied on $\RCD$ metric measure spaces.
Given a continuous map $f$ from a non-collapsed $\RCD(-(N-1),N)$ space $X$ without boundary to a locally symmetric $N$-manifold we show a version of BCG's entropy-volume inequality. The lower bound involves homological and homotopical indices which we introduce. We prove that when equality holds and these indices coincide $X$ is a locally symmetric manifold, and $f$ is homotopic to a Riemannian covering whose degree equals the indices.
Moreover, we show a measured Gromov--Hausdorff stability
of $X$ and $Y$ involving the homotopical invariant.
As a byproduct, we extend a Lipschitz volume rigidity result of Li--Wang to $\RCD(K,N)$ spaces without boundary.
Finally, we include an application of these methods to the study of Einstein metrics on $4$-orbifolds.
\end{abstract}

\maketitle

\tableofcontents

\section{Introduction}\label{sec:Intro}

We define the  {\em volume entropy} $h(Z)$ of a metric measure space $(Z,d,\mX)$ (cf. Reviron \cite{Reviron} and Besson--Courtois--Gallot--Sambusetti\cite{BCG-Samb}) as
\[
h(Z)=\limsup_{R\to\infty}\frac{\log \mX(B(x,R))}{R},
\]
where $B(x,R)$ is the geodesic ball of radius $R$ in $Z$ centered at $x\in Z$. By the triangle inequality, the value of $h(Z)$ is independent of the point $x$.

Traditionally, the universal cover $\til{Z}$ is used in the definition of volume entropy.
On the one hand, our convention differs in that we compute the volume on the space $Z$ itself and not on $\til{Z}$.
In other words, the usual volume entropy in our notation would be indicated by $h(\til{Z})$.
On the other hand, we will crucially use intermediate covers similarly to Sambusetti \cite{Sambusetti-1999}.

Whenever $Z$ is a length space with a cocompact group of isometries, the $\limsup$ in the definition of $h(Z)$ may be replaced by a limit (see Manning \cite{M} for the manifold case, and \cite[Proposition 3.3]{BCG-Samb} for metric measure spaces).

In this paper we extend the seminal minimal volume entropy rigidity results of Besson--Courtois--Gallot  to $\RCD$-spaces.
These are metric measure spaces$(X,d,\mf{m})$ with a {\it synthetic} lower Ricci bound and a dimension upper bound.
In particular, these results are obtained by extending the {\it barycenter technique}, as developed by Besson--Courtois--Gallot \cite{Besson-Courtois-Gallot:95,BCG-rend}---and in their collaboration with B\`essieres \cite{Bess-BCG}---for manifolds with Ricci curvature bounded below.
See the following subsection for some historical comments about this technique.
Sturm \cite{Sturm03} established various results for barycenters in CAT$(0)$ spaces which will be useful in this extension.
We also employ in a critical way some of the machinery developed by Sambusetti \cite{Sambusetti-1999} to generalize \cite{Besson-Courtois-Gallot:95} and \cite{BCG-acta}.

$\RCD$-spaces were shown to admit universal covers by Mondino and Wei \cite{Mondino-Wei}. Recently, Wang \cite{Wang2022} showed that their universal covers are also semi-locally simply connected.
When the measure of $X$ equals the $N$-dimensional Hausdorff measure, $\mf{m} = \mc{H}^{N}$, then
$(X,d,\mf{m})$ is called {\it non-collapsed}, and  the {\em boundary} $\pa X$ of $X$ can be defined.
See Section \ref{sec:Prelims} for more details about $\RCD$-spaces.
In a complementary direction, we have showed a maximal volume entropy rigidity for $\RCD$-spaces \cite{stormmod}.

Certain important aspects of the theory of manifolds are lacking for $\RCD$ spaces. One of these is the degree theory of maps, which we need to state our results.
This motivates the definitions of {the homotopy invariants}
$\ind_\pi(f)$ and $\ind_H(f)$ defined as follows.
Given a continuous map $f:X\to Y$ between topological spaces, with topological $\dim Y = N$,
we define the {\em fundamental index of $f$} by:
\begin{equation}
\ind_{\pi}(f)=\begin{cases}[\pi_1(Y):f_*\pi_1(X)] & \text{if } [\pi_1(Y):f_*\pi_1(X)]<\infty \\
    0 & \text{otherwise.}\end{cases}  \label{def-f-ind}
\end{equation}
Likewise, we define the {\em  homological index of $f$} by:
\begin{equation}
\ind_H(f)=\begin{cases}[H_N(Y):f_*H_N(X)] & \text{if }[H_N(Y):f_*H_N(X)]<\infty \\
    0 & \text{otherwise.}\end{cases}  \label{def-h-ind}
\end{equation}
Here, $H_N(X)$ denotes the $N$-th singular homology of $X$ with $\Z$-coefficients.

When $Y$ admits a universal cover, we always have that $\ind_\pi(f)$ divides $\ind_H(f)$ (see Proposition \ref{prop:pre-pi1} and Remark \ref{remark:fundamental and homological indeces}
where we observe that $\ind_H(f)$ may be strictly larger than $\ind_\pi(f)$). In the case that $X$ and $Y$ are closed oriented manifolds we have $\ind_H(f)=\abs{\deg(f)}$. Thus, our definition of $\ind_H(f) $  can be seen as a generalization of $\abs{\deg(f)}$ to the non-smooth setting.

In what follows, for any continuous map $f:X\to Y$, we let $\bar{X}$ be the cover of $X$ corresponding to the subgroup $\ker f_*<\pi_1(X)$ where $f_*:\pi_1(X)\to\pi_1(Y)$ is the induced map on fundamental groups. In particular, $\pi_1(\bar{X})=\ker f_*$ and $\Ga:=\pi_1(X)/\ker f_*$ acts on $\bar{X}$ by deck transformations. Observe that $\bar{X}$ is the smallest cover for which there is a lift of $f$ to a map $\til{f}:\bar{X}\to\til{Y}$, where $\til{Y}$ is the universal cover of $Y$.

Our first main result is:
\begin{Theorem}\label{thm:entrig}
Let $K \in \R$, $N \in \N$ with $N\geq 3$, $(X,d,\mc{H}^{N})$ be an $\RCD(K,N)$ space without boundary, and $Y$ be a closed orientable negatively curved locally symmetric space of dimension $N$.
Then,  for any continuous map $f:X\to Y$,
\begin{equation}\label{eq:Thm1-ineq}
h(\bar{X})^{N}\ \mc{H}^{N} (X)\geq \ind_H(f)\ h(\til{Y})^{N}\ \mc{H}^{N}(Y).
\end{equation}
Moreover, if we have equality and $\ind_H(f)=\ind_\pi(f)$, then $X$ is isometric to a locally symmetric manifold and $f$ is homotopic to a Riemannian cover of degree $\ind_{\pi}(f)$, after possibly dilating the metric on $X$.
\end{Theorem}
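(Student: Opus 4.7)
The plan is to adapt the Besson--Courtois--Gallot barycenter method, in the variant using intermediate covers due to Sambusetti, with the smooth target $Y$ playing its usual role and the RCD source $X$ entering only through its measure and tangential structure. Let $\Ga=\pi_1(X)/\ker f_*$; it acts on $\til Y$ as a subgroup of $\pi_1(Y)$ and on $\bar X$ by deck transformations, and the lift $\til f\co\bar X\to\til Y$ is $\Ga$-equivariant. For each $c>h(\bar X)$ and each $x\in\bar X$, I define a probability measure $\mu_c^x$ on the ideal boundary $\pa_\infty\til Y$ by
\[
\mu_c^x = \frac{1}{\mc N_c(x)}\int_{\bar X} \nu_{\til f(y)}\, e^{-c\,d_{\bar X}(x,y)}\,d\mc H^N(y),
\]
where $\nu_z$ is the visual Patterson--Sullivan-type measure on $\pa_\infty\til Y$ based at $z\in\til Y$ and $\mc N_c(x)$ normalizes $\mu_c^x$ to a probability measure; the convergence of $\mc N_c(x)$ follows from $c>h(\bar X)$ together with the volume growth inherited from $\bar X$ being $\RCD(K,N)$. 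The natural map is then
\[
F_c(x)=\bary(\mu_c^x),
\]
computed via Sturm's barycenter in the $\CAT(-1)$ space $\til Y$. By construction $F_c$ is continuous and $\Ga$-equivariant, so it descends to a map $X\to Y$ which, as in BCG, is homotopic to $f$.

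Next I would prove the pointwise Jacobian estimate. At $\mc H^N$-almost every $x\in\bar X$ (the $N$-dimensional regular set, which has full measure in the non-collapsed RCD setting), implicit differentiation of the barycenter equation expresses $dF_c$ in terms of the two symmetric bilinear forms on $T_{F_c(x)}\til Y$,
\[
H_c(x)=\int_{\pa_\infty\til Y}\!\!dB_\theta\tensor dB_\theta\,d\mu_c^x,\qquad K_c(x)=\int_{\pa_\infty\til Y}\!\!\Hess B_\theta\,d\mu_c^x,
\]
and BCG's algebraic lemma for the locally symmetric target yields the sharp bound
\[
|\Jac F_c|(x)\leq\bigl(c/h(\til Y)\bigr)^N,
\]
with equality forcing the symmetric-space normal forms for $H_c$ and $K_c$. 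The Jacobian itself is interpreted via the Gigli--Pasqualetto differential calculus, under which a Lipschitz map from a non-collapsed RCD space into a Riemannian manifold admits a well-defined differential and Jacobian $\mc H^N$-a.e. Integrating over a fundamental domain for $\Ga$ on $\bar X$, equivalently over $X$, gives
\[
\int_X |\Jac F_c|\,d\mc H^N \leq \bigl(c/h(\til Y)\bigr)^N\,\mc H^N(X).
\]
The left-hand side is at least $\ind_H(f)\,\mc H^N(Y)$: the map $F_c$ represents the same class as $f$ in $H_N(Y)$, so it covers $[Y]$ with multiplicity $\ind_H(f)$, and an area formula for Lipschitz maps out of $X$ converts this topological count into the integral lower bound. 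Letting $c\searrow h(\bar X)$ gives inequality~\eqref{eq:Thm1-ineq}.

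For the rigidity part, equality in \eqref{eq:Thm1-ineq} forces the Jacobian bound to be saturated $\mc H^N$-a.e.\ on $X$, which imposes the BCG symmetric-space Hessian identity on the Busemann data at every Euclidean tangent cone of $X$; combined with Mondino--Wei's universal cover theorem and Wang's semi-local simple connectivity, this rules out singular tangent cones and upgrades $X$, after rescaling, to a smooth Riemannian manifold locally isometric to $\til Y$. The hypothesis $\ind_H(f)=\ind_\pi(f)$ ensures that $F_c$ is injective on $\Ga$-orbits, so the induced map $X\to Y$ is a Riemannian covering of degree $\ind_\pi(f)$. The main obstacle I expect is the volume-to-degree step: classical degree theory is not available on an RCD domain, so one must replace it with the homological index $\ind_H(f)$ and run an area formula on the singular source to convert algebraic multiplicity into the Jacobian integral bound; this is precisely why the pair $(\ind_H,\ind_\pi)$ is introduced and why the non-collapsed hypothesis, providing $\mc H^N$ and Euclidean tangents a.e., is essential. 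A secondary difficulty is promoting $\mc H^N$-a.e.\ saturation of the Jacobian identity into a global smooth locally symmetric structure, for which I would combine the stability theory of RCD spaces with the $\CAT(-1)$ rigidity encoded in the Busemann geometry of $\til Y$.
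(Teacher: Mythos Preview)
Your outline for the inequality is broadly correct and constitutes a genuine alternative to the paper's route. You use the boundary (Busemann-function) barycenter of BCG, while the paper deliberately uses the interior $d^2$-barycenter of Sambusetti and Sturm: $\til F_s(x)=\bary(\til f_*\mu_x^s)$ with $d\mu_x^s=e^{-sd(x,\cdot)}d\bar{\mf m}$ a measure on $\bar X$ (not on $\partial_\infty\til Y$). Both versions yield the same sharp Jacobian bound $(s/h(\til Y))^N$ for a rank-one target. The paper's choice, however, buys something you do not address: Sturm's theorem that $\bary:\mc P^1(\til Y)\to\til Y$ is $1$-Lipschitz for the Wasserstein distance, combined with a direct $W_1$-estimate on $x\mapsto\mu_x^s/\|\mu_x^s\|$, gives that $F_s$ is Lipschitz. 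You need this Lipschitz property before you can invoke the coarea/area formula on the RCD source, and the boundary version requires a separate regularity argument. (Also, Sturm's barycenter is for measures on $\til Y$, not on $\partial_\infty\til Y$; for your $\mu_c^x$ you must use the Busemann-function minimization, not Sturm.)

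The rigidity argument has a genuine gap. Saturation of the Jacobian bound at $\mc H^N$-a.e.\ regular point only constrains the target-side forms $H_c,K_c$ and the differential $dF_c$ at those points; it says nothing about the singular set of $X$, and neither Mondino--Wei nor Wang's semi-local simple connectivity has any bearing on ruling out singular tangent cones. The paper proceeds entirely differently. First it proves (\S8) that the forms $H_x^{s_i}$ converge \emph{uniformly} to $\tfrac1N I$ on a full-measure set, which yields uniform Lipschitz bounds on $F_{s_i}$ and, via Arzel\`a--Ascoli and the Sobolev-to-Lipschitz property for maps between RCD spaces, a $1$-Lipschitz limit $F$ homotopic to $f$. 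Then, using equality plus the coarea formula, $F$ is volume-preserving, and the decisive step is an \emph{extension of the Li--Wang Lipschitz volume rigidity theorem} to non-collapsed RCD spaces without boundary: a $1$-Lipschitz volume-preserving map between such spaces is an isometry. This is what forces $X$ to be locally symmetric; no direct tangent-cone argument is attempted. Finally, the hypothesis $\ind_H(f)=\ind_\pi(f)$ is not used for ``injectivity on $\Ga$-orbits'' but to pass to the finite cover $\hat Y$ of $Y$ with $\pi_1(\hat Y)=f_*\pi_1(X)$, reducing to the case $\ind_\pi(\hat f)=1$ where the above $1$-Lipschitz limit argument applies.
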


\begin{Remark}
Note that since the universal cover $\til{X}$ is a cover of $\bar{X}$,
we have $h(\til{X})\geq h(\bar{X})$, and thus the above theorem implies the usual entropy estimate with the inequality replaced by
\[h(\til{X})^{N}\ \mc{H}^{N} (X)\geq \ind_H(f)\ h(\til{Y})^{N}\ \mc{H}^{N}(Y),\]
and similarly for the equality case (compare with the manifold case in Section 2 of \cite{Sambusetti-1999}).
\end{Remark}

Our second main theorem relaxes the condition on the target manifold $Y$, and
removes the volume entropy by normalization to obtain a volume rigidity theorem.

\begin{Theorem}\label{thm:schwarz}
Let $K \in  \R$, $N \in \N$ with $N\geq 3$.
Let $(X,d_X,\mc{H}^{N})$ be an $\RCD(-(N-1),N)$ space and $(Y,d_Y,\mc{H}^{N})$ be a compact orientable space that is both locally $\CAT(-1)$ and an $\RCD(K,N)$ space with $\pa Y=\emptyset$. Then, for any continuous map $f:X\to Y$,
\begin{equation}\label{eq:Thm2-ineq}
\mc{H}^{N}(X)\geq \ind_H(f)\ \mc{H}^{N}(Y).
\end{equation}
Moreover, if $X$ has no boundary
and
\begin{equation}\label{eq:Thm2-equal}
\mc{H}^{N}(X)=\ind_\pi(f) \mc{H}^{N}(Y),
\end{equation}
then $X$ and $Y$ are isometric to hyperbolic manifolds and $f$ is homotopic to a degree $\ind_\pi(f)$ Riemannian cover with respect to the constant curvature $-1$ metrics.
\end{Theorem}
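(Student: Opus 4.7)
The plan is to run the BCG-style barycenter argument of Theorem~\ref{thm:entrig} in the weaker setting where the target is only locally $\CAT(-1)$ rather than locally symmetric, and then combine the resulting entropy--volume inequality with the sharp entropy bounds coming from the curvature hypotheses on each side. The key observation is that the barycenter map $F_c : X \to Y$ built via Sturm's $\CAT(0)$ barycenters on $\til Y$ uses the $\CAT(-1)$ structure only through a Hessian-type comparison for Busemann-like functions on $\til Y$, giving the pointwise estimate $|\Jac F_c|(x) \leq (c/(N-1))^N$ for any $c > h(\bar X)$, $\mc{H}^N$-a.e.\ on $X$.

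First I would collect the two sharp entropy bounds that make this work. On the $X$ side, the $\RCD(-(N-1),N)$ condition together with Bishop--Gromov comparison gives $h(\til X) \leq N-1$, and hence $h(\bar X) \leq N-1$ since $\bar X$ is an intermediate cover. On the $Y$ side, $\til Y$ is $\CAT(-1)$ with a cocompact $\pi_1(Y)$-action (the universal cover exists by Mondino--Wei, and is semi-locally simply connected by Wang), so a Manning-type lower bound for $\CAT(-1)$ spaces of Hausdorff dimension $N$ yields $h(\til Y) \geq N-1$. With these in hand, the proof of Theorem~\ref{thm:entrig}, adapted to a $\CAT(-1)$ target, produces the degree-type comparison
\[
\ind_H(f)\, \mc{H}^N(Y) \;\leq\; \int_X |\Jac F_c|\, d\mc{H}^N \;\leq\; \Bigl(\tfrac{c}{N-1}\Bigr)^{N} \mc{H}^N(X).
\]
Letting $c \to h(\bar X) \leq N-1$ collapses the prefactor to at most $1$ and gives the desired inequality \eqref{eq:Thm2-ineq}.

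For the rigidity case, suppose $\pa X = \emptyset$ and $\mc{H}^N(X) = \ind_\pi(f)\, \mc{H}^N(Y)$. Since $\ind_\pi(f)$ divides $\ind_H(f)$ and we already have $\ind_H(f)\, \mc{H}^N(Y) \leq \mc{H}^N(X)$, the equality forces $\ind_\pi(f) = \ind_H(f)$, together with equality at every stage above: in particular $h(\bar X) = N-1$, $h(\til Y) = N-1$, and $|\Jac F_{N-1}| \equiv 1$ almost everywhere. The maximal-entropy rigidity result \cite{stormmod} applied to $(X,d_X,\mc{H}^N)$ then identifies $X$ with a hyperbolic manifold. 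Once $X$ is hyperbolic, the a.e.\ saturation of the Jacobian bound (combined with the non-collapsed $\RCD(K,N)$ regularity of $Y$ and the $\CAT(-1)$ structure on $\til Y$) forces $F_{N-1}$ to be a local isometry; reducing to the equality case of Theorem~\ref{thm:entrig} then shows that $Y$ is isometric to a hyperbolic manifold and that $f$ is homotopic to a Riemannian covering of degree $\ind_\pi(f)$.

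The main obstacle is the Jacobian estimate for the barycenter map in the weaker setting of a locally $\CAT(-1)$ target instead of the rank-one symmetric targets treated in Theorem~\ref{thm:entrig}. In the smooth Riemannian case this is exactly BCG's Schwarz-lemma type estimate, but transferring it to the non-smooth $\RCD$/$\CAT$ setting requires carefully combining the barycenter Hessian calculus on $\til Y$ (where one only has one-sided comparison) with the tangent-cone structure of the non-collapsed $\RCD(-(N-1),N)$ domain. Equally delicate is upgrading a.e.\ local isometry of $F_{N-1}$ to a genuine Riemannian covering $X \to Y$, which relies on the non-collapsed $\RCD(K,N)$ regularity of $Y$ and ultimately on the rigidity already established in Theorem~\ref{thm:entrig}.
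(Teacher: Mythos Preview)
Your argument for the inequality \eqref{eq:Thm2-ineq} is essentially the paper's: the Jacobian bound $\Jac F_s \leq (s/(N-1))^N$ is precisely the second estimate in Proposition~\ref{prop:Jac_est}, and together with the coarea formula, $\pre(F_s)\geq \ind_H(f)$ (Theorem~\ref{thm:ind-equal-homind}), and $h(\bar X)\leq N-1$ from the $\RCD(-(N-1),N)$ hypothesis, this yields the inequality after letting $s\to h(\bar X)$. (The bound $h(\til Y)\geq N-1$ that you mention is not actually needed for \eqref{eq:Thm2-ineq}.)

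The rigidity argument, however, has two genuine gaps. First, the map ``$F_{N-1}$'' does not exist: the natural maps $F_s$ are only defined for $s>h(\bar X)$, and in the equality case $h(\bar X)=N-1$, so one must \emph{construct} a limit map $F=\lim F_{s_i}$ and prove it is $1$-Lipschitz. This is not automatic from the a.e.\ Jacobian bound; it is the content of Proposition~\ref{prop:F_converge} (proved in Section~\ref{sec:FinW1infty}), which requires showing that the auxiliary forms $H_x^{s_i}$ converge uniformly to $\frac{1}{N}I$ on a full-measure set and then invoking the Sobolev-to-Lipschitz property for maps. Second, your reduction to the equality case of Theorem~\ref{thm:entrig} does not apply: that theorem requires $Y$ to be negatively curved \emph{locally symmetric}, whereas here $Y$ is only locally $\CAT(-1)$ and $\RCD(K,N)$. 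The paper does not reduce to Theorem~\ref{thm:entrig}; instead, once the $1$-Lipschitz limit $F$ is in hand, it verifies that $F$ is volume preserving via coarea and then applies the Lipschitz volume rigidity Theorem~\ref{thm:LV-rigid} directly to conclude that $F$ is an isometry.

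Your idea of invoking the maximal entropy rigidity of \cite{stormmod} to deduce that $X$ is hyperbolic from $h(\til X)=N-1$ is a legitimate alternative first step, but it does not bypass either obstacle above: you would still need the limit $1$-Lipschitz map onto the a priori non-smooth $Y$, and you would still need Lipschitz volume rigidity (not Theorem~\ref{thm:entrig}) to conclude that $Y$ is hyperbolic and $F$ is an isometric cover.
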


For examples of spaces that satisfy the hypotheses of Theorem \ref{thm:schwarz}, see Remarks
\ref{rmrk-Exam} and \ref{rmrk-moreExam}.

\begin{Remark}
Theorems \ref{thm:entrig}  and \ref{thm:schwarz} generalize
the results \cite[Th\'eor\`eme Principal]{Besson-Courtois-Gallot:95},  \cite[Th\'eor\`eme p.734]{Besson-Courtois-Gallot:95}, and \cite[Corollaire 1.4]{BCG-acta} of Besson--Courtois--Gallot in the case of maps for which $\ind_H(f)=\ind_\pi(f)$, e.g. when $\abs{\deg(f)}=1$.

In particular, \cref{thm:entrig} implies Mostow Rigidity in the rank one case by applying it to $X=Y$ and $f$ as the identity.
\end{Remark}

Consider the special case of Theorem \ref{thm:schwarz} when $f$ is a homotopy equivalence and the target is a hyperbolic manifold of constant curvature $-1$. In this case, we obtain:

\begin{Corollary}\label{cor:isomhyp}
Let $(X,d,\mc{H}^{N})$ be an  $\RCD(-(N-1),N)$ space without boundary and $M_\h$ a closed  hyperbolic
$N$-manifold of constant curvature $-1$.  If $X$ and $M_\h$ are homotopy equivalent, then
$$\mc{H}^{N}(X) \ge \mc{H}^{N}(M_\h).$$
Moreover, \cout{if the measure $\mf{m}$ on $X$ is in the $N$-dimensional Hausdorff
class, then} equality occurs if and only if $X$ is isometric to $M_\h$.
\end{Corollary}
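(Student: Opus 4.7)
The plan is to derive both assertions as a direct specialization of Theorem \ref{thm:schwarz} applied to $Y = M_\h$ and $f : X \to M_\h$ a homotopy equivalence provided by the hypothesis. First I would verify that $M_\h$ meets the hypotheses on $Y$ in Theorem \ref{thm:schwarz}: as a closed smooth Riemannian $N$-manifold of constant sectional curvature $-1$, it is locally $\CAT(-1)$, has Ricci tensor $-(N-1)\,g$, and its Riemannian volume coincides with $\mc{H}^N$; thus $(M_\h, d_{M_\h}, \mc{H}^N)$ is a non-collapsed $\RCD(-(N-1),N)$ space with $\pa M_\h = \emptyset$. Orientability of $M_\h$ is either assumed for the statement or ensured by passing to the orientable double cover.

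Next, I would identify the indices of $f$. Because $f$ is a homotopy equivalence, both induced maps $f_* : \pi_1(X) \to \pi_1(M_\h)$ and $f_* : H_N(X) \to H_N(M_\h)$ are isomorphisms, so $\ind_\pi(f) = \ind_H(f) = 1$. The inequality \eqref{eq:Thm2-ineq} of Theorem \ref{thm:schwarz} then immediately gives
\[
\mc{H}^N(X) \ge \ind_H(f)\, \mc{H}^N(M_\h) = \mc{H}^N(M_\h),
\]
which is the desired volume comparison.

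For the rigidity assertion, suppose $\mc{H}^N(X) = \mc{H}^N(M_\h)$. This equals $\ind_\pi(f)\,\mc{H}^N(M_\h)$ since $\ind_\pi(f) = 1$, and $\pa X = \emptyset$ by hypothesis, so the equality clause of Theorem \ref{thm:schwarz} applies: $X$ and $M_\h$ are each isometric to hyperbolic manifolds, and $f$ is homotopic to a degree-$1$ Riemannian cover. A degree-$1$ Riemannian covering between closed manifolds is a bijective local isometry, hence a global isometry, so $X$ is isometric to $M_\h$. The reverse implication of the iff is tautological.

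The corollary is essentially the homotopy-equivalence case of Theorem \ref{thm:schwarz}, and no substantive obstacle is expected: the only checks are the routine verification that $M_\h$ fits the hypotheses of Theorem \ref{thm:schwarz} (the $\RCD(-(N-1),N)$ bound, the locally $\CAT(-1)$ property, and orientability) and the observation that a homotopy equivalence forces both $\ind_\pi(f)$ and $\ind_H(f)$ to equal $1$.
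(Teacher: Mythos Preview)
Your proposal is correct and follows exactly the approach the paper takes: the corollary is stated immediately after Theorem~\ref{thm:schwarz} as the special case where $f$ is a homotopy equivalence and the target is hyperbolic of curvature $-1$, with no separate proof given. Your verification that $M_\h$ satisfies the hypotheses on $Y$ and that $\ind_\pi(f)=\ind_H(f)=1$ for a homotopy equivalence is precisely what is needed to make the specialization rigorous.
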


\begin{Remark}
This generalizes the main result of Storm \cite{Storm02} (and \cite[Theorem 8.5]{Storm07}).
\end{Remark}

One of our principal applications is the following result which can be seen as an extension of Theorem 1.3 of Bessi\`eres--Besson--Courtois--Gallot \cite{Bess-BCG} to the $\RCD$ setting, see Remark \ref{rmrk-differencehyp} for details about the explicit differences between our result and theirs, and Remark \ref{rmrk-Examples}.

\bigskip

\begin{Theorem}\label{thm:stab}
Given any integer $N\geq 3$ and constants $K\in \R$ and $D>0$,
there is an $\epsv_0=\epsv_0(N,K,D)>0$ such that the following holds. Suppose $(X,d_X,\mc{H}^{N})$ is an $\RCD(-(N-1),N)$ space with $\pa X=\emptyset$ and $\diam(X)<D$,
and $(Y,d_Y,\mc{H}^{N})$ is a compact locally $\CAT(-1)$ non-collapsed $\RCD(K,N)$ space with  $\pa Y=\emptyset$.
If $f:X\to Y$ is any continuous map with $f_*H_N(X,\Z)\neq 0$, then for any positive $\epsv<\epsv_0$ we have,
\begin{equation*}
\mc{H}^{N}(X) \leq \ind_\pi(f)\ (1+\epsv)\mc{H}^{N}(Y)
\end{equation*}
if and only if $X$ and $Y$ are homeomorphic to hyperbolic manifolds with metrics $\delta(\epsv)$ measured Gromov--Hausdorff close to the hyperbolic ones.
Moreover,
$f$ is homotopy equivalent to a covering map of degree $\ind_\pi(f)$.
\end{Theorem}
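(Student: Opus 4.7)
The plan is to argue by contradiction along the lines of Bessi\`eres--Besson--Courtois--Gallot \cite{Bess-BCG}, replacing their smooth convergence by mGH convergence in the $\RCD$ category. The reverse implication is immediate: for closed hyperbolic $X_h$ and $Y_h$ with $f$ homotopic to a cover, Mostow rigidity gives $\mc{H}^N(X_h) = \ind_\pi(f)\mc{H}^N(Y_h)$, and $\delta$-mGH-closeness transfers this to the $(1+\epsv)$ estimate. I focus on the forward direction: suppose it fails, so there are sequences $(X_i, Y_i, f_i)$ and $\epsv_i \downarrow 0$ satisfying the hypotheses and the volume bound, but with either $X_i$ or $Y_i$ not $\delta(\epsv_i)$-mGH close to a hyperbolic manifold, or $f_i$ not homotopic to a degree $\ind_\pi(f_i)$ cover.

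First, \cref{thm:schwarz} gives $\mc{H}^N(X_i) \geq \ind_H(f_i)\mc{H}^N(Y_i) \geq \ind_\pi(f_i)\mc{H}^N(Y_i)$, the latter because $(f_i)_*H_N(X_i, \Z) \neq 0$ forces $\ind_H(f_i)$ to be a positive integer multiple of $\ind_\pi(f_i)$. The integer ratio $\ind_H(f_i)/\ind_\pi(f_i)$ lies in $[1, 1+\epsv_i]$, so equals $1$ for $i$ large. Bishop--Gromov on $\RCD(-(N-1),N)$ with $\diam \leq D$ bounds $\mc{H}^N(X_i)$, whence $\mc{H}^N(Y_i)$ is bounded, and combined with the non-collapsing hypothesis on $Y_i$ this yields a uniform diameter bound on $Y_i$. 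Gromov precompactness then gives, along a subsequence, mGH limits $X_i \to X_\infty$ and $Y_i \to Y_\infty$ inheriting all the hypotheses of $X, Y$ (using mGH stability of the $\RCD$ condition, of non-collapsedness, of absence of boundary via \cite{Wang2022}, and of the local $\CAT(-1)$ condition).

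Next I extract a limit map. Because $f_i$ need not be Lipschitz, I replace it by the homotopic $\RCD$ natural map $F_{i,c}$ constructed in the proof of \cref{thm:entrig}, which is uniformly Lipschitz (with constants depending only on $N, K, D$) and has the same fundamental and homological indices. Arzel\`a--Ascoli along the mGH convergence then produces a limit $F_\infty \co X_\infty \to Y_\infty$, and the fundamental-group convergence results of Mondino--Wei and Wang \cite{Mondino-Wei, Wang2022} identify $\ind_\pi(F_\infty) = \ind_\pi(f_i)$ and $\ind_H(F_\infty) = \ind_H(f_i)$ for $i$ large. Volume continuity in the non-collapsed limit gives $\mc{H}^N(X_\infty) = \ind_\pi(F_\infty)\mc{H}^N(Y_\infty)$, so the rigidity half of \cref{thm:schwarz} forces $X_\infty, Y_\infty$ to be closed hyperbolic manifolds and $F_\infty$ to be homotopic to a Riemannian cover of degree $\ind_\pi(F_\infty)$. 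Topological stability of non-collapsed $\RCD$ spaces near smooth Riemannian manifolds then yields, for $i$ large, homeomorphisms $X_i \to X_\infty$ and $Y_i \to Y_\infty$ with metrics $o(1)$-close in mGH, through which $f_i$ becomes homotopic to the cover, contradicting the failure hypothesis.

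The main obstacle is this middle step: extracting $F_\infty$ with the correct topological invariants. This requires mGH convergence in the non-collapsed $\RCD$ setting to induce convergence of fundamental groups and their actions on universal covers---the content of \cite{Mondino-Wei, Wang2022}---and that the natural map $F_{i,c}$ be uniformly Lipschitz with constants depending only on $N, K, D$. The final homeomorphism step depends on topological stability for non-collapsed $\RCD$ limits that are smooth Riemannian manifolds, which is anticipated to be established elsewhere in the paper and provides the quantitative $\delta(\epsv)$ appearing in the statement.
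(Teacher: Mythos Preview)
Your outline matches the paper's proof closely: contradiction, pass to mGH limits of $X_i$ and $Y_i$, extract a limit of the natural maps via Arzel\`a--Ascoli, identify $\ind_\pi$ in the limit using \cite{Mondino-Wei,Wang2022}, apply the rigidity case of \cref{thm:schwarz}, and conclude by topological stability.

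Two points where the paper's execution differs from yours and where your sketch is loose. First, your diameter bound on $Y_i$: ``non-collapsed'' only means $\mf m=\mc H^N$ and does not by itself give a lower volume bound on balls, so a volume upper bound on $Y_i$ does not directly bound $\diam(Y_i)$. The paper instead uses that the natural maps $F_{s_i}$ are $(1+\eta_i)$-Lipschitz with $\eta_i\to 0$ (so $\diam(Y_i)\le(1+\eta_i)D$), and uses Margulis--Heintze on the nearby smooth metrics from \cref{lem:target_space} for the volume lower bound on $Y_i$. Second, your claim that the natural maps are ``uniformly Lipschitz with constants depending only on $N,K,D$'' needs care: the Lipschitz constant produced in \cref{lem:Lips} blows up as $s\searrow h(\bar X)$, so the uniform (in fact $(1+\eta_i)$) Lipschitz bound actually comes from the near-equality analysis of \cref{sec:FinW1infty} (the estimates behind \cref{prop:F_converge}), not from the generic construction. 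Finally, the topological stability step is not established in the paper but cited from Honda--Peng \cite{Honda-Peng:22}.
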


The very final step of the proof of Theorems \ref{thm:entrig} and \ref{thm:schwarz} relies on the following result, which is an extension of the Lipschitz volume rigidity theorem of Li--Wang \cite{LiWang-limits}.
\begin{Theorem}[Lipschitz volume Rigidity]\label{thm:LV-rigid}
	Assume  $K\in \R$ and $N\geq 3$ is an integer.
	Let $(X,d_X,\mc{H}^{N})$ and $(Y,d_Y,\mc{H}^{N})$ be $\RCD(K,N)$ spaces without boundary. Suppose there is a 1-Lipschitz map
	$f:X \rightarrow Y$ with
	$$\mc{H}^{N}(X)=\mc{H}^{N}(f(X)),$$
	then $f$ is an isometry with respect to the intrinsic metrics of $X$ and $f(X).$ In particular, if $f$ is also onto, then $X$ is isometric to $Y$.
\end{Theorem}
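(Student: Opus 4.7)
My plan is to extend the proof of Li--Wang \cite{LiWang-limits} by using the structural theory of non-collapsed $\RCD(K,N)$ spaces together with the metric area formula. The argument naturally splits into three steps: forcing essential injectivity, upgrading to infinitesimal isometry at regular points, and globalizing to a metric isometry in the intrinsic metric.

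First, I would show that $f$ is essentially injective. Since $f$ is 1-Lipschitz, the pushforward satisfies $f_* \mathcal{H}^N\rest{X}\geq \theta \cdot \mathcal{H}^N\rest{f(X)}$, where $\theta(y)=\#f^{-1}(y)$ counts the multiplicity of the fiber. The assumption $\mathcal{H}^N(X)=\mathcal{H}^N(f(X))$ forces $\theta=1$ almost everywhere, so $f$ is injective off an $\mathcal{H}^N$-null set.

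Second, I would appeal to the structure theorem for non-collapsed $\RCD(K,N)$ spaces without boundary: the top-dimensional regular set $\mathcal{R}_N$ has full $\mathcal{H}^N$-measure, and every point in it has tangent cone isometric to $\mathbb{R}^N$. At such a point $x$, the metric area formula applied to the 1-Lipschitz map $f$ forces the metric Jacobian to equal $1$ almost everywhere (otherwise, combining pointwise Jacobian $\le 1$ with essential injectivity would strictly decrease the total measure). Blowing up at $x$ then yields a 1-Lipschitz map between tangent cones with unit Jacobian, which by the Euclidean rigidity of non-expanding maps with unit Jacobian must be a linear isometry. In particular, $f(x)\in \mathcal{R}_N(Y)$ for $\mathcal{H}^N$-a.e.\ $x\in X$.

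Third, I would upgrade this infinitesimal isometry to a global one in the intrinsic metric on $f(X)$. Given $x_1,x_2\in X$, approximate the distance $d_X(x_1,x_2)$ by near-geodesics $\gamma$ passing almost entirely through regular points; the image $f\circ\gamma$ is a rectifiable curve in $f(X)$ whose length equals $L(\gamma)$ by integrating the (now unit) infinitesimal stretch. Since $f$ is 1-Lipschitz, this gives
\[
d_X(x_1,x_2)\;=\;L(\gamma)\;=\;L(f\circ\gamma)\;\geq\; d^{\mathrm{int}}_{f(X)}(f(x_1),f(x_2))\;\geq\; d_Y(f(x_1),f(x_2)),
\]
while the reverse inequality $d_X\le d^{\mathrm{int}}_{f(X)}\circ (f\times f)$ is automatic from 1-Lipschitz. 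Hence $f$ is an isometry with respect to the intrinsic metric on $f(X)$, and in the surjective case the two intrinsic metrics coincide.

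The main obstacle I expect is the second step: rigorously setting up the metric Jacobian and area formula for 1-Lipschitz maps between non-collapsed $\RCD$ spaces and identifying images of $x\in\mathcal{R}_N(X)$ with points in $\mathcal{R}_N(Y)$. The ``no boundary'' hypothesis on both $X$ and $Y$ is precisely what allows one to deduce that blowups on the target side are full Euclidean spaces (rather than half-spaces), which is essential for the Jacobian-$1$ rigidity argument in the tangent cone. Once this is in place, the globalization in the third step proceeds by standard length-space arguments.
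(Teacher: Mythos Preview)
Your Step~3 has the inequalities reversed. The displayed chain $d_X(x_1,x_2)=L(\gamma)=L(f\circ\gamma)\geq d^{\mathrm{int}}_{f(X)}(f(x_1),f(x_2))$ is the \emph{easy} direction $d^{\mathrm{int}}_{f(X)}\circ(f\times f)\le d_X$, which already follows from $1$-Lipschitzness alone (indeed only $L(f\circ\gamma)\le L(\gamma)$ is needed, not the equality you work to establish). What you then call the ``reverse inequality $d_X\le d^{\mathrm{int}}_{f(X)}\circ(f\times f)$ \dots automatic from $1$-Lipschitz'' is precisely the nontrivial content of the theorem, and it is \emph{not} a consequence of $1$-Lipschitzness. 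As written, your argument proves the trivial inequality twice and leaves the substantive one unaddressed. To obtain the hard direction you would have to control curves in $f(X)$ from the $X$ side (e.g.\ by lifting through a continuous inverse), which requires global injectivity and an infinitesimal-isometry statement holding $\mc{H}^1$-a.e.\ along an arbitrary curve; an $\mc{H}^N$-a.e.\ Jacobian-$1$ statement alone does not give this.

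The paper's proof, following Li--Wang, supplies exactly the missing mechanism and differs from your outline. Its ingredients are: (i) volume convergence and the almost maximal volume theorem for non-collapsed $\RCD(K,N)$ spaces (De~Philippis--Gigli); (ii) the bound $\dim_H(\mc{S})\le N-2$ for non-collapsed spaces without boundary (Kapovitch--Mondino); and (iii) a Cheeger--Colding ``shadow'' estimate, recorded as the Dimension Comparison \cref{cor:dimcomp}, saying that any set $E$ meeting every geodesic from a fixed point to a positive-measure set $\Omega$ has $\dim_H(E)\ge N-1$. The hard inequality is then obtained by contradiction: a strict contraction by $f$ would force such an $E$ to lie inside the singular set, yielding $\dim_H(\mc{S})\ge N-1$ against (ii). You invoke neither (ii) nor (iii), and your remark that the no-boundary hypothesis serves to make target blowups full Euclidean spaces misses its actual role here: it is what pushes the singular set to codimension two so that the dimension comparison (iii) bites.
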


Another application of our methods recovers and gives a potential extension of a result by Besson--Courtois--Gallot \cite[Th\'eor\`eme 9.6]{Besson-Courtois-Gallot:95} about uniqueness of Einstein metrics on hyperbolic $4$-manifolds to certain Einstein $4$-orbifolds, (see \cref{cor:Einstein} below).

\subsection{The barycenter technique and organization of the paper}

A method for extending conformal homeomorphisms of the circle to the unit disc was introduced by Douady--Earle \cite{DE}.
Their ideas are at the root of the barycenter technique further developed by Besson--Courtois--Gallot \cite{Besson-Courtois-Gallot:95, BCG-rend, BCG-acta}, used to solve a conjecture by Gromov about compact locally symmetric spaces. This family of ideas consists of a way to smooth maps within a homotopy class with certain nice properties akin to harmonic maps.
Improvements of these techniques to work on finite volume manifolds were achieved by Boland--Connell--Souto \cite{BCS}.
Further work by Storm removed the bounded geometry hypothesis \cite{St1}, he also expanded the possible spaces where this approach can be used to include Alexandrov spaces \cite{Storm02}, and to certain other singular spaces \cite{Storm07}.
A recent variation of this theme has also been successfully applied to manifolds modelled on products of copies of the hyperbolic plane by Merlin \cite{Mer}.
Analogous results were obtained for manifolds with Ricci curvature bounded below, as well as related stability resultsm by Bessi\`eres--Besson--Courtois--Gallot \cite{Bess-BCG}.
Other formulations of closely related maps arising from the barycenter construction, and their uses, were described by the first named author and others e.g. \cite{Con,ConSS19,Storm07,BCG-Samb,Lafont-Schmidt}.
The maps arising from the barycenter construction are often referred to as  ``natural maps.''
Recently, Song studied a version of the Plateau problem for group homology adapting Besson--Courtois--Gallot's ideas to work on metric currents in an infinite-dimensional Hilbert--Riemannian manifold \cite{Song}.

Our contributions here increase the scope of applicability of the barycenter method to the more general setting of $\RCD$-spaces. As with most developments of the barycenter method, our work relies on bounding the Jacobian of the resulting natural maps using the entropy and the dimension.
We achieve this in several steps as our metric setting is quite different than the Riemannian one.
The proof of the rigidity statements in \cref{thm:entrig} and \cref{thm:schwarz} after achieving a 1-Lipschitz map are also necessarily completely different.
The proof we present below is different from the proof of Besson--Courtois--Gallot's celebrated main result \cite[TH\'EOR\`EME PRINCIPAL, pg.734]{Besson-Courtois-Gallot:95}, which relies on the spherical volume.
In their more recent work, Bessi\`eres--Besson--Courtois--Gallot  \cite{Bess-BCG} used the barycenter method on limits of sequences of manifolds with Ricci curvature bounded below.
From a bird's eye view, our approach to proving \cref{thm:stab} is somewhat similar to the general strategy of proof of \cite[Theorem 1.3]{Bess-BCG}.
Nevertheless, our proof differs in several key points from their original arguments.
Moreover, the proof of \cref{thm:schwarz} relies on some of the same technology used to prove \cref{thm:entrig} for $\RCD$-spaces.
All of which requires the following innovations:
\begin{enumerate}
     \item  Important aspects of manifold theory are lacking for $\RCD$ spaces. One of them is Brower's degreee theory. We replace the standard notion of degree of a smooth map with our fundamental ($\ind_{\pi}$) and homological ($\ind_{H}$) indices, developed in detail in \cref{sec:degree}.
     \item A crucial step in the barycenter method is the ability to effectively bound the norm of the Jacobian of a map. We define the Jacobian using the $\RCD$ structure of $X$ in \cref{subsec:Jacobian}, and prove the key estimate we need in \cref{prop:Jac_est}.
     \item In \cref{sec:Proofs}, we exploit the Wasserstein distance to prove the natural maps $F_s$ are Lipschitz in \cref{lem:Lips}.

     \item  For the rigidity results (in the cases of equality), we rely on an extended version of the result of Li--Wang (see \cref{thm:LV-rigid}).
\end{enumerate}

We will now explain the organization of the paper.
In Section \ref{sec:Prelims} we  describe the tools from the $\RCD$-spaces theory that we will need.
In Section \ref{sec:degree} we develop a homotopy invariant of maps from these metric spaces to manifolds which plays the role of a weak notion of absolute degree.
In Section \ref{sec:Proofs} we extend the barycenter machinery to our context and establish the necessary estimates needed to prove our results using these tools.
Note that while we do not need to use the second-order theory of $\RCD$ spaces coming from heat kernel estimates as we did in our previous work on maximal entropy rigidity \cite{stormmod}, we do need to deal with the inherent lack of smoothness of these spaces which must be controlled under Lipschitz assumptions alone.
 In Section \ref{subsec:Fs-Lip} we establish the Lipschitz continuity of the natural map by utilizing Sturm's results (see \cref{lem:barLip}).

 Then,
in Section \ref{sec:inequalities_proof},  we prove the inequality statements in Theorems \ref{thm:entrig} and \ref{thm:schwarz}.
Section \ref{sec:vol-rigid} contains the proof of our augmented version of the volume rigidity theorem of Li--Wang.
This is used in Section \ref{sec:equality_proofS}
to establish the equality (rigidity) statements in Theorems \ref{thm:entrig} and \ref{thm:schwarz}.

The stability result of \cref{thm:stab} is proved in \cref{sec:stability_proof}.
Finally, sections \ref{sec:proof4.8} and \ref{sec:FinW1infty} contain the proofs of two key   results (\cref{prop:Jac_est} and \cref{prop:F_converge}) that we need for the proof of Theorems \ref{thm:entrig} and \ref{thm:schwarz}.

{\bf{Acknowledgments:}} The authors thank The University of California Institute for Mexico and the United States (UC MEXUS) for support for this project through the grant CN-16-43. CC was partially supported by a grant from the Simons Foundation \#210442. XD was partially supported by Simons Foundation.
GW was partially supported by NSF DMS grant 2104704 and 2403557. JNZ is supported by PAPIIT-UNAM project  IN101322 and IA103925.
 GW would like to thank Xingyu Zhu for helpful conversation and Shouhei Honda for bringing \cite[Proposition 3.6]{Honda-Sire} to our attention.

 A final draft of this manuscript is based upon work supported by the National Science Foundation under Grant No. DMS-1928930, while XD, RP and GW were in residence at the Mathematical Sciences Research Institute in Berkeley, California, during the Fall semester of 2024.

PSS and CC warmly thank the Max Planck Institute for Mathematics in Bonn for providing an excellent working environment and support.
PPS thanks the Geometric Intelligence Laboratory in UC Santa Barbara, for the great atmosphere and support during the final writing stages, and the Casa Matem\'atica Oaxaca for hosting a wonderful workshop in 2022.

\section{Preliminaries}\label{sec:Prelims}

In this section we review several concepts required for the arguments in the proofs of our results.
We begin by recalling the basic notions of weak upper gradient and Sobolev functions on metric measure spaces.
We assume the reader to be familiar with the basic notions of $\RCD$ spaces
and we only present the relevant elements of the theory that we require in the rest of the article, such as the stratification into regular and singular sets, the corresponding chart decomposition due to Mondino--Naber, the relevant results on non-collapsed spaces, and the definition of the boundary.
We then proceed to recall the definition of the Jacobian in this general context, using the coarea formula for metric measure spaces due to Ambrosio--Kirchheim \cite{Ambrosio-Kirchheim}.
(We use Reichel's formulation \cite{Reichel09} and see also \cite[Theorem 1.4]{Ka}.)

\subsection{$\RCD$ spaces and their boundary}\label{sec:weakgrad}
Let $(X,d,\mf{m})$ be a complete separable metric space with a Radon measure $\mf{m}$. We say that a curve $\ga\in C([0,1],X)$ is \textit{absolutely continuous} if there exists a map $f\in L^1([0,1])$ satisfying
\[
d(\gamma_t,\gamma_s)\leq \int_s^t f(r)\, dr
\]
for every $t,s\in [0,1]$ with $s<t$. The \textit{metric speed} of an absolutely continuous curve $\gamma$ is the limit
\[
\abs{\dot\ga_t}:=\limsup_{h\to 0} \frac{d(\ga_{t+h},\ga_t)}{h},
\]
which exists for a.e. $t$. Moreover, the map $\abs{\dot\ga_t}$ is integrable and it is the minimal map that can be chosen as $f$ in the definition of absolutely continuous curve.

Let $\mc{P}(X)$ denote the space of Borel probability measures on $X$, and let $e_t:C([0,1],X)\to X$ be the {\it evaluation map at time} $t$ on curves given by $e_t(\ga)=\ga_t$.
A {\em test plan} is a measure $\pi\in \mc{P}(C([0,1],X))$ such that
$$(e_t)_*\pi\leq C(\pi) \mf{m}$$ for all $t\in[0,1]$ and some constant $C(\pi)>0$, and
\[
\int\mkern-13mu\int_0^1 \abs{\dot{\gamma_t}}^2 \,dt\, d\pi(\gamma) < \infty.
\]

Recall that a {\it weak upper gradient} for a function $f$
is a non-negative function  $G\in L^2(X,\mf{m})$ such that for all test plans $\pi\in \mc{P}(C([0,1],X))$, we have
\[
\int \abs{f(\gamma_1)-f(\gamma_0)}d\pi(\gamma)\leq \int\mkern-13mu\int_0^1 G(\gamma_t)\abs{\dot{\gamma_t}} \, dt\, d\pi(\gamma).
\]
The set of weak upper gradients of $f$ is a convex and closed subset of $L^2(X,\mf{m})$ (see \cite[Proposition 2.1.11]{GigliPasqualettoBook}).
As $L^2(X,\mf{m})$ is a Hilbert space, it follows that there exists a unique pointwise minimal weak upper gradient of $f$ which is denoted by $\abs{\grad f}$.
The {\it Sobolev $(1,2)$-space of $X$}, $W^{1,2}(X,d,\mf{m})$ is the space of elements of $L^2(X,\mf{m})$ for which
$\abs{\grad f}$ exists and such that $\norm{f}_{1,2}:= \norm{f}_2+\norm{\abs{\grad f}}_2$ is bounded.

Let us now recall that $(X,d,\mf{m})$ is an {\it $\RCD(K,N)$ space} for given $K\in\R$ and $N\in[1,\infty]$ if it is an {\it infinitesimally Hilbertian space}, that is $W^{1,2}(X,d,\mf{m})$ is a Hilbert space, and $X$ satisfies the {\it curvature-dimension condition $\mathsf{CD}(K,N)$} (see for example   \cite{GigliPasqualettoBook, Lott-Villani09, Sturm06I, Sturm-MM2} for an account of the basic theory).
For the rest of the section, we assume that $(X,d,\mf{m})$ is an $\RCD(K,N)$-space.

Let $x\in {\rm supp}(\mf{m})$, and $r\in (0,1)$. Consider the rescaled and normalized pointed metric measure space $(X,r^{-1}d,\mf{m}_{r}^{x}, x)$, with:
\[
\mf{m}_{r}^{x}:=\left( \int\limits_{B(x,r)} 1- \frac{1}{r} d( \cdot , x)\, \mf{m} \right)^{-1}\, \mf{m}
\]

\begin{Definition}
Let $(X,d,\mf{m})$ be a metric measure space and $x\in {\rm supp}(\mf{m})$. A pointed metric measure space $(Y, d_{Y},\mf{m}_{Y}, y)$ is called a (metric measure) tangent space to $(X,d,\mf{m})$ at $x$ if there exists a sequence of radii $r_{i} \searrow 0$, so that
\[
(X,r_i^{-1}d,\mf{m}_{r_i}^{x}, x)\to (Y, d_{Y},\mf{m}_{Y}, y),
\]
as $i\to \infty$ in the pointed measured Gromov-Hausdorff topology.
\end{Definition}

The collection of all metric measure tangent spaces at a point $x\in X$ is denoted by ${\rm Tan}(X,d,\mf{m},x)$.
The {\it $k$-dimensional regular set} $\mc{R}^k$ is the set of points $x\in X$ such that ${\rm Tan}(X,d,\mf{m},x)$ consists of a single space, isomorphic (that is, isometric where the isometry is measure-preserving) to the $k$-dimensional Euclidean space $(\R^{k}, d_{Euc},\omega_{k}^{-1}\mathcal{L}^k,0)$.
Here, $d_{Euc}$ is the Euclidean distance, $\mathcal{L}^k$ is the $k$-dimensional Lebesgue measure, and $\omega_{k}$ is the volume of the unit ball in $\R^k$.
It follows from the Bishop--Gromov volume comparison theorem that $\mc{R}^k=\emptyset$ for $k>N$.

A structural result for $\RCD$-spaces  obtained by Mondino--Naber shows that $X$ is stratified by the $k$-th strata $\mc{R}^{k}$ \cite{MN}. Contributions by De Philipis--Gigli \cite{DePhilippisGigli}, Gigli--Mondino--Rajala \cite{GigliMondinoRajala}, Gigli--Pasqualetto \cite{GigliPasqualetto} and Kapovitch--Mondino \cite{Kapovitch-Mondino}, strengthened this decomposition showing that each $\mc{R}^k$ is $k$-rectifiable, and that the measure is mutually absolutely continuous to the $k$-dimensional Hausdorff measure $\mc{H}^k$ (see also Theorem 1.18 of Ambrosio--Honda--Tewodrose \cite{AmbrosioHondaTewodrose}).
Moreover, it has been shown by Bru\`e--Semola that the dimension of an $\RCD(K,N)$ space is locally constant \cite[Theorem 1.11,1.12]{brue-semola:18}. These results are summarized in the following.

\begin{Theorem}[\cite{MN, DePhilippisGigli, GigliMondinoRajala, Kapovitch-Mondino, AmbrosioHondaTewodrose, brue-semola:18}]\label{thm:MondinoNaberImproved}

Let $(X,d,\mf{m})$ be an $\RCD(K,N)$-space for some $K\in \R$ and $N\in (1,\infty)$. Then there is exactly one integer $k\in \set{1,\dots,\floor{N}}$, called the essential dimension of $X$, and a decomposition as a disjoint union $X=Z\bigcup \mc{R}^k$ such that:
\begin{enumerate}
\item $\mf{m}(Z)=0$ and $\mf{m}\mres \mc{R}^k$ is mutually absolutely continuous with $\mc{H}^k \mres \mc{R}^k$, and every point of $\mc{R}^k$ is an $\mc{H}^k$-density point,
\item  \cite[Mondino--Naber, Theorem 1.3]{MN} for any $\eps>0$ there exists an $\mf{m}$-null set $Z_\eps$ and countably many measurable sets $U^\eps_i\subset X$ such that $\mc{R}^k\subset Z_\eps \cup \bigcup_{i\in \N} U^\eps_i$ and each $U_i^\eps$ is $(1+\eps)$-biLipschitz to a subset of $\R^k$.

\end{enumerate}
\end{Theorem}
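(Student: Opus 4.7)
The statement is a compilation of deep structure results from the literature, so the plan is not to reprove anything from scratch but to explain carefully how the cited results fit together to yield the precise formulation above. First I would start from the Mondino--Naber stratification theorem \cite{MN}, which produces the disjoint decomposition $X = Z_0 \cup \bigsqcup_{k=1}^{\lfloor N\rfloor} \mc{R}^k$ with $\mf{m}(Z_0)=0$, and provides the biLipschitz chart structure claimed in item~(2): for each fixed $k$ and each $\eps>0$, almost every point of $\mc{R}^k$ admits a neighborhood within $\mc{R}^k$ that is $(1+\eps)$-biLipschitz to a subset of $\R^k$, up to discarding a further $\mf{m}$-null set. This already gives the chart decomposition $\mc{R}^k \subset Z_\eps \cup \bigcup_i U_i^\eps$.

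Next, I would invoke the constancy of essential dimension theorem of Bru\`e--Semola \cite{brue-semola:18}, which asserts that the function $x\mapsto \dim x$ (where $\dim x$ is the unique integer such that $x\in\mc{R}^{\dim x}$ up to $\mf{m}$-null sets) is $\mf{m}$-a.e.\ constant on each connected component of $\supp(\mf{m})$; since $\RCD(K,N)$ spaces are connected (as length spaces satisfying a $\mathsf{CD}$ condition with finite reference measure locally), this identifies a single integer $k\in\{1,\dots,\lfloor N\rfloor\}$ such that $\mf{m}(\mc{R}^j)=0$ for every $j\neq k$. Absorbing all these null contributions into $Z := Z_0 \cup \bigsqcup_{j\neq k} \mc{R}^j$ produces the two-piece decomposition $X = Z \cup \mc{R}^k$ with $\mf{m}(Z)=0$.

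For the measure-theoretic comparison in item~(1), I would combine the weak Ahlfors regularity and the representation results of De Philippis--Gigli \cite{DePhilippisGigli}, Gigli--Mondino--Rajala \cite{GigliMondinoRajala}, Gigli--Pasqualetto \cite{GigliPasqualetto}, Kapovitch--Mondino \cite{Kapovitch-Mondino}, and Ambrosio--Honda--Tewodrose \cite{AmbrosioHondaTewodrose}. Together these show that on the $k$-dimensional regular set one has a representation $\mf{m}\mres\mc{R}^k = \theta\,\mc{H}^k\mres\mc{R}^k$ with $\theta$ a Borel density that is positive and finite $\mc{H}^k$-a.e.\ on $\mc{R}^k$. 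This is exactly the mutual absolute continuity claim. The fact that every point of $\mc{R}^k$ (after possibly shrinking it by another $\mf{m}$-null set, which can be absorbed into $Z$) is an $\mc{H}^k$-density point then follows from the $k$-rectifiability of $\mc{R}^k$ together with the standard Lebesgue differentiation / Vitali-type density theorem available on doubling metric measure spaces, since $\RCD(K,N)$ spaces are locally doubling by Bishop--Gromov.

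The main obstacle in assembling the statement this cleanly is the dimensional constancy step: without Bru\`e--Semola, the Mondino--Naber theorem only guarantees $\mf{m}$-a.e.\ stratification by $k$, and one could conceivably have several strata simultaneously carrying positive measure. Everything else is a matter of carefully citing and glueing the density/rectifiability results and absorbing successive $\mf{m}$-null sets into a single set $Z$. Once this bookkeeping is done, items~(1) and~(2) follow directly.
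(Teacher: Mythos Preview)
Your proposal is correct and matches the paper's treatment: the theorem is stated as a compilation of known results with no formal proof, and the paper's preceding paragraph attributes the pieces exactly as you do (Mondino--Naber for the stratification and charts, De Philippis--Gigli, Gigli--Mondino--Rajala, Gigli--Pasqualetto, Kapovitch--Mondino, and Ambrosio--Honda--Tewodrose for rectifiability and mutual absolute continuity, and Bru\`e--Semola for constancy of the essential dimension). Your write-up is in fact more detailed than what the paper provides.
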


If the essential dimension
of $X$ equals $k$, then the {\em singular set} $\mc{S}$ of $X$ consists of those points admitting a tangent cone that is {\bf not} isometric to $\R^k$. Hence, following the notation of the previous theorem, $\mc{S}=Z$, and the complementary {\em regular set} $\mc{R}$ satisfies $\mc{R}=\mc{S}^{c} = \mathcal{R}^k$.
The $\eps$-regular set $\mc{R}_\eps$ consists of points admitting a ball of radius $\eps$ which is $\eps$-close in the Gromov--Hausdorff topology to a ball in $\R^k$.
Therefore, $\mc{R}_\eps$ contains the union $\bigcup_{i\in \N} U^\eps_i$ and, in particular, has full measure interior, even though $\mc{R}$ may not.

\begin{Remark}\label{rmrk-HausMeasSingSet}
As a consequence of the above theorem we always have $\mc{H}^{N}<<\mf{m}$ (i.e. $\mc{H}^{N}(Z)=0$ and $\mc{H}^{N}(Z^\eps)=0$.) Note that we may have $k<\floor{N}$. For example, for any $N>1$, $(X,d,\mf{m})=((0,\infty),\abs{\cdot},\sinh^{N-1}(x)dx)$ is an $\RCD(-(N-1),N)$ space with $k=1$.
\end{Remark}

The singular set $\mathcal{S}$ is naturally stratified
\[
\mathcal{S}^0\subset \mathcal{S}^1\subset \ldots \subset \mathcal{S}^{N-1}=\mathcal{S}.
\]
Here, $\mathcal{S}^k$ is the set of points $x\in X$ for which no tangent cone in ${\rm Tan}(X,d,\mf{m},x)$ splits off a Euclidean space $\R^{k+1}$.  The {\it boundary $\pa X$} of $X$ can then be defined in terms of stratified singular sets as $\pa X=\mc{S}^{N-1}\setminus\mc{S}^{N-2}$ (see \cite{DePhilippisGigli}).

We say that an $\RCD(K,N)$ space $(X,d,\mf{m})$ is {\em non-collapsed} if $\mf{m}=\mc{H}^{N}$, i.e. $\mf{m}$ is the $N$-dimensional Hausdorff measure. In this case, $N\in\N$ and the essential dimension of $X$ equals $N$.

By a result of Kapovitch and Mondino  \cite[Theorem 1.7]{Kapovitch-Mondino}, if $X$ is non-collapsed and $\pa X=\emptyset$ then the Hausdorff dimension of its entire singular set $\mc{S}$ is at most $N-2$.

To proceed, we now recall the standard definition of a cone of a metric measure space. First, given a metric space $Z$, the {\it cone} $C(Z)$ over $Z$ is defined as the completion of $\R^{+}\times Z$ equipped with the metric
\[
d^2_{C}((r_1, z_1), (r_2, z_2)) = \left\{
	\begin{array}{ll}
		r_1^2 + r_2^2 - 2 r_1 r_2 \cos (d_{Z}(z_1,z_2))  & \mbox{if } d_{Z}(z_1,z_2) \leq \pi \\
		(r_1 + r_2)^2 & \mbox{if } d_{Z}(z_1,z_2) \geq \pi.
	\end{array}
\right.
\]
If $(Z, d_{Z}, \mf{m}_{Z})$ is a metric measure space, then the cone $C(Z)$ admits the following {\it cone measures}
\[
\mf{m}_{C,N}=t^{N-1}\otimes \mf{m}_{Z}.
\]
Here $N>1$ is a real parameter.

The following lemma due to Kapovitch--Mondino builds upon the work of De Philippis--Gigli \cite{DePhilippisGigli-volume cone, DePhilippisGigli} and Ketterer \cite{Ketterer}.

\begin{Lemma}\cite[Lemma 4.1]{Kapovitch-Mondino}
\label{lem:ncRCD-tangents}
Let $(X,d, \mc{H}^{N})$ be a non-collapsed $\RCD(K,N)$ space.
Then, for every $x$ in $X$, every $Y\in {\rm Tan}(X,d,\mc{H}^{N},x)$ is a metric measure cone over a non-collapsed $\RCD(N-2,N-1)$ space $Z$, i.e. $Y=C(Z)$.
\end{Lemma}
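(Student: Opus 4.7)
The plan is to combine three key structural facts about $\RCD$ spaces: (i) the stability/scaling behavior of the $\RCD$ condition under pointed measured Gromov--Hausdorff convergence, (ii) De~Philippis--Gigli's volume-cone-implies-metric-cone theorem in the non-collapsed setting, and (iii) Ketterer's characterization of $\RCD$ cones.

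First I would observe that rescaling the metric by $r^{-1}$ turns an $\RCD(K,N)$ space into an $\RCD(r^2 K, N)$ space, so as $r\to 0$ the effective lower Ricci bound tends to $0$. By the stability of the $\RCD$ condition under pointed measured Gromov--Hausdorff convergence, every tangent $(Y,d_Y,\mf{m}_Y,y)\in {\rm Tan}(X,d,\mc{H}^N,x)$ is an $\RCD(0,N)$ space. Next, since $(X,d,\mc{H}^N)$ is non-collapsed, the normalizing factors defining $\mf{m}_r^x$ are (up to bounded multiplicative constants) the inverses of the rescaled Hausdorff measures of balls of bounded radius; using the continuity of the Hausdorff measure under non-collapsed mGH convergence of $\RCD(K,N)$ spaces (De~Philippis--Gigli), one concludes that $\mf{m}_Y$ is proportional to $\mc{H}^N_Y$, so $(Y,d_Y,\mc{H}^N_Y)$ is a \emph{non-collapsed} $\RCD(0,N)$ space with base point $y$.

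Second, I would argue that the base point $y$ realizes equality in Bishop--Gromov. By definition, the rescaled measures $\mf{m}_{r_i}^x$ converge to $\mf{m}_Y$, and a standard computation shows that in the non-collapsed setting the volume ratios $\mc{H}^N(B(y,s))/s^N$ must be constant in $s>0$. This is the volume cone condition at $y$. Invoking the main theorem of De~Philippis--Gigli on the equivalence of volume cones and metric measure cones for non-collapsed $\RCD(0,N)$ spaces, one deduces that $Y$ is isomorphic to a metric measure cone $C(Z)$ over some metric measure space $(Z,d_Z,\mf{m}_Z)$ with cone measure $\mf{m}_{C,N}=t^{N-1}\otimes \mf{m}_Z$.

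Third, once $Y=C(Z)$ is a metric measure cone that is itself non-collapsed $\RCD(0,N)$, I would apply Ketterer's characterization: the cone $C(Z)$ with measure $t^{N-1}\otimes \mf{m}_Z$ is $\RCD(0,N)$ (with $\diam Z \leq \pi$) if and only if $(Z,d_Z,\mf{m}_Z)$ is an $\RCD(N-2,N-1)$ space. Combining this with the identification $\mf{m}_{C,N}=\mc{H}^N_{C(Z)}$ forces $\mf{m}_Z=\mc{H}^{N-1}_Z$, so $Z$ is non-collapsed $\RCD(N-2,N-1)$, as desired.

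The main technical obstacle is step two: ensuring that rescaling indeed produces the Hausdorff measure in the limit and that one can legitimately read off Bishop--Gromov rigidity at the base point of the tangent from the convergence $\mf{m}_{r_i}^x\to \mf{m}_Y$. This hinges on the continuity of $\mc{H}^N$ along non-collapsing mGH sequences of $\RCD(K,N)$ spaces (a nontrivial result of De~Philippis--Gigli) and on carefully tracking normalization constants; once this is in place, the remaining steps are direct applications of the cited structure theorems.
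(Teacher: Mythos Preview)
Your proposal is correct and follows precisely the strategy that the paper attributes to Kapovitch--Mondino: the paper does not give its own proof of this lemma but cites it as \cite[Lemma 4.1]{Kapovitch-Mondino}, noting that it ``builds upon the work of De Philippis--Gigli \cite{DePhilippisGigli-volume cone, DePhilippisGigli} and Ketterer \cite{Ketterer}.'' Your three-step outline (stability under blow-up giving non-collapsed $\RCD(0,N)$ tangents, volume-cone-implies-metric-cone, and Ketterer's cone characterization) is exactly this route.
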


We are now ready to include the following, also due to Kapovitch and Mondino:

\begin{Definition}\cite[Definition 4.2]{Kapovitch-Mondino}
Given a non-collapsed $\RCD(K,N)$ space, $K\in \R, N\in \N$, define the $\RCD$-boundary of $X$ as:
\[
\pa X\, :=\, \{ x\in X\, :\, \mbox{\rm there is}\,\, Y\in {\rm Tan}(X,d,x)\,\, \mbox{\rm such that}\,\, Y=C(Z)\,\, \mbox{\rm and}\,\, \pa Z \neq \emptyset \}.
\]
\end{Definition}

Observe that this notion is well defined, by recursively considering increasing dimensions using Lemma \ref{lem:ncRCD-tangents} above.  There is also a notion of a {\em reduced boundary}, which was shown to be a subset of $\pa X$ (Lemma 4.5 of \cite{Kapovitch-Mondino}). Recently, the reduced boundary, and some other notions of boundary, such as the one introduced after Remark \ref{rmrk-HausMeasSingSet}, were shown by Bru\`e, Naber, and Semola  \cite[Theorem 6.6]{BrueNaberSemola} to be equivalent in the case that the $\pa X$ vanishes for any non-collapsed $\RCD(K,N)$ space.

\subsection{The Coarea formula and the definition of the Jacobian matrix}\label{subsec:Jacobian}

For $N\in\N$, consider a Lipschitz map $u:\R^N\to Y$ to a metric measure space $(Y,d_Y, \mathfrak{m}_{\mathrm{Y}})$.
Kirchheim \cite{Kirchheim94} defined a seminorm on $\R^N$, called the {\em metric differential} $\op{md}(u,x)$, by
\[
\op{md}(u,x)(v):=\lim_{t\searrow 0} \frac{d_Y(u(x+tv),u(x))}{t},
\]
which exists for $\mc{H}^{N}$-a.e. point $x\in \R^N$.

Following Definition  3.25 of Reichel \cite{Reichel09},  the \emph{coarea factor} for $\op{md}(u,x)$
is defined to be
 \[
 C_N(\op{md}(u,x))=\frac{\mc{H}^{N}_{\op{md} (u,x)}(A)}{\mc{H}^{N}(A)},
 \]
 if the kernel of $\op{md}(u,x)$ is equal to $\{0\}$, otherwise $C_N(\op{md}(u,x))=0$. Here $\mc{H}^{N}_{\op{md}(u,x)}$ is the Hausdorff $N$-dimensional measure on $\R^N$ with respect to the semi-norm $\op{md}(u,x)$,  $\mc{H}^{N}$ is the standard Hausdorff measure on $\R^N$, and $A$ is any $\mc{H}^{N}$-measurable subset of positive measure. This definition is independent of the choice of $A$ (see the discussion after Definition 3.25 of \cite{Reichel09}).

 \begin{Remark}
 In \cite{Reichel09}, a more general coarea factor $C_{m}(f,x)$ is defined which is used in the statement and proof of a more general coarea formula than the one that appears below in Theorem \ref{thm:coareaReichel}.  This coarea factor agrees with the one above in the case we use, namely $m=N$.

 We also note the comment after equation (3.2) of \cite{Reichel09} that in our setting ($m=N$), $C_N(\op{md}(u,x))$ agrees with the Jacobian factor
 defined by Kirchheim \cite{Kirchheim94}.
 \end{Remark}

 Let $X$ be an $\mc{H}^{N}$-rectifiable set and $Y$ an $\mc{H}^{N}$-$\sigma$ finite metric space, for a Lipschitz map $f:X\to Y$ we define the {\em coarea factor} of $f$ at $x \in \alpha_i(U_i)$ to be
 \[
C_N(f,x):=\frac{C_N(\op{md}(f\of \alpha_i,{\alpha_i^{-1}(x)})}{C_N(\op{md} (\alpha_i),{\alpha_i^{-1}(x)})},
 \]
 where
  {$\{(U_i, \alpha_i)\}_{i \in \N}$ is a disjoint bilipschitz parametrization of $X$ as in Lemma 5.2 in  \cite{Reichel09}}. Reichel's Proposition 5.4 \cite{Reichel09} shows that $C_N(f,x)$ is a.e. independent of the parametrization. That is, $C_N(f,x)$ might be different at some points, but the points in which that happens has zero measure. (We remark that Reichel defines coarea factors $C_m$ for $m\leq N$, for use in a general coarea formula. However, we only use the case $m=N$ which simplifies to the above expression.)

Ambrosio and Kirchheim proved area (Theorem 8.2 of \cite{Ambrosio-Kirchheim}) and coarea (Theorem 9.4 of \cite{Ambrosio-Kirchheim}) formulas for countably $\mc{H}^N$ rectifiable spaces (see also \cite[Theorem 1.4]{Ka}).

While these apply in our setting, they have been generalized in a more directly applicable form in the coarea formula given by Reichel:

\begin{Theorem}[Reichel, Theorem 5.5  \cite{Reichel09}, $m=N$ case]
\label{thm:coareaReichel}
Let $X$ be an $\mathcal{H}^{N}$-rectifiable metric space.
 Suppose $N\geq 1$ and suppose $Y$ is an $\mathcal{H}^{N}-\sigma$-finite metric space. Suppose $f:X \to Y$ is a Lipschitz map and $E\subset X$ is an $\mathcal{H}^{N}$-measurable subset. Then
	\[
	\int_{E}C_{N}(f,x)d\mathcal{H}^{N}(x) = \int_{Y} \mathcal{H}^{0}(f^{-1}(y) \cap E)d \mathcal{H}^{N}(y).
	\]

	Suppose $g:X \to {\bf R}$ is an $\mathcal{H}^{N}$-integrable function. Then
	\begin{equation}
	\int_{E}g(x)C_{N}(f,x)d\mathcal{H}^{N}(x) =  \int_{Y}\int_{f^{-1}(y)\cap E} g(x) d \mathcal{H}^{0}(x)d \mathcal{H}^{N}(y).  \label{coarea-g}
	\end{equation}
\end{Theorem}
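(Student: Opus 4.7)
The plan is to reduce the statement to the Euclidean Lipschitz setting by exploiting the bilipschitz parametrization built into $\mathcal{H}^{N}$-rectifiability, and then to invoke Kirchheim's classical $N$-dimensional area formula for Lipschitz maps from a subset of $\mathbb{R}^{N}$ into a metric space. Concretely, fix a countable disjoint bilipschitz parametrization $\{(U_i,\alpha_i)\}_{i\in\mathbb{N}}$ of $X$, with $U_i\subset\mathbb{R}^N$, and decompose the test set as $E=\bigsqcup_i E_i$ with $E_i:=E\cap\alpha_i(U_i)$, up to an $\mathcal{H}^{N}$-null set. Both sides of the desired identity are countably additive in $E$, so it suffices to verify the formula for each $E_i$ separately.

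For fixed $i$, the composition $g:=f\circ\alpha_i\colon U_i\to Y$ is Lipschitz, so by Kirchheim's differentiability theorem the metric differential $\op{md}(g,u)$ exists at Lebesgue-a.e.\ $u\in U_i$, and Kirchheim's area formula applied to $g$ yields
\[
\int_{\alpha_i^{-1}(E_i)} C_N(\op{md}(g,u))\,du \;=\; \int_Y \mathcal{H}^{0}\bigl(g^{-1}(y)\cap\alpha_i^{-1}(E_i)\bigr)\,d\mathcal{H}^{N}(y).
\]
Because $\alpha_i$ is a bijection onto its image, $g^{-1}(y)\cap\alpha_i^{-1}(E_i)=\alpha_i^{-1}(f^{-1}(y)\cap E_i)$, and $\mathcal{H}^{0}$ is invariant under bijections, so the right-hand side equals $\int_Y \mathcal{H}^{0}(f^{-1}(y)\cap E_i)\,d\mathcal{H}^{N}(y)$, which is exactly what we want on the right of the coarea identity.

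For the left-hand side I would exploit the multiplicativity $C_N(\op{md}(g,u))=C_N(f,\alpha_i(u))\cdot C_N(\op{md}(\alpha_i,u))$ which is built into Reichel's very definition of $C_N(f,x)$; this identity holds Lebesgue-a.e.\ since $\alpha_i$ is bilipschitz, so $\op{md}(\alpha_i,u)$ is a genuine norm and $C_N(\op{md}(\alpha_i,u))>0$ almost everywhere. Feeding this into the previous display and then applying the area formula for the bilipschitz map $\alpha_i$ itself with weight $x\mapsto C_N(f,x)$ gives
\[
\int_{\alpha_i^{-1}(E_i)} C_N(f,\alpha_i(u))\,C_N(\op{md}(\alpha_i,u))\,du \;=\; \int_{E_i} C_N(f,x)\,d\mathcal{H}^{N}(x),
\]
which establishes the unweighted identity. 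The weighted statement \eqref{coarea-g} then follows by the standard approximation: it holds for characteristic functions (which is the unweighted case applied to $E\cap\{g\neq 0\}$), extends linearly to nonnegative simple functions, and passes to general $\mathcal{H}^{N}$-integrable $g$ by monotone convergence and a sign decomposition.

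The main technical obstacle is the well-definedness and correct multiplicativity of $C_N(f,x)$ under change of bilipschitz parametrization, which is the content of Reichel's Proposition~5.4 and which must be checked off the $\mathcal{L}^N$-null set on which either the metric differential fails to exist or has nontrivial kernel; a secondary point is the measurability of $y\mapsto\mathcal{H}^{0}(f^{-1}(y)\cap E)$, which follows from the countable rectifiable decomposition together with a Fubini-type argument on Borel approximations. Once these measure-theoretic issues are dispatched, the argument is essentially a reduction scheme, with Kirchheim's theorem providing the genuine analytic content.
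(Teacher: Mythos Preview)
The paper does not prove this theorem; it is quoted verbatim as the $m=N$ case of Reichel's Theorem~5.5 and used as a black box. There is therefore no proof in the paper to compare against. Your sketch is the standard reduction-to-Euclidean argument via a disjoint bilipschitz parametrization followed by Kirchheim's area formula, and it is correct in outline; this is essentially how Reichel proves it in his thesis, so your proposal matches the cited source rather than diverging from it.
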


 For the rest of the section we will specialize to the case that $X$  and $Y$ are non-collapsed $\RCD(K,N)$, spaces and $Y$ is also a $\CAT(-1)$ space.
 For a Lipschitz map $f:X\to Y$ we define the Jacobian of $f$ to be
 \begin{align}\label{eq:Jac}
\Jac_x f=\limsup_{r\to 0}\frac{ \mc{H}^{N}(f(B(x,r)))}{\mc{H}^{N}(B(x,r))}.
 \end{align}
 From the definition of the Hausdorff measure, $\Jac_xf$ is clearly $L^\infty$ as a function of $x$ with global bound $\op{Lip}(f)^N$.

In Equation 8.2 of Theorem 8.1 by Ambrosio--Kirchheim \cite{Ambrosio-Kirchheim} a tangential differential for  Lipschitz maps $g:S\to Z$ from a countably $\mc{H}^N$-rectifiable space $S$ to the dual of a separable Banach space is defined.
One can easily extend this to Lipschitz maps $F:X\to M$ where $M$ is a $C^1$ Riemannian manifold by taking charts in $M$.
We will write $d_xF:T_xX\to T_{F(x)}M$ for this tangential differential, which is defined for almost every $x\in X$.

\begin{Lemma}\label{lem:Jacobian}
For a Lipschitz map $f:X\to Y$ between two non-collapsed $\RCD(K,N)$ spaces $\mc{H}^{N}$-a.e. $x\in X$, we have
 \begin{align}\label{eq:Jac_equiv}
\abs{\det d_xf}=  \Jac_x f=C_N(f,x).
 \end{align}

 \end{Lemma}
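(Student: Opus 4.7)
The plan is to verify both equalities at $\mc{H}^N$-a.e. point $x \in \mc{R} \subset X$ such that $f(x) \in \mc{R}_Y$; on the complement all three quantities vanish a.e.: by Reichel's area formula (\cref{thm:coareaReichel}) applied to $E = f^{-1}(\mc{S}_Y)$ combined with $\mc{H}^N(\mc{S}_Y)=0$ (\cref{thm:MondinoNaberImproved}), the coarea factor $C_N(f,\cdot)$ vanishes a.e.~there, and the blow-up / chart arguments below run in parallel to force $\Jac_x f = 0$ and $\abs{\det d_x f}=0$ at these points (the target tangent cone is not Euclidean, so every linear image of $\R^N$ into it has $\mc{H}^N$-measure zero).

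For the equality $C_N(f,x) = \abs{\det d_xf}$, I would work in $(1+\eps)$-bi-Lipschitz Mondino--Naber charts from \cref{thm:MondinoNaberImproved}: $\alpha_i \co V_i \subset \R^N \to U_i^\eps \subset X$ covering $\mc{R}$, and analogous charts $\beta_j \co W_j \subset \R^N \to T_j^\eps \subset Y$ covering $\mc{R}_Y$, both up to $\mc{H}^N$-null sets. By Kirchheim's theorem, the metric differentials $\op{md}(\alpha_i,v)$, $\op{md}(\beta_j,w)$, and $\op{md}(\beta_j^{-1}\circ f\circ \alpha_i,v)$ exist a.e.~as seminorms on $\R^N$. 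Using a sequence of refined charts with $\eps \to 0$, at a density point $x=\alpha_i(v)$ the seminorms $\op{md}(\alpha_i,v)$ and $\op{md}(\beta_j,w)$ must coincide with the standard Euclidean norm, so their coarea factors equal $1$. The definition of the coarea factor then reads $C_N(f,x) = C_N(\op{md}(f\circ \alpha_i,v))$, and since the Ambrosio--Kirchheim tangential differential $d_xf\co T_xX \to T_{f(x)}Y$ is realized, through the two Euclidean chart identifications, by the linear map with matrix $\op{md}(\beta_j^{-1}\circ f\circ \alpha_i,v)$, this gives $C_N(f,x) = \abs{\det d_xf}$.

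For the equality $\Jac_x f = \abs{\det d_xf}$, I would prove two inequalities. The direction $\Jac_x f \leq C_N(f,x)$ follows from \cref{thm:coareaReichel} applied to $E=B(x,r)$, which gives
\[
\int_{B(x,r)} C_N(f,y)\,d\mc{H}^N(y) \;\geq\; \mc{H}^N(f(B(x,r)));
\]
dividing by $\mc{H}^N(B(x,r))$ and invoking Lebesgue differentiation yields the bound a.e. For the reverse inequality, I would rescale by $r^{-1}$: the pointed spaces $(X,r^{-1}d,\mc{H}^N/r^N,x)$ and $(Y,r^{-1}d_Y,\mc{H}^N/r^N,f(x))$ converge in pointed measured Gromov--Hausdorff sense to $(\R^N,d_{Euc},\omega_N^{-1}\mc{L}^N,0)$, and the Ambrosio--Kirchheim first-order expansion shows that $r^{-1}f\co B(x,r)\to Y$ converges uniformly on the unit ball to the linear map $d_xf$. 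By non-collapsed volume convergence for $\RCD$ spaces (De Philippis--Gigli), $r^{-N}\mc{H}^N(f(B(x,r))) \to \mc{H}^N(d_xf(B(0,1))) = \abs{\det d_xf}\,\omega_N$, giving $\Jac_x f \geq \abs{\det d_xf}$. The principal obstacle is justifying volume convergence of the image $f(B(x,r))$ under the blow-up, rather than of the domain ball; here one uses that the first-order expansion yields Hausdorff convergence of $r^{-1}f(B(x,r))$ to the genuine linear image $d_xf(B(0,1))$, so that the rectifiable structure on $\mc{R}_Y$ lets non-collapsed volume convergence apply on the target side.
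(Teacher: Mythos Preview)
The paper's proof is much shorter than yours: it directly invokes Ambrosio--Kirchheim's tangential differential (their Theorem~8.1), observes that on the regular set the tangent spaces $T_xX$ and $T_{f(x)}Y$ are genuinely Hilbertian so that $\abs{\det d_xf}$ is the ordinary linear-algebra determinant, and then cites the comment preceding Lemma~4.2 of \cite{Ambrosio-Kirchheim} for the equality $\abs{\det d_xf}=\Jac_xf$. The equality with $C_N(f,x)$ is deduced from the fact that the area formula of \cite{Ambrosio-Kirchheim} and Reichel's coarea formula coincide when $m=N$, so their Jacobian factors agree a.e.

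Your approach tries to rebuild this from first principles, and it has two weak points. First, in the chart argument for $C_N(f,x)=\abs{\det d_xf}$ you assert that ``using a sequence of refined charts with $\eps\to 0$'' forces $\op{md}(\alpha_i,v)$ to be the Euclidean norm. But the Mondino--Naber charts change with $\eps$; there is no limiting chart, and for each fixed $\eps$ the metric differential of $\alpha_i$ is only $(1+\eps)$-comparable to Euclidean, not equal. What actually makes this work is that the Ambrosio--Kirchheim tangential differential is defined intrinsically (chart-independently) and lands in the Euclidean tangent at regular points---this is precisely what the paper uses, together with Reichel's chart-independence of $C_N$.

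Second, and more seriously, your blow-up argument for $\Jac_x f\geq \abs{\det d_xf}$ has a genuine gap. De~Philippis--Gigli volume convergence applies to \emph{metric balls} in non-collapsed $\RCD$ spaces, not to arbitrary subsets such as $f(B(x,r))$, which are neither balls nor themselves $\RCD$ spaces. Hausdorff convergence of $r^{-1}f(B(x,r))$ to the linear image $d_xf(B(0,1))$ does not by itself yield lower semicontinuity of $\mc{H}^N$: a priori the image could fold or miss an open set, making $\mc{H}^N(f(B(x,r)))$ strictly smaller than expected. To repair this you would need a local surjectivity or degree statement at points where $d_xf$ is nondegenerate---which is available, but is exactly the content already packaged in the Ambrosio--Kirchheim density results the paper cites. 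Your inequality $\Jac_xf\leq C_N(f,x)$ via \cref{thm:coareaReichel} and Lebesgue differentiation is correct and is essentially the easy half.
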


\begin{proof}
By Theorem 8.1 of \cite{Ambrosio-Kirchheim} the tangential differential $d_xf:T_xX \to T_yY$ of $f:X\to Y$ exists almost everywhere, and moreover on the regular set the resulting Banach spaces $T_xX$ and $T_{f(x)}Y$ are Hilbertian, and in particular carry their Euclidean norm.

Consequently the tangential differential is just given by a linear map on an orthonormal basis and thus $\abs{\det d_xf}$ will coincide with $\Jac_xf$ (see the comment before Lemma 4.2 of \cite{Ambrosio-Kirchheim}).

Moreover, formula \eqref{eq:Jac} above shows the second equality in formula \eqref{eq:Jac_equiv}.
This also follows from the fact that the area and coarea formulas (Theorems 8.2 and 9.4 of \cite{Ambrosio-Kirchheim}) agree in codimension $0$, i.e. in the case $m=N$.
\end{proof}

While most likely known to experts, we are not aware that the statement of \cref{lem:Jacobian} has appeared in print, even for Alexandrov spaces.

 Using the coarea formula we may deduce the following.
\begin{Lemma}[Sard's Lemma]\label{lem:sard}
Let $(X,d,\mc{H}^N)$ be an $\RCD(-(N-1),N)$ space and $Y$ be an $N$-manifold.
For any Lipschitz map $f:X\to Y$ and $E\subset X$ measurable, we have $\mc{H}^{N}(f(E))=0$ if and only if $\Jac_xf=0$ for $\mc{H}^{N}$-a.e. $x\in E$.
\end{Lemma}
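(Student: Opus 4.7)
The plan is to derive both implications directly from Reichel's coarea formula (Theorem \ref{thm:coareaReichel}) applied with $g \equiv 1$, after first using Lemma \ref{lem:Jacobian} to replace the coarea factor $C_N(f,\cdot)$ by the Jacobian $\Jac_x f$. Although Lemma \ref{lem:Jacobian} is stated for two non-collapsed $\RCD(K,N)$ spaces, its proof uses only that the tangent spaces at regular points of $X$ are Hilbertian and that a tangential differential of $f$ exists $\mc{H}^N$-a.e.\ into a Hilbertian target; since $Y$ is a Riemannian $N$-manifold, this is immediate in local charts, so the identification $\Jac_x f = C_N(f,x)$ persists $\mc{H}^N$-a.e.\ on $X$.

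Assuming (by $\sigma$-finiteness and an exhaustion argument) that $\mc{H}^N(E) < \infty$, the coarea formula then gives
\begin{equation*}
\int_E \Jac_x f \, d\mc{H}^N(x) \;=\; \int_Y \mc{H}^0\!\left(f^{-1}(y) \cap E\right) d\mc{H}^N(y).
\end{equation*}
For the forward direction, if $\mc{H}^N(f(E)) = 0$, the right-hand integrand is concentrated on $f(E)$ (since $\mc{H}^0(f^{-1}(y) \cap E) > 0$ exactly when $y \in f(E)$), so it vanishes $\mc{H}^N$-a.e.\ in $Y$. Hence the right side equals zero, and therefore $\Jac_x f = 0$ for $\mc{H}^N$-a.e.\ $x \in E$. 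For the converse, if $\Jac_x f = 0$ for $\mc{H}^N$-a.e.\ $x \in E$, the left side vanishes, forcing $\mc{H}^0(f^{-1}(y) \cap E) = 0$ for $\mc{H}^N$-a.e.\ $y \in Y$; that is, $f^{-1}(y) \cap E = \emptyset$ for $\mc{H}^N$-a.e.\ $y$, so $f(E)$ is contained in an $\mc{H}^N$-null subset of $Y$.

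The only non-formal point is verifying that Lemma \ref{lem:Jacobian} transfers to the setting of a Riemannian target; I expect this to be the sole obstacle, and it should be a routine extension of the proof of that lemma by pulling back through a $C^1$ chart on $Y$, since Kirchheim's tangential differential and the area/coarea identities are local statements. A minor technical remark is that the image $f(E)$ of an $\mc{H}^N$-measurable set under a Lipschitz map need not itself be Borel, so the equality $\mc{H}^N(f(E)) = 0$ should be interpreted via outer measure; this is precisely the quantity the right-hand side of the coarea formula controls, so no further adjustment is needed.
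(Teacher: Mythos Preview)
Your proof is correct and follows essentially the same approach as the paper: apply the coarea formula (Theorem~\ref{thm:coareaReichel}) to obtain $\int_E \Jac_x f\, d\mc{H}^N = \int_Y \#(f^{-1}(y)\cap E)\, d\mc{H}^N(y)$, then read off both implications from the nonnegativity of each side. Your extra care about the applicability of Lemma~\ref{lem:Jacobian} to a Riemannian target, the $\sigma$-finiteness reduction, and the outer-measure interpretation of $\mc{H}^N(f(E))$ are reasonable remarks that the paper leaves implicit, but they do not alter the argument.
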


\begin{proof} Apply Theorem \ref{thm:coareaReichel}
to obtain
\begin{equation}\label{Eqn:CoareaFormula}
	    	\int_{E} \Jac_xf d\mathcal{H}^{N}(x) = \int_{f(E)} \#(f^{-1}(y)\cap E)d \mathcal{H}^{N}(y).
	\end{equation}
If $\mc{H}^{N}(f(E))=0$ then the right hand side vanishes and thus on the left hand side $\Jac_x(f)$ must vanish almost everywhere on $E$.
Conversely if $\Jac_x(f)=0$ for $\mc{H}^{N}$-a.e. $x\in E$, then the left hand side of (\ref{Eqn:CoareaFormula}) above vanishes, and thus the right hand side does as well. However, this is the preimage counting function on $Y$ and hence $\mc{H}^{N}(f(E))=0$.
\end{proof}

\subsection{Structure of spaces that are both \texorpdfstring{$\CAT(\kappa)$ and $\RCD(K,N)$}{} }

Here we collect the various properties of the target space in \cref{thm:schwarz} and \cref{thm:stab} that we will need. We place these in the following lemma which combines various results from
\cite{KapovitchKetterer}, \cite{Nikolaev:89}, \cite{Otsu},  and \cite{Berestovskij-Nikolaev:93}.

\begin{lemma}\label{lem:target_space}
Suppose $(Y,d_{Y},\mf{m}_{Y})$ is a locally $\CAT(\kappa)$ and $\RCD(K,N)$ space, then $K\leq \kappa(N-1)$ and,
\begin{enumerate}
	\item $(Y,d_{Y})$ is an Alexandrov space, specifically $\CBB(K-\kappa(N-2))$,
	\item Harmonic coordinates on $Y$ form a $C^3$-structure, and $Y$ is a smooth topological manifold,
	\item The metric on $Y$ is induced from a $C^{1,\alpha}\cap W^{2,p}$-Riemannian structure for all $p\geq 1$,
	\item The distance function $d_{Y,x}(\cdot)=d_{Y}(x,\cdot)$ satisfies $$\cot_{K-\kappa(N-2)}(d_{Y,x})\geq \op{Hess}(d_{Y,x})\geq \cot_{\kappa}(d_{Y,x})$$ in the weak sense, but only up to the injectivity radius about $x$ for the lower bound,
	\item For any $\eps>0$, $K'>\kappa$, and $K''<K-\kappa(N-2)$ there is a smooth Riemannian metric $g$ on $Y$ with sectional curvatures in $[K',K'']$ such that $(Y,g)$ is $(1+\eps)$-biLipschitz homeomorphic to $(Y,d_Y)$. In particular, their respective Hausdorff measures relate by
	\[
	\frac{1}{(1+\eps)^N} \le \frac{\mc{H}_{d_Y}^N(Y)}{\Vol_g(Y)}\leq (1+\eps)^N.
	\]
\end{enumerate}
\end{lemma}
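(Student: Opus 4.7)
The plan is to extract from the hypotheses a synthetic two-sided curvature bound on $Y$ and then invoke the classical structure and smoothing theory for metric spaces of bounded curvature in the sense of Alexandrov. Everything hinges on item (1): once $(Y,d_Y)$ is known to be both $\CAT(\kappa)$ and $\CBB$ for some explicit lower bound, items (2)--(5) follow from well-established results. The motivation for (1) comes from the elementary observation that on a smooth $N$-manifold with $\op{sec} \leq \kappa$ and $\op{Ric} \geq K$, writing any Ricci curvature as a sum of $N-1$ sectional curvatures and bounding $N-2$ of them above by $\kappa$ forces each individual sectional curvature to be at least $K-(N-2)\kappa$; as a byproduct, each Ricci curvature is at most $(N-1)\kappa$, yielding the compatibility condition $K \leq (N-1)\kappa$. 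The corresponding synthetic statement in the combined $\RCD$ and $\CAT$ framework is exactly the content of the Kapovitch--Ketterer result, which I would invoke to conclude that $(Y,d_Y)$ is $\CBB(K-\kappa(N-2))$.

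With $(Y,d_Y)$ now carrying two-sided comparison curvature bounds, I would apply the classical structure theorems of Nikolaev and Berestovskij--Nikolaev for length spaces of bounded curvature to establish (2) and (3): such a space admits a $C^3$-compatible atlas of harmonic coordinates in which the metric tensor is a $C^{1,\alpha}\cap W^{2,p}_{\mathrm{loc}}$ Riemannian metric for every $\alpha\in(0,1)$ and $p\geq 1$, and whose induced geodesic distance recovers $d_Y$; in particular $Y$ is a smooth topological $N$-manifold. Item (4) then follows from the standard Hessian comparison estimates for the distance function: the upper bound on $\op{Hess}(d_{Y,x})$ is the Toponogov-type comparison associated to the $\CBB(K-\kappa(N-2))$ bound from (1), valid within the injectivity radius about $x$ where $d_{Y,x}$ remains regular, while the lower bound is the Reshetnyak convexity estimate associated to $\CAT(\kappa)$, both interpreted in the weak sense using the Riemannian regularity from (3).

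For (5), I would invoke Nikolaev's approximation theorem: a length space of two-sided bounded curvature admits smooth Riemannian approximations whose sectional curvatures lie in any prescribed window strictly containing the original bounds and which converge to $(Y,d_Y)$ in the biLipschitz sense. Choosing the approximation sufficiently fine yields a smooth metric $g$ on $Y$ that is $(1+\epsv)$-biLipschitz to $d_Y$ with $\op{sec}_g \in [K',K'']$; the Hausdorff measure comparison then follows at once, since a $(1+\epsv)$-biLipschitz homeomorphism scales $N$-dimensional Hausdorff measure by a factor in $[(1+\epsv)^{-N},(1+\epsv)^N]$ and $\Vol_g$ coincides with the Hausdorff measure of the smooth metric. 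The principal technical obstacle lies in step (1): everything downstream is a matter of assembling classical results once the synthetic combined $\RCD$ and $\CAT$ condition has been promoted to a genuine Alexandrov lower curvature bound.
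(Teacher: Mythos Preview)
Your proposal is correct and follows essentially the same route as the paper: Kapovitch--Ketterer for item (1) and the bound $K\le\kappa(N-1)$, the Nikolaev/Berestovskij--Nikolaev structure theory for (2)--(3), Hessian comparison for (4), and Nikolaev's approximation theorem for (5). The only minor slip is in (4): the statement restricts the \emph{lower} bound $\op{Hess}(d_{Y,x})\geq \cot_\kappa(d_{Y,x})$ to the injectivity radius, not the upper bound as you wrote; the paper obtains this lower bound via Otsu's Jacobi field theory on the $\CAT(\kappa)$ Alexandrov space rather than a Reshetnyak-type argument, but the content is the same.
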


\begin{proof}
First we note that by Theorem 1.1 of Kapovitch and Ketterer \cite{KapovitchKetterer}, that the $\RCD$ constant $K$ for $Y$ satisfies $K\leq \kappa (N-1)$ and that $Y$ is an Alexandrov space of curvature bounded below by $K- \kappa (N-2)$.
In particular, $Y$ is homeomorphic to a $C^\infty$ manifold and the distance is induced from a $C^{1,\alpha}$ Riemannian metric with respect to a harmonic atlas.

By the metric Cartan--Hadamard theorem  (see Burago--Burago--Ivanov \cite{BurBurIva}), $\til{Y}$ is a globally $\CAT(\kappa)$ space.
By Theorem 3.5 of Otsu \cite{Otsu}, the theory of Jacobi fields holds a.e. on the $\CAT(\kappa)$ Alexandrov space $\til{Y}$.
As a consequence, the Hessian at $y\in Y$ of the distance function $d_{Y}(x,\cdot)$ is defined for a.e. $x,y\in Y$ and has the comparison bound \[
\Hess_x(d_{Y}(x,y))(v,v)\geq \cot_\kappa(d_{Y}(x,y)),
\]
for all $v$ orthogonal to $\grad_x d_{Y}(x,y)$ with respect to the $C^{1,\alpha}$ Riemannian metric which induces the Alexandrov metric.
(See also Kapovitch and Ketterer's Theorem 4.7 \cite{KapovitchKetterer} for a similar bound in an ostensibly more general context.)

The last statement is a restatement of the Approximation Theorem 3.1 of Nikolaev \cite{Nikolaev:89} (see also Theorem 15.1 of Berestovskij--Nikolaev \cite{Berestovskij-Nikolaev:93}).
\end{proof}

\begin{Remark}\label{rmrk-Exam}
Under the hypotheses of \cref{thm:schwarz}
 it turns out that $Y$ is homeomorphic to a closed smooth manifold by the previous lemma.
Hence the {\em orientable}
hypothesis makes sense, and can always be achieved by passing to a double cover if $Y$ is not orientable.
However, there may exist examples of such $Y$, even among negatively curved (good) orbifolds, which are not negatively curved Riemannanian manifolds \cite{DJL}.

\end{Remark}

\begin{Remark}\label{rmrk-moreExam} There are even more interesting examples of $\RCD$ spaces $X$ which satisfy the hypotheses of \cref{thm:schwarz}.
For instance, by a result of F.Galaz-Garc\'ia--Kell--Mondino--Sosa \cite[Corollary 8.10]{GalazEtAl:18}, the leaf space of an $\RCD$-space that admits a bounded metric-measure foliation (i.e. foliations with equidistant leaves of bounded diameter whose Wasserstein distance on point masses on the quotient agrees with the distance between leaves) is an $\RCD$-space.
This includes submetries, and quotients by isometric actions of compact groups on $\RCD$-spaces.
\end{Remark}

\subsection{Sobolev to Lipschitz Property  for Maps}
A metric measure space is said to satisfy the Sobolev to Lipschitz property if every Sobolev function with a uniformly bounded {minimal weak upper gradient} has a Lipschitz representative. RCD spaces are known to satisfy the Sobolev to Lipschitz property for real valued functions.
Since we will need such result for maps, we first recall the definition of Sobolev maps between RCD spaces.

{
\begin{Definition}[Sobolev map]\label{defsoble}
Let  $(X,d_{X},\mf{m}_{X})$
be a finite (dimensional), possibly non-compact,
$\RCD$ space, and
$(Y,d_{Y},\mf{m}_{Y})$
a finite (dimensional) compact $\RCD$ space.

We say that a map $F:U \to Y$ is a \textit{Sobolev map}, where $U \subset X$ is an open set, if the following two conditions hold:
\begin{enumerate}
\item For any Lipschitz function $\varphi$ on $Y$ we have $\varphi \circ F \in W^{1, 2}(U,
d_{X},\mf{m}_{X})$.
\item There exists $G \in L^2(U, \mf{m}_X)$ such that for any Lipschitz function $\varphi$ on $Y$ we have
\begin{equation}\label{eq-SobMap}
|\nabla (\varphi \circ F)|(x) \le {\rm Lip}( \varphi)\, G(x)\quad \text{for $\mf{m}_X$-a.e. $x \in U$}.
\end{equation}
\end{enumerate}
The smallest Borel function $G$ that satisfies \eqref{eq-SobMap} is denoted by $G_F$.
\end{Definition}

Using the Sobolev to Lipschitz property for functions on $\RCD$ spaces,
Honda and Sire \cite[Proposition 3.6]{Honda-Sire} showed that this property also holds for maps.

\begin{Proposition}[Sobolev to Lipschitz property for Sobolev maps] \label{lem:C-Lip}
Let $(X,d_{X},\mf{m}_{X})$ and $(Y,d_{Y},\mf{m}_{Y})$ be two compact $\RCD$ spaces and let $F:X \to Y$ be a Sobolev map and let $L \in [0, \infty)$. The following two conditions are equivalent.
\begin{enumerate}
\item The map $F$ has a Lipschitz representative with
\begin{equation*}
d_Y(F(x), F(x'))\le L d_X(x,x')
\end{equation*}
for all $x, x' \in X$.
\item We have $G_F(x)\le L$ for $\mf{m}_X$-a.e. $x \in X$.
\end{enumerate}
\end{Proposition}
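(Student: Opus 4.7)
The plan is to establish each direction separately, with (2) $\Rightarrow$ (1) being the substantive one. For the easy direction (1) $\Rightarrow$ (2), suppose $F$ has an $L$-Lipschitz representative. Then for every Lipschitz function $\varphi$ on $Y$ the composition $\varphi \circ F$ is $(L \op{Lip}(\varphi))$-Lipschitz, so its minimal weak upper gradient satisfies $|\nabla(\varphi \circ F)| \le L \op{Lip}(\varphi)$ $\mf{m}_X$-a.e. Hence the constant function $L$ is an admissible choice of Borel function $G$ in \eqref{eq-SobMap}, and the minimality of $G_F$ gives $G_F \le L$ $\mf{m}_X$-a.e.

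For the reverse implication (2) $\Rightarrow$ (1), the strategy is to reduce to the Sobolev-to-Lipschitz property for real-valued functions, which is already known to hold on $\RCD$ spaces. Since $(Y, d_Y)$ is compact and hence separable, fix a countable dense set $\{y_n\}_{n \in \N} \subset Y$ and consider the $1$-Lipschitz distance functions $\varphi_n(z) := d_Y(y_n, z)$. The hypothesis $G_F \le L$ combined with \eqref{eq-SobMap} yields $|\nabla(\varphi_n \circ F)| \le L$ $\mf{m}_X$-a.e. The Sobolev-to-Lipschitz property for scalar functions on $X$ then produces, for each $n$, an $L$-Lipschitz representative $h_n$ of $\varphi_n \circ F$, coinciding with $\varphi_n \circ F$ outside an $\mf{m}_X$-null set $N_n$. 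Setting $N := \bigcup_n N_n$, still $\mf{m}_X$-null, we obtain for all $x, x' \in X \setminus N$ and every $n$ that
\[
|d_Y(y_n, F(x)) - d_Y(y_n, F(x'))| = |h_n(x) - h_n(x')| \le L\, d_X(x, x').
\]

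Fix $x, x' \in X \setminus N$. The map $y \mapsto |d_Y(y, F(x)) - d_Y(y, F(x'))|$ is continuous on $Y$, bounded above by $d_Y(F(x), F(x'))$ by the triangle inequality, and attains this value at $y = F(x)$. Density of $\{y_n\}$ in $Y$ therefore promotes the pointwise bounds above to $d_Y(F(x), F(x')) \le L\, d_X(x, x')$, so $F|_{X \setminus N}$ is $L$-Lipschitz. Since $\mf{m}_X$ has full support on $X$ (as an $\RCD$ space), $X \setminus N$ is dense in $X$, and $F|_{X \setminus N}$ extends uniquely by uniform continuity to an $L$-Lipschitz map $\tilde F : X \to Y$, the desired representative. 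The main subtlety is the measure-theoretic bookkeeping: the Sobolev-to-Lipschitz theorem for scalar functions produces an exceptional null set depending on the test function, and compactness of $Y$ is precisely what permits the reduction from the uncountable family $\{d_Y(y, \cdot)\}_{y \in Y}$ to a countable dense subfamily, after which a single null union suffices and continuity in $y$ closes the remaining gap.
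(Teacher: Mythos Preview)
Your proof is correct. The paper does not give its own proof of this proposition; it simply attributes the result to Honda--Sire \cite[Proposition 3.6]{Honda-Sire}, noting that it follows from the Sobolev-to-Lipschitz property for real-valued functions on $\RCD$ spaces. Your argument is precisely the natural way to carry out that reduction: test against the countable family $\{d_Y(y_n,\cdot)\}$, apply the scalar Sobolev-to-Lipschitz property to each $\varphi_n\circ F$, take the countable union of null exceptional sets, and recover $d_Y(F(x),F(x'))$ by density. This is essentially the proof in Honda--Sire as well, so there is no meaningful divergence in approach---you have supplied the details the paper left to the cited reference.
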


Note that in the particular case when $F$ is Lipschitz $|d_xF| = G_F(x)$ a.e. $x \in X$.
}

\section{Bounds on the  index invariant of maps from \texorpdfstring{$\RCD(K,N)$}{} spaces}\label{sec:degree}

 The main result of this section is

 Theorem \ref{thm:ind-equal-homind} providing lower bounds on the average pre-image counting function in terms of the topological indices $\ind_{\pi}(f)$ and $\ind_{H}(f)$.
The result establishes relationships between our notion of  homology index in \eqref{def-h-ind}, and other fundamental ideas
used in traditional degree theory.

The well known Brouwer topological degree theory for topological manifolds (cf \cite{Lloyd}) can not be easily generalized {\it in toto} to metric spaces, unless the spaces and maps retain certain essential properties, such as a rank one top dimensional homology group.
Even in a context where a generalized topological degree theory and an analytic degree theory both make sense, connecting these together can prove challenging.
Indeed, local analytic notions of degree for Lipschitz maps can be formulated for a fairly wide class of metric spaces.
However, we do not know yet it such an analytic degree is globally pointwise constant.

In the existing proofs of the invariance of local degree, the underlying domain space must have neighborhoods of homotopy tracks\footnote{Homotopy tracks here mean the curve formed by taking the image of a point under the entire homotopy in the target space.} between oriented sets of pre-images of points induced by the map.
These neighborhoods should be absolute neighborhood retracts (ANR's).
This fundamental property
lies at the core of every proof we know of the invariance of local degrees.
This is always the case for smooth manifolds due to the local Euclidean structure.

In our case, we do not have this property on our $\RCD$ spaces. For example, there exist spaces $X$ that we consider with points having arbitrarily small neighborhoods with infinite second Betti number \cite{Menguy:00}.
Thus the question of how to extend the classical degree theory to our context remains open.
Nevertheless, we will introduce an analytic quantity for Lipschitz maps $f$, called $\pre(f)$, which we show dominates $\ind_{H}(f)$ and is sufficiently sharp to still obtain our results.

\subsection{The average number of preimages \texorpdfstring{$\pre(f)$}{}}

Suppose $X$ and $Y$ are non-collapsed $\RCD(K,N)$ spaces without boundary and finite measure, and $f:X\to Y$ is a Lipschitz map.
We define the {\em average number of preimages} of $f$ to be,
\begin{equation}\label{def:pre}
\pre(f)=\frac{1}{\mc{H}^N(Y)}\int_Y \#\set{f^{-1}(y)} d\mc{H}^N(y).
\end{equation}
Note that the function $\#\set{f^{-1}(y)}$ is measurable since $f$ is continuous.
Moreover, since $\mf{m}(X)=\mathcal{H}^{N}(X)$ is finite, the coarea formula (Theorem \ref{thm:coareaReichel}) implies that for a.e. $y\in Y$, the set $f^{-1}(y)$ is necessarily finite.
(Note that the hypotheses in Theorem \ref{thm:coareaReichel} are satisfied by our $\RCD(K,N)$ domain and target spaces.
Moreover, the coarea factor in the Jacobian vanishes on all lower dimensional strata $E_i\subset X$ for $i<N$ by definition
(see Section \ref{subsec:Jacobian}).)
By Section \ref{subsec:Jacobian}, we have $\det d_xf$ defined almost everywhere.
Moreover, the image under a Lipschitz map of the measure zero set where $\det d_xf$ is not defined has zero measure.
Hence, for almost every $y\in Y$ and every $x\in\set{f^{-1}(y)}$ we have $\det d_xf$ defined.
In particular, the {\em pointwise analytic degree} of the map $f$,
\begin{equation}\label{degree1}
\deg(f,y):=\sum\limits_{x\in f^{-1}(y)} {\rm sign} \det ( d_xf ),
\end{equation}
exists and is finite for a.e. $y\in Y$.

Now
\[
\#\set{f^{-1}(y)} \geq \abs{\sum_{x\in f^{-1}(y)} \op{sign}(\det d_x f)}.
\]
We therefore obtain
\begin{equation}\label{pre-inequality}
\pre(f)\geq \frac{1}{\mc{H}^N(Y)} \int_Y \abs{\deg(f,y)} d\mc{H}^N(y).
\end{equation}
The right hand side of (\ref{pre-inequality}) might be a more natural definition for the absolute degree, but since our $\pre(f)$ majorizes this quantity, it will turn out to be preferable.

\begin{Remark}
If $X$ were a smooth closed manifold, then it is well known that $\deg(f,y)$ is essentially constant in $y$ and a homotopy invariant of the map $f$ called the degree of $f$, $\deg(f)$.

Also, we observe that on the one hand if the Hausdorff dimension of $X$ is less than $N=\dim Y$, then $\pre(f)=0$ by Sard's Theorem (see Lemma \ref{lem:sard}).
On the other hand, when $f_*H_N(X)=\set{0}$, even if $f$ is homotopic to a constant map, it may be the case that $\pre(f)>0$.
In other words, $\pre(f)$ is only a geometric invariant, but it is bounded from below by computable topological invariants as we will see shortly.
\end{Remark}

\subsection{Bounds between the \texorpdfstring{$\ind_\pi(f)$ and $\ind_H(f)$  invariants and $\pre(f)$}{}}\label{sec:pre}

For the rest of this section we assume that $(Y,d_Y)$ is a locally $\CAT(\kappa)$ space, in addition to the assumption that $Y$ (as well as $X$) is a non-collapsed $\RCD(K,N)$ space (i.e., $(Y,d_Y,\mc{H}^{N})$, as well as $(X,d_X,\mc{H}^{N})$, is an $\RCD(K,N)$ space).
Therefore, by Lemma \ref{lem:target_space}, $Y$ is a smooth manifold. We further assume it is closed, orientable, and equipped with a $C^{1+\alpha}$-Riemannian metric.

Recall the definition of the homological index $\ind_H(f)$ and the fundamental index $\ind_\pi(f)$ from the introduction. Observe that these are nonnegative integral homotopy invariants of the map $f$.

\begin{Remark} \label{remark:fundamental and homological indeces}
We observe that the $\ind_H(f)$ may be strictly larger than $\ind_\pi(f)$. For example, let $X=M\# M$ be the connected sum of two copies of a
hyperbolic manifold $M$ of dimension $N$ and let $f:X\to M$ be the map which first collapses the connecting sphere in $X$ and then quotients under the reflection map on the resulting wedge product of $M$ with $M$.
This map is surjective between fundamental groups, but takes the fundamental class of $X$ to twice that of $M$ and hence has $\ind_H(f)=2$ but $\ind_\pi(f)=1$.
Note that the connecting sphere is nontrivial in the $(N-1)$-th homotopy group $\pi_{N-1}(X)$, so $X$ does not admit a hyperbolic (or even $\CAT(0)$) metric, unless $N=2$.
\end{Remark}

We now establish some lower bounds for $\pre(f)$.

\begin{Proposition}\label{prop:pre-pi1}
We have the following lower bound for $\pre(f)$:
\begin{equation}\label{eq:pre-bd-indpi}
    \pre(f)\geq \ind_\pi(f)
\end{equation}
Moreover, $\ind_\pi(f)$ divides $\ind_H(f)$.
\end{Proposition}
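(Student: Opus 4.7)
My plan handles both assertions via one construction. Set $k := \ind_\pi(f)$. When $k < \infty$, take the $k$-sheeted connected cover $p \colon Y' \to Y$ corresponding to the subgroup $f_*\pi_1(X) \le \pi_1(Y)$, equipped with the pullback metric so that $p$ is a local isometry of non-collapsed $\RCD(K,N)$ spaces; $f$ then lifts through $p$ to a $\pi_1$-surjective continuous map $f' \colon X \to Y'$ with $p \circ f' = f$. When the index is infinite (so $k = 0$ by the convention of \eqref{def-f-ind}), the same $Y'$ is a non-compact connected orientable $N$-manifold.

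For the divisibility $\ind_\pi(f) \mid \ind_H(f)$, I would factor $f_* = p_* \circ f'_*$ on $H_N$. In the finite-index case $Y'$ is a closed orientable $N$-manifold and $p_* \colon H_N(Y') \to H_N(Y)$ is multiplication by $\pm k$ under the identifications with $\Z$; thus $f_*H_N(X) \subseteq k \cdot H_N(Y)$, which makes $\ind_H(f)$ either a positive multiple of $k$ or infinite, and in the latter case $\ind_H(f) = 0$ by \eqref{def-h-ind} with $k \mid 0$ trivially. When $k = 0$, $Y'$ is non-compact, so $H_N(Y') = 0$, forcing $f_*H_N(X) = 0$ and hence $\ind_H(f) = 0$; both indices vanish and the divisibility is automatic.

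For $\pre(f) \ge \ind_\pi(f)$, the case $k = 0$ is vacuous, so assume $k \ge 1$. The fiber decomposition $\#f^{-1}(y) = \sum_{y' \in p^{-1}(y)} \#(f')^{-1}(y')$ together with the local-isometry change of variables
\begin{equation*}
\int_Y \sum_{y' \in p^{-1}(y)} h(y') \, d\mc{H}^{N}_Y(y) = \int_{Y'} h(y')\, d\mc{H}^{N}_{Y'}(y')
\end{equation*}
and the identity $\mc{H}^{N}(Y') = k\, \mc{H}^{N}(Y)$ combine to give the multiplicativity $\pre(f) = k \cdot \pre(f')$. The inequality is therefore equivalent to $\pre(f') \ge 1$ for the $\pi_1$-surjective lift.

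The hard step is precisely this lower bound $\pre(f') \ge 1$---the $\RCD$ analogue of the classical fact that a $\pi_1$-surjective map between closed aspherical $N$-manifolds has nonzero absolute degree. By the coarea formula (\cref{thm:coareaReichel}) together with \cref{lem:Jacobian}, it is equivalent to $\int_X \abs{\det d_x f'}\, d\mc{H}^{N}(x) \ge \mc{H}^{N}(Y')$. I would approach this by placing a measurable orientation on the regular set of $X$ (available because the absence of boundary forces the singular set to have codimension at least $2$), thereby making sense of the signed integral $\int_X \det d_x f' \, d\mc{H}^{N}$, identifying it with a degree-like pairing $d \cdot \mc{H}^{N}(Y')$ for some $d \in \Z$ coming from the class of $f'_*[X]$ in $H_N(Y') \cong \Z$ via the associated integral $N$-current, and arguing---in the spirit of Sambusetti \cite{Sambusetti-1999} and by exploiting the asphericity of the targets in the intended applications---that $\pi_1$-surjectivity forces $d \ne 0$. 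This is the step where the non-smoothness of $X$ requires the most care, since both the orientability of the regular part and the current-theoretic computation must be justified from the rectifiable structure of non-collapsed $\RCD(K,N)$ spaces.
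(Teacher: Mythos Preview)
Your divisibility argument via the cover $p: Y' \to Y$ is the same as the paper's. The genuine gap is in your treatment of the inequality $\pre(f) \ge \ind_\pi(f)$.

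You reduce correctly to $\pre(f') \ge 1$ for the $\pi_1$-surjective lift $f': X \to Y'$, and then propose to orient the regular part of $X$, form an integral $N$-current, extract a signed degree $d \in \Z$, and argue that $\pi_1$-surjectivity forces $d \ne 0$. None of these steps is actually carried out, and several face real obstacles in the $\RCD$ setting: the manifold part of $X$ is not assumed orientable, no fundamental class $[X] \in H_N(X;\Z)$ is available for a general non-collapsed $\RCD$ space, and the implication ``$\pi_1$-surjective onto a closed aspherical manifold $\Rightarrow$ nonzero degree'' is not a general theorem even when the domain is a smooth closed manifold. Your ``hard step'' is exactly the place where you need a new idea, and the sketch you give does not supply one.

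The paper bypasses all of this with a one-line observation. It first reduces to the case $f_*H_N(X) \ne 0$; then the factorization $f_* = p_* \circ \hat f_*$ forces $\hat f_* H_N(X) \ne 0$ in $H_N(\hat Y) \cong \Z$. But any continuous map into a closed orientable $N$-manifold that is nontrivial on top homology is automatically \emph{surjective}: a non-surjective map factors through the complement of a point, where $H_N$ vanishes. Surjectivity of $\hat f$ gives $\#\hat f^{-1}(\hat y) \ge 1$ for \emph{every} $\hat y \in \hat Y$, hence $\#f^{-1}(y) \ge k$ for every $y \in Y$---a pointwise bound, stronger than the averaged one you are aiming for. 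No orientation, current, or degree theory on $X$ is needed; only the target-side homology, which is already under control because $\hat Y$ is a closed orientable manifold.
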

\begin{Example}
By the proposition above, if $[\pi_1(Y),f_*\pi_1(X)]=\infty$ then
\[ [H_N(Y):f_*H_N(X)]=\infty,
\]
which in turn implies $f_*H_N(X)=\set{0}$. In this case, we may have $\pre(f)=0$, such as when $f$ is a constant map. However, we may also have $\pre(f)>0$, and hence the inequality can be strict. For example, take $X=Y$ and $f:Y\to Y$ to be a map homotopic to the constant map, but with image a closed disk of $Y$ with finite preimages for each point.
\end{Example}

\begin{proof}[Proof of \cref{prop:pre-pi1}]
If $f_*H_N(X) = \set{0}$, then the right hand side of the inequality \eqref{eq:pre-bd-indpi} vanishes.
So we may assume $f_*H_N(X) \neq \set{0}$.

By
covering theory
there exists a cover $p:\hat{Y}\to Y$ with $p_*\pi_1\left( \hat{Y} \right) =f_*\pi_1(X)<\pi_1(Y)$.
By construction, this cover satisfies the lifting condition for $f$, so there is a lift $\hat{f}:X\to \hat{Y}$ such that $p\of \hat{f}=f$. Moreover, by functoriality, passing to homology, we have that  $H_N(X)\stackrel{f_*}{\longrightarrow}H_N(Y)$ is the composition of
$$H_N(X)\stackrel{\hat{f}_*}{\longrightarrow}H_N(\hat{Y})\stackrel{p_*}{\longrightarrow}H_N(Y).$$
Note that $$[\pi_1(Y),f_*\pi_1(X)]=[\pi_1(Y),p_*{\pi_1}(\hat{Y})]=[H_N(Y):p_*H_N(\hat{Y})],$$ where the last equality follows from the fact that covering maps have exactly $[\pi_1(Y),p_*{\pi_1}(\hat{Y})]$ preimages and that we may apply degree theory on the closed manifold $Y$.
Group indexes are multiplicative under composition, therefore $[\pi_1(Y),f_*\pi_1(X)]$ divides $[H_N(Y):f_*H_N(X)]$, when these are finite. (If $[\pi_1(Y),f_*\pi_1(X)]$ is infinite then so is $[H_N(Y):f_*H_N(X)]$ and $\ind_\pi(f)=\ind_H(f)=0$.)

As $Y$ is a closed manifold, the map $f$ is surjective, and hence so is $\hat{f}$. Therefore, the number of preimages of $f$ of any point $y\in Y$ is at least one for each of the $\deg(p)=[\pi_1(Y),f_*\pi_1(X)]$ preimages in $\hat{Y}$.
\end{proof}

We will need the following definitions for the next proposition. Assume that $N$ is a nonnegative integer and the metric space $Y$ has $N$-th singular homology group  $H_N(Y)\cong \Z$. For any singular homology class $\alpha\in H_N(X)$ define
\[
\norm{\alpha}_\infty:=\inf_{c\in \alpha}\sup_i\set{\abs{a_i}\in \N \, :\, c=\sum_i a_i \sigma_i,\,  \sigma_i: \Delta^N\to X  \text{ singular simplices, } a_i\in \Z}.
\]

(Note that this is not a true norm for any field since it is $\N$-valued, in general it is only a seminorm.)
Let
\[
Z(f)  =  \set{\alpha\in H_N(X)\,:\,[H_N(Y):\inner{f_*(\alpha)}]  =  [H_N(Y):f_*H_N(X)]}
\]
 be the set of singular homology classes whose image generates a subgroup of $H_N(Y)$ achieving the homological index. Set
\begin{align*}
\op{maxco}(f)=\inf_{\alpha\in Z(f)}\norm{\alpha}_\infty
\end{align*}
to be the infimum over classes in $Z(f)$ (same as the minimum in $\N$) of the $\ell^\infty$-seminorm.

\begin{Proposition}\label{prop:pre-hom}
We have the following lower bounds for $\pre(f)$:
\[
\pre(f)\geq \frac{\ind_H(f)}{\op{maxco}(f)}.
\]
\end{Proposition}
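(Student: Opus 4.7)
My plan is, under the assumption $d:=\ind_H(f)\ge 1$ (else the inequality is trivial), to fix $\epsv>0$, pick $\alpha\in Z(f)$ with $\|\alpha\|_\infty\le M:=\op{maxco}(f)+\epsv$, and a singular cycle $c=\sum_i a_i\sigma_i$ representing $\alpha$ with $\max_i|a_i|\le M$. Since $\alpha\in Z(f)$ and $H_N(Y)\cong\Z$, the pushed-forward cycle $f_*c=\sum_i a_i(f\circ\sigma_i)$ represents $\pm d\,[Y]$. The goal is to derive the pointwise bound $d\le M\cdot \#\{f^{-1}(y)\}$ for $\mc{H}^N$-a.e.\ $y\in Y$ and then integrate.

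The key technical step is to promote the algebraic representative $c$ to a \emph{geometric} one: an integer-rectifiable $N$-current $T_\alpha$ on $X$ (using the $\mc{H}^N$-rectifiability of $X$ from \cref{thm:MondinoNaberImproved}) whose integer multiplicity function $\mu_\alpha\co X\to\Z$ satisfies $|\mu_\alpha(x)|\le M$ for $\mc{H}^N$-a.e.\ $x\in X$. This should be achievable by barycentric subdivision of the $\sigma_i$ (which preserves $\max|a_i|$) followed by a cancellation procedure that replaces overlapping simplex images by their oriented union, yielding a current carried by pairwise interior-disjoint pieces with integer multiplicities bounded by $M$. The pushforward $f_*T_\alpha$ is then a rectifiable $N$-current in the smooth closed oriented manifold $Y$ (smoothness via \cref{lem:target_space}) representing $\pm d\,[Y]$.

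Pairing $f_*T_\alpha$ with the Riemannian volume form $\omega_Y$ on $Y$ (normalized so $\int_Y\omega_Y=\mc{H}^N(Y)$), the chain rule on the regular set of $X$ together with the area formula yields
\[
\pm d\cdot\mc{H}^N(Y) \;=\; \langle f_*T_\alpha,\omega_Y\rangle \;=\; \int_X \mu_\alpha(x)\cdot\op{sign}\det(d_xf)\cdot\Jac_xf\,d\mc{H}^N(x).
\]
Taking absolute values, using $|\mu_\alpha|\le M$, and invoking the coarea identity $\int_X\Jac_xf\,d\mc{H}^N=\int_Y\#\{f^{-1}(y)\}\,d\mc{H}^N(y)=\pre(f)\cdot\mc{H}^N(Y)$ (from \cref{thm:coareaReichel} and \cref{lem:Jacobian}), I obtain $d\le M\cdot\pre(f)$. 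Letting $\epsv\to 0$ yields $\pre(f)\ge\ind_H(f)/\op{maxco}(f)$.

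The main obstacle is the construction of the geometric representative $T_\alpha$ with multiplicity bounded by $M$: on a smooth manifold this is standard (a triangulation suffices), but on an $\RCD$ space---which is only $\mc{H}^N$-rectifiable and admits no smooth or simplicial structure---it requires combining the Mondino--Naber chart decomposition with Ambrosio--Kirchheim integer-rectifiable current theory and a careful multiplicity-preserving representation of singular homology classes in terms of the $\ell^\infty$ coefficient seminorm. Once this representative is in hand, the rest of the argument is a routine duality and coarea computation.
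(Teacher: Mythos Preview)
The paper takes a much more elementary route. It never introduces currents: given a singular cycle $c=\sum_j a_j\sigma_j$ with $[c]\in Z(f)$ and $f_*[c]=k[Y]$, it argues directly that for each $y\in Y$,
\[
k\ \leq\ \sum_{\{j:\, y\in (f\circ\sigma_j)(\Delta^N)\}} |a_j|\ \leq\ \#\{f^{-1}(y)\}\cdot\max_i|a_i|,
\]
then averages over $Y$ and takes the infimum over cycles in $Z(f)$. No area formula, no multiplicity function, no current theory.

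Your route via integer-rectifiable currents is genuinely different, and the obstacle you flag is real---but the resolution you sketch does not close it. A singular chain $\sum_i a_i\sigma_i$ with $\max_i|a_i|\le M$ does \emph{not} in general induce a rectifiable current with pointwise multiplicity bounded by $M$: if two simplices $\sigma_i,\sigma_j$ with coefficients of the same sign have overlapping images in $X$, the associated current carries multiplicity $|a_i+a_j|$ on the overlap, which can exceed $M$. Barycentric subdivision preserves $\max|a_i|$ but does nothing to separate the images, and your ``cancellation procedure'' can only lower multiplicity where orientations oppose---where they agree it raises it. On a smooth manifold one fixes this by passing to a triangulation, which forces interior-disjointness; on an $\RCD$ space there is no such structure, and the Mondino--Naber charts are only biLipschitz onto arbitrary \emph{subsets} of $\R^N$, so they do not supply a disjoint simplicial representative either. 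This gap between the $\ell^\infty$ seminorm on singular chains and a pointwise multiplicity bound on currents is precisely what the paper's chain-level counting argument sidesteps.
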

\begin{proof}
Since the statement is trivial when $\ind_H(f)=0$, we may assume
\[
[H_N(Y):f_*H_N(X)]<\infty.
\]
Consider a singular cycle $c=\sum_i a_i \sigma_i\in [c]\in Z(f)$ with singular simplices $\sigma_i: \Delta^N\to X$ and $a_i\in \Z$. Recall that by definition the class $[c]\in H_N(X)$ achieves $[f_*(c)]=k[Y]$ where
\[
k=[H_N(Y):f_*H_N(X)]<\infty,
\]
 and $[Y]\in H_N(Y)$ is the fundamental class generating $H_N(Y)$.

In particular, $[\sum_i a_i f\of \sigma_i]=k[Y]$. Since $[Y]$ has a representative singular cycle with all coefficients $1$ and whose support is all of $Y$, for any point $y\in Y$ we have
\[
k\leq \sum_{\set{j\, :\, y\in f\of \sigma_j(\Delta^N)}} \abs{a_j}\leq \#\set{f^{-1}(y)}\max_i\set{\abs{a_i}}.
\]
Average over $Y$, and take the infimum over cycles representing classes in $Z(f)$ to obtain $k\leq \pre(f)\op{maxco}(f)$, as desired.\end{proof}

\begin{Corollary}
Suppose there exists an $N$-dimensional simplicial complex $K$ which is homotopy equivalent to $X$, and such that every $(N-1)$-face of $K$ bounds at most two $N$-faces. Then, for any map $f:X\to Y$,
\[
\pre(f)\geq \ind_H(f).
\]
\end{Corollary}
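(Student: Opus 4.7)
The plan is to apply the pointwise estimate underlying Proposition \ref{prop:pre-hom} not to a class in $Z(f)$ but separately to each ``fundamental cycle'' of a pseudomanifold component of $K$. The hypothesis on $(N-1)$-faces will force each of these cycles to have $\ell^\infty$-norm equal to $1$, and then passing from the individual bounds to the gcd yields the claim.

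First I would use $H_N(X) \cong H_N(K)$ to analyze the group $Z_N(K)$ of simplicial $N$-cycles directly. Define an equivalence relation on the $N$-faces of $K$ by declaring $\sigma \sim \sigma'$ whenever $\sigma$ and $\sigma'$ share an $(N-1)$-face that bounds exactly two $N$-faces, and let $C_1,\dots,C_m$ denote the resulting components. Examining the boundary-vanishing condition at each $(N-1)$-face separately shows that in any simplicial $N$-cycle $\sum a_i \sigma_i$: (i) if some $(N-1)$-face in $C_j$ bounds only one $N$-face of $C_j$, then all coefficients on $C_j$ must vanish; (ii) within a closed component all coefficients take the form $\pm a_j$ for a single integer $a_j$, and the consistency of the forced signs is exactly the orientability of $C_j$ (non-orientable closed components force $a_j=0$ over $\mathbb{Z}$). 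It follows that $Z_N(K)$ is free abelian, with basis the fundamental cycles $[C_j]$ of the orientable closed components, and $\|[C_j]\|_\infty = 1$ for each such $j$.

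Since $Y$ is a closed orientable $N$-manifold, $H_N(Y) \cong \mathbb{Z}\cdot[Y]$, so we may write $f_*[C_j] = k_j [Y]$ with $k_j \in \mathbb{Z}$. The image $f_*H_N(K)$ is then the subgroup generated by $\{k_j[Y]\}$, whence $\ind_H(f) = \gcd_j |k_j|$ (with $\ind_H(f)=0$ in the degenerate case that all $k_j$ vanish, and the inequality is trivial). The key observation is that the chain of inequalities
\[
|k| \;\leq\; \sum_{\{j\,:\,y\in f\circ\sigma_j(\Delta^N)\}} |a_j| \;\leq\; \#\{f^{-1}(y)\}\,\max_i|a_i|
\]
from the proof of \cref{prop:pre-hom} uses only that $f_*[c] = k[Y]$, not that $[c]\in Z(f)$. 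Applying it to $c=[C_j]$ for each $j$ and integrating over $Y$ yields $|k_j| \leq \pre(f)$ for every $j$. Since any gcd is bounded above by the smallest of its arguments, we conclude
\[
\ind_H(f) \;=\; \gcd_j |k_j| \;\leq\; \min_j |k_j| \;\leq\; \pre(f),
\]
as desired.

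The main technical step is the combinatorial decomposition of $Z_N(K)$ in the second paragraph; this is where the ``at most two $N$-faces'' hypothesis enters essentially, since without it the boundary equation at an $(N-1)$-face couples more than two top-simplicial coefficients and they need no longer equal each other up to sign. Once this decomposition is established, the estimate from \cref{prop:pre-hom} effectively applies one basis element at a time and the arithmetic of the gcd finishes the argument.
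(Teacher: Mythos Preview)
Your argument is correct and closely parallels the paper's, though you organize the combinatorics differently. The paper also pulls back to a simplicial cycle on $K$ and uses the ``at most two $N$-faces'' hypothesis to force adjacent $N$-cells to carry equal coefficients up to sign; it then asserts that any class in $Z(f)$ has a simplicial representative whose nonzero coefficients are all a single constant $r$, divides through by $r$ to reach a contradiction unless $r=\pm1$, concludes $\op{maxco}(f)=1$, and applies Proposition~\ref{prop:pre-hom} directly. Your version instead exhibits the explicit basis $\{[C_j]\}$ of $Z_N(K)$ coming from the orientable closed pseudomanifold components, applies the pointwise inequality from the proof of Proposition~\ref{prop:pre-hom} to each basis cycle separately, and finishes via the gcd. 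The per-component treatment is more explicit when $K$ has several such components: the paper's phrase ``the nonzero coefficients of $c'$ must all be constant $r$'' is literally true only within a single component, and a class achieving the homological index can require different coefficients on different components (for instance, if $f_*[C_1]=3[Y]$ and $f_*[C_2]=5[Y]$, any simplicial cycle realising $\ind_H(f)=1$ has $\ell^\infty$-norm at least $2$). Your route avoids this by never needing to realise the gcd with a single cycle.

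One small slip: the step $\gcd_j |k_j| \leq \min_j |k_j|$ fails if some $k_j=0$, since $\gcd(0,a)=a$. The fix is immediate: as not all $k_j$ vanish, choose any $j_0$ with $k_{j_0}\neq 0$; then $\ind_H(f)$ divides $|k_{j_0}|$, so $\ind_H(f)\leq |k_{j_0}|\leq \pre(f)$.
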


To the best of our knowledge it is not currently known if $\RCD$-spaces are dominated by $CW$-complexes.
Observe that Alexandrov spaces are ANR's, and therefore, by a result of Borsuk, they are dominated by $CW$-complexes.

\begin{proof}
Let $i:K\to X$ be the homotopy equivalence. Any integral cycle $c$ with $[H_N(Y):\inner{f_*[c]}]=[H_N(Y):f_*H_N(X)]$ has a pullback $[i^*(c)]$ on $K$ which has a simplicial cycle representative $c'$. If $\sigma$ is an $N$-cell with a boundary cell adjacent only to $\sigma$, then the coefficient of $\sigma$ must be $0$ to be a cycle. Similarly
every $N$-cell $\sigma'$ adjacent to an $N$-cell $\sigma$ must carry the same coefficient with opposite sign, or else the boundary maps will fail to cancel. Hence the nonzero coefficients of $c'$ must all be constant $r$. If $r\neq \pm 1$ then $c''=\frac{1}{r} c'$ is an integral cycle with
\[
[H_N(Y):\inner{f_*i_*[c'']}]<[H_N(Y):\inner{f_*[c]}],
\]
contrary to hypothesis. It follows that $\op{maxco}(f)=1$ and by Proposition \ref{prop:pre-hom} the result follows.
\end{proof}

\begin{Remark}
In the above setting note that there may not be a nontrivial $N$-homology class on $K$ since placing coefficient $1$ on boundaryless $N$-cells may lead to a conflict in assignment of orientations. We also may be in the situation where the $N$-th Betti number of $K$ is larger than one.
\end{Remark}

In the next example we will illustrate the possible pathologies that have to be considered when working with these kinds of metric spaces.

\begin{Example}\label{ex:maxco}
Consider the space $X$ formed by removing an open disk $D$ from a closed orientable hyperbolic $N$-manifold $Y$  and attaching $k$ distinct copies $D_i$ of $D$ via the identity map from $\partial D_i\to \partial D$.
Then, on the space $X := (Y\setminus D) \bigcup_{i=1}^k D_i$, form an $N$-cycle $c$ by triangulating $X$ and placing the coefficient $1$ on each $D_i$, and $k$ on each cell of $X\setminus \bigcup_{i=1}^k D_i$, with the same orientations.
This is a simple example of an $N$-homology class that must have a coefficient larger than 1 while other coefficients are equal to 1.
Note that $c$ is a cycle with some cells of coefficient $1$ and $\norm{c}_\infty=k$.
Nevertheless, this cycle is not primitive in $H_N(X)\cong \Z^k$, but rather the sum of the natural generators formed by taking the $k$ copies of the fundamental class of $Y$ which each pass through exactly one of  the $D_i$, and thus take coefficient $0$ on the remaining disks.
If $f:X\to Y$ is the map collapsing all the $D_i$ to $D$, then $\op{maxco}(f)=1$ as the $k$ copies of the fundamental classes of $Y$ in $H_N(X)$ are carried identically onto $[Y]\in H_N(Y)$.
We are not aware of any simplicial $N$-complex (or even CW-complex) $X$ such that every set of integral $N$-cycles generating $H_N(X,\Z)$  has at least one member with one coefficient with absolute value larger than $1$.
However, the space $X$ here has branching geodesics, so it can not admit an $\RCD$-structure. Perhaps it admits a $\mathsf{CD}$-structure.

\end{Example}

Let $X$ be an $\RCD(K,N)$ space, and $Y$ a closed orientable $N$-manifold.

In what follows we denote the essential supremum of a function $g: X \to \R$ by
\[
\esssup g(x)=\inf \left\{b \in \R: \mc{H}^{N}(\{x :  g(x) > b \})=0\right\}.
\]
The next proposition provides a homological lower bound for the number of preimages of a point under the map $f$.

\begin{Proposition}\label{prop:pre-est}
If $f:X\to Y$ has $f_*H_N(X)\neq \set{0}$ then
\[
\pre(f)\geq \frac{[H_N(Y):f_*H_N(X)]}{\esssup_{x\in X} [H_N(X,X\setminus\set{x}):j_*H_N(X)]}
\]
where $j_*:H_N(X)\to H_N(X,X\setminus\set{x})$ is the map induced by inclusion on pairs. In particular, the right hand side denominator does not vanish.
\end{Proposition}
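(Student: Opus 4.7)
Let $m := [H_N(Y) : f_*H_N(X)]\in\mathbb{N}$, which is positive by the hypothesis $f_*H_N(X)\neq 0$, and fix $\alpha\in H_N(X)$ with $f_*\alpha = m[Y]$, where $[Y]$ generates $H_N(Y)\cong\Z$. Set $D := \esssup_{x\in X}[H_N(X, X\setminus\{x\}) : j_*H_N(X)]$. By the coarea formula (\cref{thm:coareaReichel}), $f^{-1}(y) = \{x_1,\dots,x_{k(y)}\}$ is finite for $\mc{H}^N$-a.e.~$y\in Y$, and a second application of the coarea formula to the $\mc{H}^N$-null set $E = \{x : d_x > D\}$ shows that $f^{-1}(y)\cap E = \emptyset$ for a.e.~$y$, so every preimage of a generic $y$ satisfies $d_{x_i}\le D$.

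\textbf{Excision and the conservation law.} For such $y$, choose disjoint open neighborhoods $U_i\ni x_i$ with $f^{-1}(y)\cap U_i = \{x_i\}$. Excision yields
\[
H_N(X, X\setminus f^{-1}(y)) \;\cong\; \bigoplus_{i=1}^{k(y)} H_N(X, X\setminus\{x_i\}),
\]
under which the pair map $f\colon (X, X\setminus f^{-1}(y))\to (Y, Y\setminus\{y\})$ becomes the sum of local pushforwards $(f|_{U_i})_*\colon H_N(X, X\setminus\{x_i\})\to H_N(Y, Y\setminus\{y\})\cong\Z$. Writing $a_i$ for the natural image of $\alpha$ in $H_N(X, X\setminus\{x_i\})$ and $c_i := (f|_{U_i})_*(a_i)\in\Z$, the commutative square between $H_N(X)\xrightarrow{f_*}H_N(Y)$ and $\bigoplus_i H_N(X, X\setminus\{x_i\})\xrightarrow{\sum(f|_{U_i})_*}H_N(Y, Y\setminus\{y\})$ — combined with the fact that $[Y]$ maps to a generator of $H_N(Y, Y\setminus\{y\})\cong\Z$ — forces the conservation law $\sum_{i=1}^{k(y)} c_i = m$.

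\textbf{The key bound.} The crux is to establish the pointwise estimate $|c_i|\le d_{x_i}\le D$ for $\mc{H}^N$-a.e.~$y$. This uses two ingredients: (i) $a_i$ lies in the index-$d_{x_i}$ subgroup $j_*H_N(X)\subseteq H_N(X, X\setminus\{x_i\})$, which bounds its local magnitude by $d_{x_i}$ once $\alpha$ is chosen primitive in its preimage under $f_*$; and (ii) for $\mc{H}^N$-a.e.~$y$, the local pushforward $(f|_{U_i})_*$ is multiplication by $\pm 1$, which one expects at preimages where $\Jac f$ does not vanish by a Lipschitz local-inverse argument leveraging the $\RCD$-regularity of $X$. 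I expect part (ii) to be the main technical obstacle: unlike in the smooth case, the naive square $H_N(X)\to H_N(X, X\setminus\{x_i\})\to H_N(Y, Y\setminus\{y\})$ fails to commute with $f_*\circ j^Y$ as soon as $k(y)\ge 2$ (since $f$ sends $X\setminus\{x_i\}$ into $Y$ but not into $Y\setminus\{y\}$), so one must exploit the excision decomposition together with the local homological structure of an $\RCD$ space to get the bound.

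\textbf{Conclusion.} With the bound in hand, for a.e.~$y$,
\[
k(y)\cdot D \;\ge\; \sum_{i=1}^{k(y)} |c_i| \;\ge\; \biggl|\sum_{i=1}^{k(y)} c_i\biggr| \;=\; m,
\]
so $k(y)\ge m/D$. Averaging over $Y$ yields
\[
\pre(f) \;=\; \frac{1}{\mc{H}^N(Y)}\int_Y k(y)\,d\mc{H}^N(y) \;\ge\; \frac{m}{D},
\]
the desired inequality. The denominator does not vanish: at any point where $j_*H_N(X)$ has finite index one has $d_x\ge 1$, and the hypothesis $f_*H_N(X)\neq 0$ ensures this holds on a set of positive $\mc{H}^N$-measure, so $D\ge 1$.
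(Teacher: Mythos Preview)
Your overall architecture---excision splitting of $H_N(X,X\setminus f^{-1}(y))$ and the conservation law $\sum_i c_i=m$---matches the paper's. The genuine gap is your ``key bound'' $|c_i|\le d_{x_i}$, where the inequality points the wrong way. Saying that $a_i=j_*^{x_i}\alpha$ lies in the index-$d_{x_i}$ subgroup $j_*H_N(X)\subset H_N(X,X\setminus\{x_i\})\cong\Z$ means $a_i\in d_{x_i}\Z$, so $|a_i|$ is a \emph{multiple} of $d_{x_i}$, not bounded above by it. Primitivity of $\alpha$ does not rescue this: in the two-disk space of \cref{ex:maxco} (with $k=2$), the primitive class $\alpha=[Y_1]+[Y_2]\in H_N(X)\cong\Z^2$ has $j_*^x\alpha=2$ at every $x\in Y\setminus D$, while $d_x=1$ there. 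Thus even granting (ii), the combination of (i) and (ii) yields only $|c_i|\in d_{x_i}\Z$, which gives no upper bound on $|c_i|$, and your final chain $k(y)\cdot D\ge\sum|c_i|\ge m$ does not follow.

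The paper sidesteps this by never fixing a single class $\alpha$. It works instead with the full image subgroup $k_*^{-1}f_*j_*H_N(X)\subset H_N(Y)$ and factorizes its index through the local pieces, obtaining for a.e.\ $y$
\[
m\;\le\;\sum_{x\in f^{-1}(y)}\bigl[H_N(Y,Y\setminus\{y\}):f_*H_N(X,X\setminus\{x\})\bigr]\cdot d_x.
\]
The input that replaces your (ii) is that a.e.\ preimage $x$ lies in the $C^{1,\alpha}$-manifold part of $X$ (the Kapovitch--Mondino codimension-$2$ bound on the singular set, pushed forward via Sard), which forces the first factor to equal $1$ at those points. This gives $m\le\sum_x d_x\le k(y)\cdot\sup_x d_x$ pointwise, and averaging plus H\"older finishes. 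Working at the level of image subgroups and their indices, rather than the value of a fixed element, is what makes the inequality run in the correct direction.
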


\begin{proof}
Let us first recall that an absolute local degree for a continuous map $f:V\to Q$ between an arbitrary topological space $V$ and a closed oriented $N$-manifold $Q$ can be defined.
Specifically, for $q\in Q$ we have the following maps with $\Z$ coefficients
\[
\begin{tikzcd}
H_N(V) \arrow[r,"j_*"] & H_N(V,V\setminus f^{-1}(q))\arrow[r,"f_*"] & H_N(Q,Q\setminus \set{q}) &  \arrow[l,"k_*"'] H_N(Q)
\end{tikzcd}
\]
where $j$ and $k$ are the inclusions into the relative homology groups.
(Note that $k_*$ is an isomorphism induced by the image of the fundamental class $k_*[Q]$.)

We define the absolute local degree,
\[
\abs{\deg(f,q)}=[H_N(Q):k_*^{-1}f_*j_*H_N(V)].
\]

 For a generic $q\in Q$, the preimage of a Lipschitz map is a countable discrete set.
  Hence $H_N(V,V\setminus f^{-1}(q))\cong \directsum_{v\in f^{-1}(q)} H_N(V,V\setminus\set{v})$.

To aid our exposition, we introduce the following notation for the indices of homological subgroups that we work with.
Define:
\begin{align*}
\mathcal{I}[Q,q:f_{\ast}V,v] &:= [H_N(Q,Q\setminus\set{q}):f_*H_N(V,V\setminus\set{v})]\\
\mathcal{I}[V,v:j_{\ast}^{v}V] &:= [H_N(V,V\setminus\set{v}):j^v_*H_N(V)]
\end{align*}
where $j^v_*:H_N(V)\to H_N(V,V\setminus\set{v})$ is the map induced by inclusion.

By considering each component separately and factorizing the index over the compositions of the homomorphisms we have
\begin{align*}
\abs{\deg(f,q)}&\leq \sum_{\substack{v\in f^{-1}(q)\\ j^v_*H_N(V)\neq \set{0}}}
\mathcal{I}[Q,q:f_{\ast}V,v] \cdot \mathcal{I}[V,v:j_{\ast}^{v}V]\\
&\leq\#f^{-1}(q)\cdot\sup_{v\in f^{-1}(q)} \mathcal{I}[Q,q:f_{\ast}V,v] \cdot \mathcal{I}[V,v:j_{\ast}^{v}V].
\end{align*}

Note that  $[H_N(Q):f_*H_N(V)]\leq [H_N(Q):k_*^{-1}f_*j_*H_N(V)]=\abs{\deg(f,q)}$ for every choice of $q$.
Therefore we obtain:
\[
[H_N(Q):f_*H_N(V)]\leq\#f^{-1}(q)\cdot\sup_{v\in V} \mathcal{I}[Q,q:f_{\ast}V,v] \cdot \mathcal{I}[V,v:j_{\ast}^{v}V]
\]

Now we specialize to the case when $V=X$, $Q=Y$ and $f:X\to Y$ is our initial Lipschitz map.
By \cite[Theorem 4.11]{Kapovitch-Mondino}, there is a set $A\subset X$ of Hausdorff codimension at least $2$, such that $X\setminus A$ is a $C^{1+\alpha}$ manifold.
(We observe that $A$ may not contain all of the singular set $\mc{S}$ as singular points may be manifold points and can be dense.)
By the generalized Sard's Lemma~\ref{lem:sard}, the set $f(\mc{S}\bigcup A)\subset Y$ also has measure $0$.
Hence at each regular point $x\in X\setminus (\mc{S}\bigcup A)$, the tangent space is isomorphic to a Euclidean space.
Therefore $s$ has a neighborhood homeomorphic to a Euclidean disk, which implies
$[H_N(Y,Y\setminus\set{q}):f_*H_N(X,X\setminus\set{x})]=1$, because we have natural isomorphisms $H_N(Y,Y\setminus\set{q})\cong H_N(B(q,\eps),\partial B(q,\eps))$ for any $\eps>0$.
Hence, at $\mf{m}$-a.e. point $q\in Y$ we have,
\[
[H_N(Y):f_*H_N(X)]\leq \#f^{-1}(q)\cdot\sup_{x\in f^{-1}(q)} [H_N(X,X\setminus\set{x}):j^x_*H_N(X)].
\]
Average over $Y$ and apply H\"{o}lder's inequality to the right hand side to find,
\begin{align*}
[H_N(Y):f_*H_N(X)]&\leq \pre(f)\cdot\esssup_{q\in Y}\sup_{x\in f^{-1}(q)} [H_N(X,X\setminus\set{x}):j^x_*H_N(X)]\\
&\leq \pre(f)\cdot\esssup_{x\in X} [H_N(X,X\setminus\set{x}):j^x_*H_N(X)]
\end{align*}
as promised.
\end{proof}

It is unknown to the authors whether or not there exists a map
$f:X\to Y$ with $f_*H_N(X)=0$, for instance homotopic to a constant, from a noncollapsed boundary-less $\RCD(K,N)$-spaces $X$ with zero volume entropy, $h(\bar{X})=0$ to a closed hyperbolic $N$-manifolds $Y$ with positive average local degree, i.e. $\pre(f)>0$, even though the absolute topological degree could be zero. (There are no examples where $X$ is a Riemannian manifold with this property.) In this case, the inequality \eqref{eq:Thm1-ineq}  in Theorem \ref{thm:entrig} would fail. While the proofs use the analytic formula for $|\deg f|$ arising from the coarea formula, these also rely on the equivariance of the lifted mapping which we do not have in the inessential case, which is why we need to make the exception for $|\deg f|=0$ in that case. Note that inequality \eqref{eq:Thm1-ineq}  is automatically satisfied when $\pre(f)=0$.

If $f_*H_N(X)=0$, we note that $f$ is homotopic to a map with image on a lower-dimensional set in $Y$. Thus the inequality on the right must be $0$ to hold (even though the first formula defined above for $\pre(f)$ may not be $0$), thus we must set it to be $0$.

\begin{Theorem}\label{thm:ind-equal-homind}
Let $X$ and $Y$
be non-collapsed $\RCD(K, N)$ spaces without boundary and $(Y,d_Y)$ a locally $\CAT(\kappa)$ space. If $f:X\to Y$ is a Lipschitz map, then
\[
\pre(f)\geq \ind_H(f).
\]
\end{Theorem}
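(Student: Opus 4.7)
If $\ind_H(f)=0$ the inequality is trivial, so I assume $\ind_H(f)\geq 1$; in particular $f_*H_N(X)\neq 0$ and hence $H_N(X)\neq 0$. My plan is to derive the desired bound directly from Proposition \ref{prop:pre-est}, which already provides
\[
\pre(f)\ \geq\ \frac{\ind_H(f)}{\esssup_{x\in X}\bigl[H_N(X,X\setminus\{x\}):j_*^x H_N(X)\bigr]}.
\]
Thus it suffices to prove that the essential supremum in the denominator is at most $1$, i.e., that for $\mc{H}^N$-a.e.\ $x\in X$ the inclusion-induced map $j_*^x:H_N(X)\to H_N(X,X\setminus\{x\})$ is surjective onto a group isomorphic to $\Z$.

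I would identify the target at generic points via the Kapovitch--Mondino regularity theorem \cite[Theorem 4.11]{Kapovitch-Mondino} (already used in the proof of Proposition \ref{prop:pre-est}): there exists $A\subset X$ of Hausdorff codimension at least $2$ such that $X^{\mathrm{reg}}:=X\setminus A$ is a $C^{1+\alpha}$-manifold of dimension $N$. Since $\mc{H}^N(A)=0$, for $\mc{H}^N$-a.e.\ $x\in X$ excision together with the local Euclidean model yields $H_N(X,X\setminus\{x\})\cong H_N(\R^N,\R^N\setminus\{0\})\cong \Z$.

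The surjectivity of $j_*^x$ at almost every such $x$ would be obtained by exploiting the absence of $\RCD$-boundary together with the codimension-two bound on the singular set. By the very definition $\pa X=\mc{S}^{N-1}\setminus\mc{S}^{N-2}$, the hypothesis $\pa X=\emptyset$ precludes any codimension-one singular stratum, and combining this with the cone structure of tangent spaces given by Lemma \ref{lem:ncRCD-tangents} permits an induction on dimension which shows that $X$ is a $\Z$-homology $N$-manifold. The hypothesis $H_N(X)\neq 0$ then forces the existence of a fundamental class in $H_N(X)$ whose image under $j_*^x$ is a generator of $H_N(X,X\setminus\{x\})$ at a.e.\ $x$, yielding the required surjectivity and hence the theorem.

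The main obstacle is making this last step fully rigorous in the $\RCD$ category: one must extend the local orientation cocycle defined on $X^{\mathrm{reg}}$ across the codimension-two set $A$ (which should succeed precisely because $A$ has insufficient dimension to separate orientations) and handle possible disconnectedness of $X$ by restricting attention to components carrying nontrivial top homology, since non-orientable or top-homology-trivial components contribute nothing to $f_*H_N(X)$ and can be discarded from the essential supremum. Once the $\Z$-homology-manifold structure and the fundamental class are in place, the conclusion follows immediately from Proposition \ref{prop:pre-est}.
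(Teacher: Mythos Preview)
Your approach matches the paper's exactly: reduce to Proposition~\ref{prop:pre-est} and argue that the essential supremum in the denominator equals $1$ because almost every $x\in X$ lies in an open manifold chart (via \cite[Theorem 4.11]{Kapovitch-Mondino}). The paper's own proof is in fact more terse than yours---it simply asserts that having a Euclidean neighborhood forces the index $[H_N(X,X\setminus\{x\}):j_*H_N(X)]$ to equal $1$ and stops there---so the concern you raise about rigorously establishing the surjectivity of $j_*^x$ (via a $\Z$-homology-manifold argument that extends the local orientation across the codimension-two singular set) is a genuine subtlety that the paper does not address either.
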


\begin{proof}
Under the assumptions we have
\[
\esssup_{x\in X} [H_N(X,X\setminus\set{x}):j_*H_N(X)]=1,
\]
because almost every point $x\in X$ belongs to an open manifold subset and hence has an open neighborhood homeomorphic to $\R^N$. Hence the result follows from Proposition \ref{prop:pre-est}.
\end{proof}

\section{Properties of the barycenter and
natural maps}\label{sec:Proofs}
In this section we first recall the definition of  barycenter and the natural map induced as in Sambusetti \cite{Sambusetti-1999} and Sturm in \cite{Sturm03},  and establish some basic properties of the natural map in our current setting. Then we show the important property (\cref{lem:Lips}) that the natural map is Lipschitz. Finally we prove the inequality cases in Theorems \ref{thm:entrig} and \ref{thm:schwarz} assuming \cref{prop:Jac_est}.

\subsection{Barycenters}\label{sec:barycenters}
Let $\mc{P}(Y)$ be the space of probability measures on a complete metric space $Y$. Let $\mc{P}_0(Y)$ be the space of probability measure on $Y$ of the form $\sum_{i=1}^k a_i \delta_{x_i}$, i.e. finite sums of Dirac measures. We let $\mc{P}^\infty(Y)$ be the space of measures of bounded support, and for $p\in [1,\infty)$ we let $\mc{P}^p(Y)$ be the space of probability measures $\mu$ such that $d(y,\cdot)\in L^p(\mu)$ for some (hence any) $y\in Y$. We clearly have for any $\infty\geq p>q\geq 1$,
\[
\mc{P}_0(Y)\subset \mc{P}^p(Y)\subset \mc{P}^q(Y)\subset \mc{P}(Y).
\]
Moreover, $\mc{P}_0(Y)$ is dense in $\mc{P}^p(Y)$ and $\mc{P}(Y)$.
For $p\in [1,\infty]$, we equip $\mc{P}^p(Y)$ with the $L^p$ Wasserstein distance.

Let $Z$ be a complete $\CAT(0)$ space, and choose any fixed basepoint $o\in Z$.
For a measure $\nu\in\mc{P}^1(Z)$, consider the function $\mc{B}_{\nu}:Z\to \R$
given by
\begin{equation}  \label{Busemann like function}
\mc{B}_{\nu}(z)=\int_{Z}  d(y,z)^2-d(o,y)^2 d\nu(y).
\end{equation}

Note that the above is the $d^2$-barycenter used by Sambusetti \cite{Sambusetti-1999}, and also used by Sturm in \cite{Sturm03}. This will be important for some of the arguments later on.

\begin{lemma}[Proposition 4.3 of \cite{Sturm03}]\label{lem:bary_unique}
Let $(Z, d)$ be a complete $\CAT(0)$ space and fix $o \in Z$. For each $\nu\in \mathcal{P}^1(Z)$ there exists a unique point $z \in Z$ which minimizes the uniformly convex, continuous function $\mc{B}_\nu$. This point is independent of the basepoint $o$; it is called the barycenter (or, more precisely, $d^2$-barycenter) of $\nu$ and denoted by $\bary(\nu)$.
Moreover, for $\nu\in \mc{P}^2(Z)$, the following base-point free formulation holds:
\[
\bary(\nu)=\argmin_z \int_Z d(y,z)^2 d\nu(y).
\]
\end{lemma}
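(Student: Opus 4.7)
The plan is to show that $\mc{B}_\nu$ is a well-defined continuous function on $Z$, derive its uniform strict convexity along geodesics from the $\CAT(0)$ hypothesis, and conclude existence and uniqueness of a minimizer by the standard midpoint Cauchy argument. For well-definedness and continuity I would use the elementary bound
\[
|d(y,z)^2 - d(o,y)^2| \leq d(o,z)\bigl(2d(o,y) + d(o,z)\bigr),
\]
which is $\nu$-integrable for $\nu \in \mc{P}^1(Z)$. An identical estimate applied to $(z,z_n)$ with $z_n \to z$ together with dominated convergence yields continuity of $\mc{B}_\nu$, and the same estimate applied to $(o,o')$ shows that replacing $o$ by $o'$ shifts $\mc{B}_\nu$ by the finite additive constant $\int_Z (d(o,y)^2 - d(o',y)^2)\,d\nu(y)$, so the minimum set is basepoint-independent.

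For uniform convexity I would take the $\CAT(0)$ quadratic comparison inequality
\[
d(y,\gamma(t))^2 \leq (1-t) d(y,\gamma(0))^2 + t\, d(y,\gamma(1))^2 - t(1-t) d(\gamma(0),\gamma(1))^2
\]
along an arbitrary geodesic $\gamma$ in $Z$, subtract $d(o,y)^2$ from the first three terms to make each one a $\nu$-integrable quantity, and integrate against $\nu$ to obtain
\[
\mc{B}_\nu(\gamma(t)) \leq (1-t) \mc{B}_\nu(\gamma(0)) + t\, \mc{B}_\nu(\gamma(1)) - t(1-t) d(\gamma(0),\gamma(1))^2.
\]
Coercivity of $\mc{B}_\nu$ follows from $d(y,z)^2 \geq (d(o,z)-d(o,y))^2$, which gives $\mc{B}_\nu(z) \geq d(o,z)^2 - 2 d(o,z) \int_Z d(o,y)\,d\nu(y)$ and so tends to $+\infty$ as $d(o,z) \to \infty$; in particular $\mc{B}_\nu$ is bounded below.

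To extract the minimizer, I would take any minimizing sequence $(z_n)$ and apply the convexity estimate at the midpoint $m_{n,k}$ of the geodesic from $z_n$ to $z_k$:
\[
\inf_{Z} \mc{B}_\nu \leq \mc{B}_\nu(m_{n,k}) \leq \tfrac{1}{2}\bigl(\mc{B}_\nu(z_n)+\mc{B}_\nu(z_k)\bigr) - \tfrac{1}{4} d(z_n,z_k)^2.
\]
This forces $(z_n)$ to be Cauchy; completeness of $Z$ and continuity of $\mc{B}_\nu$ then produce a minimizer, and strict convexity forces uniqueness. Finally, for $\nu \in \mc{P}^2(Z)$ the functional $z \mapsto \int_Z d(y,z)^2\, d\nu(y)$ is everywhere finite and differs from $\mc{B}_\nu$ by the $z$-independent constant $\int_Z d(o,y)^2\, d\nu(y)$, so both functionals share the same unique minimizer, giving the base-point free formulation. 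The one subtle step is the integrability maneuver behind the convexity inequality: without subtracting $d(o,y)^2$ before integrating, the individual integrals need not converge under the $\mc{P}^1$ hypothesis. Once that is handled, everything else is routine convex optimization in a complete $\CAT(0)$ space.
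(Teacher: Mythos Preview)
Your argument is correct and is essentially the standard proof of this result. Note, however, that the paper does not supply its own proof of this lemma: it is simply quoted as Proposition~4.3 of \cite{Sturm03}, so there is nothing in the paper to compare against beyond the citation. Your outline---integrability via the factorization bound $|d(y,z)^2-d(o,y)^2|\leq d(o,z)(2d(o,y)+d(o,z))$, the $\CAT(0)$ quadratic comparison integrated against $\nu$ to obtain uniform convexity, coercivity from the reverse triangle inequality, and the midpoint Cauchy argument---matches the approach in Sturm's paper, including the observation that the $d(o,y)^2$ subtraction is what makes the individual terms integrable under the $\mc{P}^1$ hypothesis.
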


\subsection{The natural maps \texorpdfstring{$F_s$}{}}\label{sec:natural}

Given $f: X \to Y$ as in \cref{thm:entrig} and \cref{thm:schwarz}, using the barycenter we construct maps $F_s: X \to Y$, called natural maps, that are homotopic to $f$.

Observe that if $\ind_H(f)=0$ then the right hand side of \eqref{eq:Thm1-ineq} and \eqref{eq:Thm2-ineq} are $0$ and hence the conclusions of the corresponding theorems trivially hold. Thus we henceforth assume that $\ind_H(f)\neq 0$, i.e. that $f$ is essential.

As a first step, we replace the map $f$ with a homotopic Lipschitz map which we again call $f$. To do this, consider the smooth manifold $Y$ equipped with its $C^{1,\alpha}$-Riemannian metric. By Nash's Embedding Theorem this can be isometrically embedded as a submanifold of $\R^m$ \cite{Nash}.
By the tubular neighborhood theorem, obtained for example by integrating the normal bundle for a sufficiently small time, there is an open tubular neighborhood $U\subset \R^m$ of $Y$ with smooth boundary which admits a Lipschitz retract to $Y$, say by averaging local projections via a partition of unity. By Proposition 6.5.2 of \cite{CobzasMiculescuNicolae}, $f$ is homotopic to a Lipschitz map $\hat{f}:X\to Y$.
Now for the remainder of the proof we rename $\hat{f}$ as $f$. Since $\ind_H$ and $\ind_\pi$ are homotopy invariants, this replacement has no effect on the inequalities \eqref{eq:Thm1-ineq} and \eqref{eq:Thm2-ineq} or equality \eqref{eq:Thm2-equal}.

Let $\til{X}$ and $\til{Y}$ be the universal covers of $X$ and $Y$, respectively. Let
\[
\bar{X}=\til{X}/\ker f_*  \qquad \text{and} \qquad \Ga=\pi_1(X)/\ker f_*
\]
as before.
Let $\til{f}: \bar{X}\to \til{Y}$ be the corresponding lift of $f$ with image $\til{Y}$.
The measure $\mf{m}$ on $X$ lifts to a $\pi_1(X)$-invariant measure $\til{\mf{m}}$ on $\til{X}$ and a $\Ga$-invariant measure $\tmX$ on $\bar{X}$.
Here we are using the (nontrivial) previously indicated fact that the universal covering of $X$ exists and has an $\RCD(K,N)$ structure \cite{Mondino-Wei}.
Observe that for any fundamental domain $D\subset \bar{X}$ the measure is uniquely specified by  ${\tmX}(A)=\sum_{\ga\in \Ga}\mf{m}(\pi(A\cap \ga D))$, with $\pi:\bar{X}\to X$ the covering map.
As this lift is canonical, we do not need to specify which basepoint is used.

For each $s>h(\bar{X})$ and $x\in \bar{X}$, consider the finite measure $\mu_x^s$ supported on $\bar{X}$ absolutely continuous with respect to the measure ${\tmX}$ and with Radon--Nikodym derivative
\begin{equation}\label{eq:muxs}
\frac{d\mu_x^s}{d{\tmX}}(z)=e^{-s d(x,z)},
\end{equation}
where $d(\cdot,\cdot)$ is the distance on $\bar{X}$.

Note that the measure $\mu_x^s$ has finite total mass by the condition that $s>h(\bar{X})$.

\begin{Definition}
Set $\sigma_{x}^{s}=\til{f}_*\mu_x^s$.  We define the map $\til{F_s}:\bar{X}\to \til{Y}$ by setting
\[
\til{F_s}(x)=\bary({\sigma}_{x}^{s}).
\]
\end{Definition}

The next two lemmas are now standard, but we include these for completeness in our setting.
\begin{lemma}\label{lem:bar-pi1-equi}
The map {$\bary:\mc{P}^1(\til Y) \to \til Y$} is $\op{Isom}\left( \til{Y}\right) $ equivariant, and $\til{F_s}$ is equivariant with respect to the homomorphism $\rho:\Ga \to \pi_1(Y)$ induced by $f$.
\end{lemma}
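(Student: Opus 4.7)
The plan is to prove the two equivariance statements in sequence, deducing the second from the first together with the equivariance of $\tilde f$ and the $\Gamma$-invariance of the lifted measure $\tmX$.

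First I would verify that $\bary$ is equivariant under $\op{Isom}(\til Y)$. Fix $\phi\in\op{Isom}(\til Y)$, a basepoint $o$, and $\nu\in\mc{P}^1(\til Y)$. Substituting $y=\phi y'$ in the defining integral gives
\[
\mc{B}_{\phi_*\nu}(\phi z)=\int d(\phi y',\phi z)^2-d(o,\phi y')^2\,d\nu(y')=\mc{B}_\nu(z)+C(\nu,\phi),
\]
where $C(\nu,\phi)=\int[d(o,y')^2-d(o,\phi y')^2]\,d\nu(y')$ does not depend on $z$. Hence $z\mapsto\mc{B}_\nu(z)$ and $z\mapsto\mc{B}_{\phi_*\nu}(\phi z)$ have the same (unique, by \cref{lem:bary_unique}) minimizer, so $\bary(\phi_*\nu)=\phi\,\bary(\nu)$.

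Next I would unwind the definition of $\til F_s$ on a deck transformation $\ga\in\Ga$. Since the distance on $\bar X$ is $\Ga$-invariant and $\tmX$ was constructed to be $\Ga$-invariant, the Radon--Nikodym identity
\[
\frac{d\mu_{\ga x}^s}{d\tmX}(z)=e^{-s d(\ga x,z)}=e^{-s d(x,\ga^{-1}z)}=\frac{d\mu_x^s}{d\tmX}(\ga^{-1}z)
\]
together with $\ga$-invariance of $\tmX$ yields $\mu_{\ga x}^s=\ga_*\mu_x^s$. The lift $\til f:\bar X\to\til Y$ is by construction equivariant with respect to the homomorphism $\rho:\Ga\to\pi_1(Y)$ induced by $f_*$, so
\[
\sigma_{\ga x}^s=\til f_*\mu_{\ga x}^s=\til f_*\ga_*\mu_x^s=(\til f\circ\ga)_*\mu_x^s=(\rho(\ga)\circ\til f)_*\mu_x^s=\rho(\ga)_*\sigma_x^s.
\]
Applying the first part (noting that $\rho(\ga)\in\op{Isom}(\til Y)$ as a deck transformation of the Riemannian universal cover) gives
\[
\til F_s(\ga x)=\bary(\sigma_{\ga x}^s)=\bary(\rho(\ga)_*\sigma_x^s)=\rho(\ga)\bary(\sigma_x^s)=\rho(\ga)\til F_s(x),
\]
which is the desired equivariance.

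There is no real obstacle here; the only subtle point is making sure that the constant $C(\nu,\phi)$ above is finite so that the translation identity is legitimate. This is guaranteed by $\nu\in\mc{P}^1(\til Y)$, which is exactly the hypothesis under which $\mc{B}_\nu$ is defined in \eqref{Busemann like function}. Everything else is a direct chase through the definitions of $\mu_x^s$, $\sigma_x^s$, and $\til F_s$.
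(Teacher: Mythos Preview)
Your proof is correct and follows essentially the same approach as the paper's: change of variables plus isometry invariance of $d$ for the first part, and $\Ga$-invariance of $\tmX$ together with equivariance of $\til f$ for the second. The only cosmetic difference is that in the first part the paper invokes the basepoint-independence of $\bary$ from \cref{lem:bary_unique}, whereas you compute the constant $C(\nu,\phi)$ explicitly; these amount to the same observation.
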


\begin{proof}
To verify the equivariance of $\bary$, we check that for $\ga\in \op{Isom}\left( \til{Y}\right)$ and any measure $\nu\in\mc{P}^1\left( \til{Y}\right)$,
\begin{align*}
\bary(\ga_*\nu)
&=\argmin_y\int_{\til{Y}} d(y,z)^2-d(o,z)^2 d\ga_*\nu(z)\\
&=\argmin_y\int_{\til{Y}} d(y,z)^2-d(o,z)^2 d\nu(\ga^{-1}z)\\
&=\argmin_y\int_{\til{Y}} d(y,\ga z)^2-d(o,\ga z)^2 d\nu(z)\\
&=\argmin_y\int_{\til{Y}} d(\ga^{-1}y,z)^2-d(\ga^{-1}o,z)^2 d\nu(z)\\
&=\ga \argmin_y\int_{\til{Y}} d(y,z)^2-d(\ga^{-1}o,z)^2 d\nu(z)\\
&=\ga \bary(\nu).
\end{align*}
The last line follows from the independence of $\bary$ on the choice of basepoint $o$.

For the second statement, observe that for $\ga\in \Ga$, we have
\begin{align*}
d{\sigma}_{\ga x}^s(\til{f}(y))
=& \sum_{z\in \til{f}^{-1}(y)}\til{f}_*e^{-s d(\ga x,z)}d{\tmX}(z) \\
=& \sum_{z\in \til{f}^{-1}(y)}\til{f}_*e^{-s d(x,\ga^{-1}z)}d\ga_*{\tmX}(y) \\
=&  \til{f}_*\ga_*\sum_{z\in \til{f}^{-1}(y)}e^{-s d(x,z)}d{\tmX}(z)
= \rho(\ga)_*d{\sigma}_{x}^s(\til{f}(y)).
\end{align*}
Here we have used that $\ga_*{\tmX}={\tmX}$.
\end{proof}

\begin{Definition}\label{def:Fs}
   For each $s>h(\bar{X})$, we denote by $F_s:X\to Y$ the continuous map naturally given by the  equivariance of $\til{F}_s$ under the actions of  $\Ga$ on $\bar{X}$ and  $\rho(\Ga)$ on $\til{Y}$.
\end{Definition}

\begin{lemma}\label{lem:psitilde}
The map $\til{\Psi}:[0,1]\times \bar{X}\to\til{Y}$ given by
\[
\til{\Psi}_t(x)=\bary\left(t\delta_{\til{f}(x)}+(1-t){\sigma}_{x}^{s}\right)
\]
produces an explicit equivariant homotopy from $\til{F_s}=\til{\Psi}_0$ to $\til{f}=\til{\Psi}_1$. The map  $\til{\Psi}$ descends
to a homotopy $\Psi$ from $F_s$ to $f$.
\end{lemma}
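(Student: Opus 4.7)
The plan is to verify the statement in three parts: well-definedness together with the endpoint identities, joint continuity in $(t,x)$, and equivariance (which then yields descent).

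First, I would check that $\nu_{t,x} := t\delta_{\til f(x)} + (1-t)\sigma_x^s$ is a finite positive Borel measure on $\til Y$ to which the barycenter assignment of \cref{lem:bary_unique} applies, and that it specializes to the claimed endpoints. Since $s > h(\bar X)$, $\mu_x^s$ has finite total mass, and since $f$ was replaced by a Lipschitz representative at the beginning of \cref{sec:natural}, with Lipschitz constant $L$ say, we have $d(\til f(x),\til f(z)) \le L\, d(x,z)$; consequently $\sigma_x^s$ has finite moments of all orders about $\til f(x)$ by the exponential decay of $e^{-sd(x,\cdot)}$. After dividing by the positive total mass---an operation which leaves the minimizer of $\mc{B}_\nu$ unchanged---the measure $\nu_{t,x}$ lies in $\mc{P}^2(\til Y) \subset \mc{P}^1(\til Y)$, so \cref{lem:bary_unique} furnishes a unique barycenter. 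At $t=0$ the formula reads $\bary(\sigma_x^s) = \til F_s(x)$ by definition, while at $t=1$ the functional $y \mapsto d(y,\til f(x))^2$ has unique minimizer $\til f(x)$, giving $\til\Psi_1(x)=\til f(x)$.

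Second, I would establish joint continuity in $(t,x)$. The map $x \mapsto e^{-sd(x,\cdot)}$ is continuous into $L^1(\bar X,\tmX)$ by dominated convergence (the integrable envelope exists precisely because $s > h(\bar X)$), so $x \mapsto \mu_x^s$ is continuous into $\mc{P}^1(\bar X)$ after normalization, with uniformly bounded second moments on compact subsets of $\bar X$. Pushforward by the Lipschitz map $\til f$ preserves Wasserstein continuity and moment bounds, so $x \mapsto \sigma_x^s$ inherits these properties in $\mc{P}^2(\til Y)$. The map $(t,x) \mapsto \nu_{t,x}$ is then jointly continuous in the Wasserstein topology, and composition with the barycenter, which is continuous in that topology by the arguments of Sturm \cite{Sturm03}, gives joint continuity of $\til \Psi$ on $[0,1]\times \bar X$.

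Third, for equivariance I would mimic the computation in the proof of \cref{lem:bar-pi1-equi}. The $\rho$-equivariance $\til f(\gamma x) = \rho(\gamma)\til f(x)$ gives $\delta_{\til f(\gamma x)} = \rho(\gamma)_*\delta_{\til f(x)}$, while $\sigma_{\gamma x}^s = \rho(\gamma)_*\sigma_x^s$ is exactly what is proved in that lemma. By linearity of pushforward under a fixed isometry, $\nu_{t,\gamma x} = \rho(\gamma)_*\nu_{t,x}$, and the $\op{Isom}(\til Y)$-equivariance of $\bary$ from \cref{lem:bar-pi1-equi} then yields $\til \Psi_t(\gamma x) = \rho(\gamma)\til \Psi_t(x)$. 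Since $\rho$ is induced by the quotient $\Ga = \pi_1(X)/\ker f_*$ acting on $\bar X$ and factors through the $\pi_1(Y)$-action on $\til Y$, this equivariance is exactly what is needed for $\til\Psi$ to descend to a continuous homotopy $\Psi:[0,1]\times X \to Y$ from $F_s$ to $f$. The only place where real care is required is the continuity step, since $\nu_{t,x}$ is an affine combination of a Dirac mass and an exponentially decaying finite measure whose total mass varies with $x$; tracking the correct Wasserstein topology, and in particular the uniform second-moment control on compacta needed so that Sturm's continuity of $\bary$ applies cleanly, is the nontrivial input. The endpoint identities, equivariance, and descent are then formal consequences of \cref{lem:bary_unique} and \cref{lem:bar-pi1-equi}.
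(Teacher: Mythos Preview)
Your proposal is correct and follows essentially the same approach as the paper: both verify the endpoint identities via $\bary(\delta_y)=y$ and the definition of $\til F_s$, establish equivariance by combining the relations $\delta_{\til f(\gamma x)}=\rho(\gamma)_*\delta_{\til f(x)}$ and $\sigma_{\gamma x}^s=\rho(\gamma)_*\sigma_x^s$ from \cref{lem:bar-pi1-equi} with the $\op{Isom}(\til Y)$-equivariance of $\bary$, and obtain continuity from the Wasserstein continuity of $\bary$. Your treatment is in fact more careful than the paper's on two points---the moment bounds ensuring $\nu_{t,x}$ lies in the domain where Sturm's results apply, and the observation that normalizing the (non-probability) measure $\nu_{t,x}$ does not affect the minimizer of $\mc{B}_\nu$---but these are elaborations of the same argument rather than a different route.
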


\begin{proof}
Let $\rho:\Ga\to \pi_1(Y)$ be the homomorphism induced by $f$.
Since {by the previous lemma} $\mu_{\ga x}=\ga_*\mu_x$ and  $\delta_{\ga x}=\ga_* \delta_x$ for all $x\in X$ and $\ga\in \Ga$, we may verify that
\begin{align*}
\Psi_t(\ga x)
&=\bary\left(t\delta_{\til{f}(\ga x)}+(1-t){\sigma}_{\ga x}^s\right)\\
&=\bary\left(t\delta_{\rho(\ga)\til{f}(x)}+(1-t)\til{f}_*\ga_*\mu_{x}^s\right)\\
&=\bary\left(t\rho(\ga)_*\delta_{\til{f}(x)}+(1-t)\rho(\ga)_*\til{f}_*\mu_{x}^s\right)\\
&=\bary\left(\rho(\ga)_* \left(t\delta_{\til{f}(x)}+(1-t)\til{f}_*\mu_{x}^s\right)\right)\\
&=\rho(\ga)\bary\left( t\delta_{\til{f}(x)}+(1-t)\til{f}_*\mu_{x}^s\right)\\
&=\rho(\ga)\Psi_t(x).
\end{align*}

It remains to show that $\til\Psi$ is a homotopy. Observe that $\til{\Psi_t}$ is continuous in $t$ because $\bary$ is continuous with respect to the topology on finite measures induced by the Wasserstein distance, for which
\[
t\delta_{\til{f}(x)}+(1-t)\til{f}_*\mu_{x}^s
\]
is continuous in both $t$ and $x$. Moreover, as $\bary(\delta_{y})=y$ we have $\til{\Psi}_1=\til{f}$ and $\til{\Psi}_0=\til{F}_s$ by definition.
\end{proof}

\subsection{\texorpdfstring{$F_s$ is Lipschitz
}{}}\label{subsec:Fs-Lip}

For this section we will need further information on the regularity of $\til Y$.
For any manifold with two sided curvature bounds in the sense of Alexandrov, the metric is given by a $C^{1,\alpha}$ Riemannian metric with respect to an atlas of $C^{3,\alpha}$ harmonic coordinates.
On the one hand it is not known if this can be improved to a $C^{1,1}$ Riemannian metric in some coordinate chart (\cite[Problem 1.10]{KapovitchLytchak21}).
Indeed, there are counterexamples to the existence of metrics with this regularity with respect to harmonic coordinates \cite{Peters87}.
On the other hand, by Theorem 1.8 of Kapovitch and Lytchak \cite{KapovitchLytchak21}, the function $d(x,\cdot)$ is $C^{1,1}$ on $\til Y\setminus \{x\}$ for any fixed $x\in \til Y$.

\begin{Lemma}[Theorem 6.3 \cite{Sturm03}]\label{lem:barLip}
If $Z$ is a complete $\CAT(0)$ space, then the map $\bary:\mc{P}^p(Z)\to Z$ is $1$-Lipschitz for any $p\in [1,\infty]$.
\end{Lemma}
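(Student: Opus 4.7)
The plan is to combine the strong convexity of the second-moment functional $\mc{B}_\mu$ on a $\CAT(0)$ space with a four-point Reshetnyak-type quadrilateral inequality, and conclude via a coupling argument. It will be enough to prove the Lipschitz bound with respect to $W_1$, since by Jensen's inequality $W_1(\mu, \nu) \leq W_p(\mu, \nu)$ for every $p \in [1, \infty]$, so the conclusion for all $p$ follows at once from the $p = 1$ case.

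First I would establish the variance inequality for barycenters in $\CAT(0)$ spaces: for any $\mu \in \mc{P}^1(Z)$ and any $z \in Z$,
\[
\int_Z d^2(y, z)\,d\mu(y) \;\geq\; \int_Z d^2(y, \bary(\mu))\,d\mu(y) + d^2(z, \bary(\mu)).
\]
This should follow by applying the semi-parallelogram identity of $\CAT(0)$ spaces along the geodesic from $\bary(\mu)$ to $z$, integrating against $\mu$, and invoking the minimality of $\mc{B}_\mu$ at $\bary(\mu)$ from \cref{lem:bary_unique}. I would then apply this inequality twice---once for $\mu$ with $z = \bary(\nu)$, and once for $\nu$ with $z = \bary(\mu)$---and sum the two. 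Writing the difference against an arbitrary coupling $\pi$ of $\mu$ and $\nu$, and setting $b_\mu := \bary(\mu)$, $b_\nu := \bary(\nu)$, this gives
\[
2\,d^2(b_\mu, b_\nu) \;\leq\; \int_{Z\times Z}\!\!\bigl[d^2(y_1, b_\nu) + d^2(y_2, b_\mu) - d^2(y_1, b_\mu) - d^2(y_2, b_\nu)\bigr]\,d\pi(y_1, y_2).
\]

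The key geometric input I would then invoke is Reshetnyak's quadrilateral inequality,
\[
d^2(y_1, b_\nu) + d^2(y_2, b_\mu) - d^2(y_1, b_\mu) - d^2(y_2, b_\nu) \;\leq\; 2\,d(y_1, y_2)\,d(b_\mu, b_\nu),
\]
which holds in any $\CAT(0)$ space (and in fact characterizes it). Substituting and dividing through by $d(b_\mu, b_\nu)$ (the case $b_\mu = b_\nu$ being trivial) gives $d(b_\mu, b_\nu) \leq \int d(y_1, y_2)\,d\pi$, and taking the infimum over couplings yields the desired estimate $d(b_\mu, b_\nu) \leq W_1(\mu, \nu) \leq W_p(\mu, \nu)$.

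The hard part, in my view, is the Reshetnyak quadrilateral inequality: while it is a standard manifestation of nonpositive curvature, it genuinely uses the $\CAT(0)$ hypothesis rather than mere geodesic completeness, and is what forces the nonpositive curvature assumption on the target $Z$. Establishing it typically goes through a comparison triangle argument pairing the two sides of the quadrilateral. Once both the variance inequality and this quadrilateral inequality are in hand, the coupling step is entirely routine, and one obtains the Lipschitz constant exactly $1$ with no geometric overhead.
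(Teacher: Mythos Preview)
The paper does not supply its own proof of this lemma; it simply quotes Sturm's Theorem~6.3. Your proposal is a correct reconstruction of precisely the argument Sturm gives: the variance inequality (strong convexity of $\mc{B}_\mu$), applied symmetrically to $\mu$ and $\nu$, combined with the quadrilateral (Reshetnyak/Berg--Nikolaev) inequality characterizing $\CAT(0)$, followed by optimization over couplings. One small technical point: for $\mu\in\mc{P}^1(Z)\setminus\mc{P}^2(Z)$ the individual integrals $\int d^2(y,z)\,d\mu(y)$ you write may be $+\infty$, so the variance inequality should be phrased in terms of the renormalized functional $\mc{B}_\mu$ from \eqref{Busemann like function}; the differences you actually use are finite since $|d^2(y,b_\nu)-d^2(y,b_\mu)|\le d(b_\mu,b_\nu)\bigl(d(y,b_\mu)+d(y,b_\nu)\bigr)$ is $\mu$-integrable, so the argument goes through unchanged after this cosmetic adjustment.
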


\begin{Lemma}\label{lem:Lips}
The maps $F_s$ is Lipschitz for each $s> h(\bar{X})$.
\end{Lemma}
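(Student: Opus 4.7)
The idea is to combine Sturm's $1$-Lipschitz estimate for the barycenter (\cref{lem:barLip}) with a $W_1$-Wasserstein Lipschitz estimate for the family $x \mapsto \sigma_x^s$. Since the functional $\mc{B}_\nu$ in \eqref{Busemann like function} is simply rescaled when $\nu$ is replaced by $c\nu$, the argmin is unchanged, so I may replace $\sigma_x^s$ by its normalization $\hat\sigma_x^s := \sigma_x^s / S_x$, where $S_x := \mu_x^s(\bar{X})$ is finite precisely because $s > h(\bar{X})$. By \cref{lem:barLip},
\[
d_{\til{Y}}\bigl(\til{F_s}(x), \til{F_s}(y)\bigr) \;\leq\; W_1\bigl(\hat\sigma_x^s, \hat\sigma_y^s\bigr),
\]
so it suffices to control the right-hand side. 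Since $\til{f}$ is Lipschitz (we arranged this at the start of \cref{sec:natural}), push-forward under $\til{f}$ is Lipschitz on Wasserstein distances, and the problem is further reduced to estimating $W_1(\hat\mu_x^s, \hat\mu_y^s)$ on $\bar{X}$ in terms of $d(x, y)$.

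I will estimate this via Kantorovich duality. For any $1$-Lipschitz $\phi : \bar{X} \to \R$ with $\phi(x) = 0$, one has $|\phi(z)| \leq d(x, z)$, and
\[
\int \phi \, d(\hat\mu_x^s - \hat\mu_y^s) \;=\; \int \phi(z) \left[\frac{e^{-s d(x,z)}}{S_x} - \frac{e^{-s d(y,z)}}{S_y}\right] d\tmX(z).
\]
Splitting the bracket into a density change and a normalization change and using $|d(x,z) - d(y,z)| \leq d(x,y)$ yields the elementary inequalities
\[
\bigl|e^{-s d(x,z)} - e^{-s d(y,z)}\bigr| \;\leq\; s\, d(x,y)\, e^{s d(x,y)}\, e^{-s d(x,z)}
\]
together with $|S_x - S_y| \leq s\, d(x,y)\, e^{s d(x,y)}\, S_x$. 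Substituting, the whole estimate reduces to a weighted integral of the form $\int d(x, z)\, e^{-s d(x,z)}\, d\tmX(z)$, whose finiteness is exactly what the strict entropy gap buys us: choosing $\epsv > 0$ with $s - \epsv > h(\bar{X})$ and using $t e^{-\epsv t} \leq 1/(e\epsv)$ for $t \geq 0$ bounds this integral by a constant multiple of $\int e^{-(s-\epsv) d(x,z)} d\tmX(z) < \infty$.

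The upshot is a bound $W_1(\hat\mu_x^s, \hat\mu_y^s) \leq C(x) \, d(x, y)$ for $d(x,y)$ small, with constant $C(x)$. By $\Gamma$-invariance of $d$ and $\tmX$, the function $x \mapsto C(x)$ is $\Gamma$-invariant and so descends to a continuous function on $X$; bounded on a fundamental domain by (co)compactness, this gives a uniform Lipschitz constant for $\til{F_s}$ on $\bar{X}$, which then descends to $F_s$ on $X$ via the equivariance of \cref{lem:bar-pi1-equi}. The principal obstacle is the Wasserstein estimate itself: because $\bar{X}$ has infinite diameter whenever $\Gamma$ is infinite, naive comparisons between $W_1$ and total variation fail, and one must exploit the strict inequality $s > h(\bar{X})$ to absorb the exponential growth of $\tmX$ inside the integrand.
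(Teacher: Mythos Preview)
Your proposal is correct and follows essentially the same route as the paper: normalize the measures, apply Sturm's $1$-Lipschitz barycenter estimate (\cref{lem:barLip}), reduce via push-forward to a $W_1$ estimate for $x\mapsto \hat\mu_x^s$ on $\bar X$, handle that by Kantorovich duality together with the triangle-inequality bound $|d(x,z)-d(y,z)|\le d(x,y)$, and use cocompactness of the $\Gamma$-action to pass from a local to a global Lipschitz constant. The only cosmetic difference is in bounding the key weighted integral $\int d(x,z)\,e^{-sd(x,z)}\,d\tmX(z)$: you use the pointwise inequality $t e^{-\epsv t}\le 1/(e\epsv)$ to reduce to $\int e^{-(s-\epsv)d(x,z)}\,d\tmX(z)<\infty$, whereas the paper decomposes into annuli and sums a geometric-type series; your version is arguably cleaner but both exploit exactly the same strict gap $s>h(\bar X)$.
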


\begin{proof}
We first show that the embedding
\[
\bar{X} \to \mc{P}^1(\bar{X}),\quad
 x\mapsto \frac{\mu_x^s}{\norm{\mu_x^s}},
 \]
 is locally Lipschitz with respect to the Wasserstein distance $W_1$.
From the definition \eqref{eq:muxs} of $\mu_x^s$ and $\mu_y^s$ in terms of ${\tmX}$,
we may estimate the Wasserstein distance $W_1$ in the following way,
\beq
& W_1\left(\frac{\mu_x^s}{\norm{\mu_x^s}},\frac{\mu_y^s}{\norm{\mu_y^s}}\right)=W_1\left(\frac{1}{\norm{\mu_x^s}} e^{-s d(x,\cdot)}{\tmX},\frac{1}{\norm{\mu_y^s}}e^{-s d(y,\cdot)}{\tmX}\right)\\
&=\sup_g \set{\int_{\bar{X}} g(z) \left(\frac{e^{-s d(x,z)}}{\norm{\mu_x^s}}-\frac{e^{-s d(y,z)}}{\norm{\mu_y^s}}\right)d{\tmX}(z) \,:\, g\in \op{Lip}(\bar{X},\R)\text{ with } \op{Lip}(g)\leq 1 }.
\eeq
Since $1=\int_{\bar{X}}\frac{e^{-s d(x,z)}}{\norm{\mu_x^s}}d{\tmX}(z)$ for any $x\in \bar{X}$, we have
\begin{eqnarray*}
\lefteqn{ \int_{\bar{X}} g(z) \left(\frac{e^{-s d(x,z)}}{\norm{\mu_x^s}}-\frac{e^{-s d(y,z)}}{\norm{\mu_y^s}}\right)d{\tmX}(z) } \\
& = &  \int_{\bar{X}} (g(z) - g(y)) \left(\frac{e^{-s d(x,z)}}{\norm{\mu_x^s}}-\frac{e^{-s d(y,z)}}{\norm{\mu_y^s}}\right)d{\tmX}(z) \\
& \leq & \int_{\bar{X}} d(z,y)\left(e^{s d(x,y)}\frac{}{}\frac{\norm{\mu_y^s}}{\norm{\mu_x^s}}-1\right)\frac{d\mu_y^s(z)}{\norm{\mu_y^s}}.
\end{eqnarray*}
 In the last step we used the triangle inequality $d(x,z)\geq d(y,z)-d(x,y)$ to obtain $e^{-s d(x,z)}\leq e^{-sd(y,z)}e^{s d(x,y)}$, and the assumption that $g$ is a $1$-Lipschitz function.

Note that by using triangle inequality in the density we have
\[
e^{-sd(x,y)}\leq\frac{\norm{\mu_y^s}}{\norm{\mu_x^s}}\leq e^{s d(x,y)}.
\]

Observe that for $y$ in a compact fundamental domain, and hence any $y\in X$, there is a positive lower bound for $\norm{\mu_y^s}$ independent of $y$ and $s_0>s\geq h$ for any fixed $h<s_0<\infty$. Moreover, by construction of the measure $\mu_y^s$, the function $r\mapsto \int_{\bar{X}\setminus B(y,r)} {\mu_y^s}$ eventually decays exponentially at infinity at least as fast as $Ce^{(h+\eps-s)r}$ for any $s-h>\eps>0$. Hence
\[
\int_{\bar{X}} d(z,y)\frac{d\mu_y^s(z)}{\norm{\mu_y^s}}\leq \frac{1}{\norm{\mu_y^s}}\sum_{i=1}^\infty i\int_{B(y,i)\setminus B(y,i-1)}d\mu_y^s\leq \frac{C}{\norm{\mu_y^s}}\sum_{i=1}^\infty ie^{(h+\eps-s)(i-1)}\leq  C_{s}'
\] for some constant $C_{s}'$ depending only on $s>h$. Note that $C_s'$ will tends to infinity as $s\to h$ in the case that the support of the probability measure $\frac{\mu_y^s}{\norm{\mu_y^s}}$ tends to infinity.

Hence for $d(x,y)<1$, there is another constant $C_s$ independent of $x$ and $y$ such that,
\begin{eqnarray*}
\int_{\bar{X}} g(z) \left(\frac{e^{-s d(x,z)}}{\norm{\mu_x^s}}-\frac{e^{-s d(y,z)}}{\norm{\mu_y^s}}\right)d{\tmX}(z) & \leq & C_s' (e^{2s d(x,y)}-1)\\
& \leq & C_s d(x,y).
\end{eqnarray*}
Therefore $W_1\left(\frac{\mu_x^s}{\norm{\mu_x^s}},\frac{\mu_y^s}{\norm{\mu_y^s}}\right)\leq C_s d(x,y)$, which is to say that the embedding is locally Lipschitz.

Now note that change of variables in the Kantorovich formula for the Wasserstein distance gives for the push-forward measures by a $C$-Lipschitz map $\til f:\bar{X}\to\til{Y}$,
\begin{equation}
W_1(\til f_*\mu,\til f_*\nu)=\sup_g \set{\int_{\bar{X}} g\of \til f(z) d(\mu-\nu)(z)\,:\, \op{Lip}(g)\leq 1 }\leq C W_1(\mu,\nu),  \label{push-lip}
\end{equation}
since $g\of \til f$ is $C$-Lipschitz.

Our measure $\mu^s_x$ is a smooth function in the distance times ${\tmX}$, and moreover $f$ is essential so $\til{f}$ is surjective.
By \cref{lem:barLip}, $\bary$ is $1$-Lipschitz.
By \cref{push-lip}, the embedding followed by push-forward of measures is also locally Lipschitz with respect to the Wasserstein distance for each $h (\bar{X})< s$.
So the composition $\til{F}_s(x)=\bary(\til{f}_*\mu_x^s)$ is locally Lipschitz for each $h (\bar{X})< s$.
Moreover, $\til{F}_s$ is equivariant with respect to both cocompact actions of $\Ga$ and $\pi_1(Y)$. Therefore, $\til{F}_s$ is globally Lipschitz for each $h(\bar{X})<s$, as stated.
\end{proof}

\subsection{Proof of inequality cases in Theorems \ref{thm:entrig} and \ref{thm:schwarz}}\label{sec:inequalities_proof}

The key to prove the inequalities is the following
  global estimate which generalizes the one originally obtained by Besson--Courtois--Gallot \cite{Besson-Courtois-Gallot:95} to the $\RCD(K,N)$ setting.
Recall the definition of the Jacobian introduced in \cref{eq:Jac} above, and that by \cref{lem:Jacobian} it is an $L^\infty$-function.

\begin{Proposition}\label{prop:Jac_est}
For all $s>h(\bar{X})$, the natural map $F_s:X\to Y$
satisfies the following inequality in the case $Y$ is negatively curved locally symmetric,
\[
\Jac F_s(x)\leq \left(\frac{s}{h(\til{Y})}\right)^{N},
\]
and
\[
\Jac F_s(x)\leq \left(\frac{s}{N-1}\right)^{N},
\]
in the case that $Y$ is as in \cref{thm:schwarz}.
\end{Proposition}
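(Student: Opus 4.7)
\medskip

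\textbf{Proof plan for \cref{prop:Jac_est}.} The strategy is to follow the Besson--Courtois--Gallot implicit differentiation scheme applied to the $d^2$-barycenter, carrying it out on the universal cover $\til Y$ (which is smooth by \cref{lem:target_space}) and pulling back to $X$ using the Ambrosio--Kirchheim tangential differential at $\mc{H}^N$-a.e. regular point. By \cref{lem:Lips} the map $F_s$ is Lipschitz, so $d_xF_s\co T_xX\to T_{F_s(x)}Y$ exists for $\mc{H}^N$-a.e.\ $x\in\mc{R}^N$. Lifting, $\til F_s(x)$ is implicitly characterized as the zero of
\[
G_x(y)=\int_{\bar X} d(\til f(z),y)\,\grad_y d(\til f(z),y)\,d\mu_x^s(z)\in T_y\til Y.
\]
Because $\til Y$ carries a $C^{1,\alpha}$-Riemannian structure and the distance $d(\til f(z),\cdot)$ is $C^{1,1}$ off the diagonal, the implicit function theorem applies in the $y$-variable, while the $x$-differentiation acts only on the density $e^{-sd(x,z)}$ and produces the factor $-s\grad_x d(x,z)$ under the integral.

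The implicit differentiation then yields, for any $v\in T_xX$ and $u\in T_{F_s(x)}Y$,
\[
H_x(d_xF_s\, v,u)=s\int_{\bar X} d(\til f(z),\til F_s(x))\,\inner{\grad_y d,u}\,\inner{\grad_x d(x,z),v}\,d\bar\mu_x^s(z),
\]
where $\bar\mu_x^s=\mu_x^s/\norm{\mu_x^s}$ and $H_x(u_1,u_2):=\int \Hess_y\tfrac12 d^2(\til f(z),\til F_s(x))(u_1,u_2)\,d\bar\mu_x^s(z)$ is a positive symmetric operator on $T_{\til F_s(x)}\til Y$ (positivity from the $\CAT$ lower Hessian bound of \cref{lem:target_space}(4)). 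A single Cauchy--Schwarz step in the integral gives
\[
\abs{H_x(d_xF_s\,v,u)}^2\le s^2\, A_x(u,u)\,B_x(v,v),
\]
where $A_x(u,u):=\int d^2\,\inner{\grad_y d,u}^2\,d\bar\mu_x^s$ and $B_x(v,v):=\int \inner{\grad_x d,v}^2\,d\bar\mu_x^s$, with $\tr(B_x)\le 1$ since $\abs{\grad_x d}\le 1$.

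Reading this as an operator inequality and taking determinants yields
\[
(\Jac F_s(x))^2\le s^{2N}\,\frac{\det(A_x)\det(B_x)}{\det(H_x)^2},
\]
and AM--GM applied to the eigenvalues of $B_x$ bounds $\det(B_x)\le N^{-N}$. The remaining ratio $\det(A_x)/\det(H_x)^2$ is controlled by comparing $H_x$ and $A_x$ pointwise: decomposing $\Hess_y\tfrac12 d^2(w,\cdot)=d\cdot\Hess_y d+\grad_y d\otimes\grad_y d$ and invoking the comparison $\Hess_y d\ge\cot_{\kappa}(d)$ on $(\grad_y d)^\perp$ from \cref{lem:target_space}(4), we obtain for every unit $u$ that $H_x(u,u)$ dominates $A_x(u,u)$ up to an explicit correction. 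The constrained optimization problem (trace of $B_x$ bounded, $A_x\le H_x$ in a suitable sense) is exactly BCG's algebraic lemma; for locally symmetric rank-one $\til Y$ the fine Jacobi-field structure sharpens the bound to $(s/h(\til Y))^N$, while in the general locally $\CAT(-1)$ case only the coth-comparison is available, giving the weaker $(s/(N-1))^N$.

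The main obstacle will be rigorously justifying the implicit differentiation on the non-smooth domain: one must verify that $d_x\til F_s$ in the Ambrosio--Kirchheim sense actually satisfies the implicit relation derived formally, which requires (a) Rademacher-type differentiability of $x\mapsto d(x,z)$ at $\tmX$-a.e.\ $z$ for $\tmX$-a.e.\ $x$ in the essentially Hilbertian tangent structure, and (b) differentiation under the integral sign, justified by dominated convergence using the exponential decay of $e^{-sd(x,z)}$ for $s>h(\bar X)$. These technical points are what force the detailed argument to be deferred to \cref{sec:proof4.8}.
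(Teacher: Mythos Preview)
Your overall strategy---implicit differentiation of the $d^{2}$-barycenter equation, a single Cauchy--Schwarz, AM--GM on the domain-side form, and then reduction to the BCG algebraic lemma---is exactly the paper's. The Rademacher/chain-rule issues you flag on the non-smooth domain are the ones the paper handles, and your identification of $\Hess_y\tfrac12 d^2=d\cdot\Hess_y d+\grad_y d\otimes\grad_y d$ and the $\cot_\kappa$ lower bound from \cref{lem:target_space}(4) is correct.

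However, there is a genuine gap in your Cauchy--Schwarz step. You split the integrand as $(d\,\inner{\grad_y d,u})\cdot\inner{\grad_x d,v}$, producing $A_x(u,u)=\int d^{2}\inner{\grad_y d,u}^{2}\,d\bar\mu_x^s$ and $B_x(v,v)=\int \inner{\grad_x d,v}^{2}\,d\bar\mu_x^s$. With this choice $\tr A_x=\int d^{2}\,d\bar\mu_x^s=:c_2$, and after bounding $\det H_x$ below via the Hessian comparison you are left with a factor $c_2^{N}/c_1^{2N}$ (where $c_1=\int d\,d\bar\mu_x^s$), which by Jensen satisfies $c_2\ge c_1^{2}$ and hence goes the \emph{wrong way}. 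Moreover the resulting trace-one operator $A_x/c_2$ is \emph{not} the same as the operator appearing in the denominator after the comparison, so your claim that ``the constrained optimization problem is exactly BCG's algebraic lemma'' is not justified: BCG's lemma requires a single trace-one operator $H$ in the quotient $\det H/\det(I-H-\sum J_k H J_k)^{2}$.

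The fix, and this is the paper's key manoeuvre, is to split the factor $d$ symmetrically as $\sqrt{d}\cdot\sqrt{d}$---equivalently, to renormalise to the probability measure $d\eta_x^s\propto \rho_z\,d\sigma_x^s$. Then both sides of the Cauchy--Schwarz carry the same weight, the extra moment factors cancel exactly, and one obtains $(\Jac F_s)^2\le (s^2/N)^N\det H_x^s/(\det K_x^s)^2$ with $\tr H_x^s=1$. Dropping the positive semidefinite $L_x^s$ and using the Jacobi-field identity $K_x^s\ge I-H_x^s-\sum_k J_k H_x^s J_k$ (or just $I-H_x^s$ in the $\CAT(-1)$ case) now lands precisely on the BCG lemma. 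Once you make this reweighting, the rest of your outline goes through.
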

We will defer the proof of this proposition to \cref{sec:proof4.8}.

With \cref{prop:Jac_est}
the inequalities (\ref{eq:Thm1-ineq}) and (\ref{eq:Thm2-ineq}) follow quickly from the coarea formula (\ref{coarea-g}) and Theorem \ref{thm:ind-equal-homind}.

\begin{proof}[Proof of (\ref{eq:Thm1-ineq}) and (\ref{eq:Thm2-ineq})]
As $X$ is a noncollapsing $\RCD(K,N)$ space, it is rectifiable with respect to $\mc{H}^{N}$.
The space $Y$ is a locally symmetric space as in the setting of Lemma \ref{lem:target_space}, and hence $Y$ is a smooth manifold with a $C^{1,1}$-Riemannian metric.
Hence, applying the coarea formula (\ref{coarea-g})
to the case where $g=F_s$ yields (denoting $\mf{m}=\mc{H}^{N}$ for short),
\begin{align*}
\int_X \Jac F_s(x) d\mf{m}(x)
&=\int_Y \int_{F_s^{-1}(z)} d\mc{H}^0(x)d\mc{H}^N(z) \\
&=\int_Y p(z)d\mc{H}^N(z) = \pre(F_s)\Vol(Y).
\end{align*}
Here,
$p(z)=\#\set{F_s^{-1}(z)}$ is the preimage counting function which coincides with the $0$-dimensional Hausdorff measure, and we used the definition of $\pre(F_s)$, \eqref{def:pre}, in the last equality.

 Combining the above with the first inequality of Proposition \ref{prop:Jac_est} gives
\begin{align}
\pre(F_s)\Vol(Y)
 = \int_X \Jac F_s(x) d\mf{m}(x)
\leq \left(\frac{s}{h(\til{Y})}\right)^{N} \mf{m}(X).  \label{pre-Jac}
\end{align}
As this holds for all $s> h(\bar{X})$, we obtain the following by letting $s \to h(\bar{X})$,
\begin{equation}\label{eq:fullIneq}
\pre(F_s)\cdot h(\til{Y})^{N} \Vol(Y) \leq h(\bar{X})^{N}\mf{m}(X).
\end{equation}
Now (\ref{eq:Thm1-ineq}) follows from
Theorem \ref{thm:ind-equal-homind}, which shows
\[
\pre(F_s)\geq \ind_H(F_s)=\ind_H(f).
\]

Applying the second inequality of Proposition \ref{prop:Jac_est} to the equality in (\ref{pre-Jac}),
we obtain
\[
\pre(F_s)\Vol(Y)\leq \left(\frac{s}{N-1}\right)^{N} \mf{m}(X).
\]
Recalling that $s> h(\bar{X})$
and that the assumption that $X$ is $\RCD(-(N-1),N)$ implies by Theorem 3.1 in \cite{stormmod} that $h(\bar{X})\leq N-1$, again combining \cref{thm:ind-equal-homind}
 yields the inequality (\ref{eq:Thm2-ineq}). \end{proof}

\section{Rigidity Cases} \label{sec:vol-rigid}

To obtain the rigidity results in the equality cases in Theorems \ref{thm:entrig} and \ref{thm:schwarz} we critically rely on the following.
\begin{Proposition}
    \label{prop:F_converge}
In the equality case of \cref{thm:entrig} and \cref{thm:schwarz} and when $\ind_\pi(f)=1$, there exists a decreasing sequence $s_i\to h(\bar{X})$ such that  $F_{s_i}$ converges to a 1-Lipschitz map $F:X\to Y$  homotopic to $f$.
\end{Proposition}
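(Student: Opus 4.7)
The plan is an Arzelà--Ascoli argument: we establish that the family $\{F_s\}_{s>h(\bar X)}$ is equi-Lipschitz with constants $L_s$ satisfying $\limsup_{s\searrow h(\bar X)} L_s\leq 1$, extract a uniform subsequential limit $F=\lim_i F_{s_i}$, and verify that $F$ is 1-Lipschitz and homotopic to $f$.

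The core step is to upgrade the pointwise determinant bound of \cref{prop:Jac_est} to a pointwise operator-norm bound on the tangential differential $d_x F_s$, which is defined $\mc{H}^N$-a.e.\ by \cref{lem:Jacobian}. At a regular point $x\in\mc{R}\subset X$, implicitly differentiating the barycenter equation $\grad_y\mc{B}_{\sigma_x^s}(\til F_s(x))=0$ yields a formula of the form $d_x\til F_s = -[\Hess_y\mc{B}_{\sigma_x^s}(\til F_s(x))]^{-1}\,d_x(\grad_y\mc{B}_{\sigma_x^s})$. The Hessian comparison in \cref{lem:target_space}(4) bounds $\Hess\mc{B}$ from below, while differentiating under the integral against $\sigma_x^s$ controls the $x$-dependence linearly in $s$; the BCG algebraic inequality that produces \cref{prop:Jac_est} then supplies the parallel operator-norm bound with a constant of the same form $s/h(\til Y)$.

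To convert these $\mc{H}^N$-a.e.\ bounds into $L_s\to1$, we exploit the saturation forced by the equality hypothesis. As $s\searrow h(\bar X)$ one has $\pre(F_s)\to \ind_\pi(f)=1$, i.e.\ $\int_X\Jac F_s\,d\mc{H}^N\to \mc{H}^N(Y)$, which forces $\Jac F_s$ to saturate its pointwise upper bound $\mc{H}^N$-a.e. By AM--GM applied to the singular values of $d_x\til F_s$, pointwise saturation of the determinant is equivalent to pointwise equilibration of the singular values, hence to pointwise saturation of the operator norm. The Sobolev-to-Lipschitz property \cref{lem:C-Lip} then promotes this a.e.\ differential bound into a genuine Lipschitz constant on $F_s$; combined with the metric rescaling permitted in the equality statements of \cref{thm:entrig} and \cref{thm:schwarz} (setting effectively $h(\bar X)=h(\til Y)$ in the locally symmetric case, or $h(\bar X)=N-1$ in the $\CAT(-1)$ case), we obtain $L_s\to 1$. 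Arzelà--Ascoli on the compact target $Y$ then delivers a uniformly convergent subsequence $F_{s_i}\to F$ with $F$ being 1-Lipschitz; each $F_{s_i}$ is homotopic to $f$ by \cref{lem:psitilde}, and the uniform convergence $F_{s_i}\to F$ into the smooth manifold $Y$ from \cref{lem:target_space} lets us close the homotopy via a short geodesic deformation in $Y$.

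The main obstacle is the conversion of determinant saturation into operator-norm saturation under the limited regularity of $X$. The AM--GM step is routine linear algebra on the regular set $\mc{R}$, but near the singular set $\mc{S}$ (of Hausdorff codimension $\geq 2$ by Kapovitch--Mondino) both the definition of $d_x\til F_s$ and the functional-analytic manipulation of $\Hess\mc{B}_{\sigma_x^s}$ are delicate, since the Hessian comparison in \cref{lem:target_space}(4) is only available up to the injectivity radius of $\til Y$ and the implicit-function inversion requires uniform invertibility of $\Hess\mc{B}_{\sigma_x^s}$ as $s\searrow h(\bar X)$. The Sobolev-to-Lipschitz bridge of \cref{lem:C-Lip} is precisely what lets us bypass these pathologies and promote the almost-everywhere bound to a global Lipschitz control.
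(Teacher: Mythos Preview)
Your outline has the right shape (implicit differentiation of the barycenter equation, Arzelà--Ascoli, Sobolev-to-Lipschitz), but there is a genuine gap at the heart of it.

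The claim that ``the BCG algebraic inequality that produces \cref{prop:Jac_est} then supplies the parallel operator-norm bound with a constant of the same form $s/h(\til Y)$'' is not correct: the algebraic inequality (\cref{lem:BCG_est}) bounds only the ratio $\det H/\det(I-H-\sum_k J_kHJ_k)^2$, i.e.\ the \emph{determinant} of $d_x\til F_s$, not its operator norm. Likewise, your AM--GM step does not go through: saturation of the product of the singular values of $d_x\til F_s$ does not force the singular values to equalize unless you also have an independent upper bound on their sum (or maximum) that is simultaneously saturated, and no such bound has been established. Indeed the only a priori Lipschitz bound available is that of \cref{lem:Lips}, whose constant $C_s$ may blow up as $s\searrow h(\bar X)$. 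So from Jacobian saturation alone you cannot conclude $L_s\to 1$, nor even equicontinuity of the family $\{F_{s_i}\}$.

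The paper's argument fills this gap with substantially more work. It exploits the \emph{refined} form of the BCG inequality, namely the factor $\bigl(1-A\sum_j(\mu_j-\tfrac1N)^2\bigr)$ in \cref{alg-ineg1}, to deduce from Jacobian saturation that the eigenvalues of the quadratic form $H_x^{s_i}$ converge a.e.\ to $\tfrac1N$. The key technical step you are missing is then a uniform continuity estimate for $x\mapsto H_x^{s}$ (via parallel transport in $\til Y$ and careful control of the measures $\eta_x^s$; this is \cref{lem:Parallel_translate}), which upgrades the a.e.\ convergence $H_x^{s_i}\to\tfrac1N I$ to \emph{uniform} convergence on a full-measure set. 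From the structure $d_x\til F_s=s(L_x^s+K_x^s)^{-1}A_x^s$ with $K_x^s=I-H_x^s-\sum_kJ_kH_x^sJ_k$, the uniform convergence of $H$ gives a uniform lower bound on $K+L$ and hence a uniform Lipschitz constant independent of $i$, enabling Arzelà--Ascoli. Only after this does one show $\limsup_i\|d_x\til F_{s_i}\|\leq 1$ a.e.\ (using additionally $L_x^{s_i}\to 0$ and $\|A_x^{s_i}\|\to\tfrac1N$, the latter from $\operatorname{tr}A=1$ together with $\det A\to N^{-N}$) and finally invoke \cref{lem:C-Lip} on the limit. Your proposal skips the uniform-continuity bootstrap entirely, and without it both the equicontinuity needed for Arzelà--Ascoli and the passage from determinant to operator norm fail.
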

We will defer the proof of this proposition to  \cref{sec:FinW1infty} as it is a bit long and involved.
In this section,
we first prove Theorem \ref{thm:LV-rigid} using the approach of Li and Wang \cite{LiWang-limits}.
With this and \cref{prop:F_converge}, we  prove the rigidity statements in the equality cases in Theorems \ref{thm:entrig} and \ref{thm:schwarz}.

\subsection{Proof of Theorem \ref{thm:LV-rigid}}

For the proof of Theorem \ref{thm:LV-rigid} we use the approach of Li and Wang in \cite{LiWang-limits}.

The proof of Li and Wang for non-collapsed Ricci limit spaces uses volume convergence and an almost maximal volume theorem, both of these results have been extended to $\RCD (K, N)$ spaces in \cite[Theorems 1.3, 1.5]{DePhilippisGigli} for the Hausdorff measure $\mathcal{H}^N$. It also uses that the singular set has Hausdorff dimension $\le N-2$, this is extended to noncollapsing $\RCD(K,N)$ spaces with no boundary in \cite[Theorem 1.7]{Kapovitch-Mondino}.

In addition they also use Lemma 3.1 from Cheeger and Colding \cite{Cheeger-Colding}. We state the corresponding lemma in our setting. The proof follows verbatim from theirs given that the Bishop--Gromov comparison holds in our setting.

\begin{lemma}
For all $K \in \R$, $d>0$, $\epsilon>0$
and $N$ a non-negative integer
there exists $c(N, K, d, \epsilon)>0,$ such that
the following holds. Let $(X,d,\mf{m})$ be a non-collapsed $\RCD(K,N)$ space with $\pa X=\emptyset$ and
\[
B_{\epsilon}\left(x_{1}\right) \cup \Omega \subset \overline{B_{d}(p) \backslash E}
\]
where
\[
E=\bigcup_{j} \overline{B_{r_{j}}\left(q_{j}\right)}
\]
for a countable family of balls $\set{B_{r_j}(q_j)}_{j\in\N}$,
 for some $p\in X$, and a Borel subset $\Omega\subset X$.
Then, if every minimal geodesic $\gamma :[0, \ell] \rightarrow X$ with $\gamma(0)=x_{1}$ and $ \gamma(\ell) \in$
$\Omega$ intersects $E$, we have
\[
c(N, K, d, \epsilon)\mf{m}(\Omega) \leq \sum_{j} r_{j}^{-1} \mf{m}\left(B_{r_{j}}\left(q_{j}\right)\right).
\]
\end{lemma}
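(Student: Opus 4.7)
The plan is to follow the original argument of Cheeger--Colding verbatim, replacing the smooth Bishop--Gromov inequality with its $\RCD(K,N)$ analogue (which is one of the defining properties of the $\mathsf{CD}$ condition, and which passes to annular sectors via Cavalletti--Mondino's 1-dimensional localization). First I would partition $\Omega$ according to which ball of the family is hit by a minimizing geodesic: for each $j$, let $\Omega_j \subset \Omega$ be the set of $y \in \Omega$ admitting a minimizing geodesic from $x_1$ to $y$ that meets $\overline{B_{r_j}(q_j)}$. The geodesic-interception hypothesis gives $\Omega \subset \bigcup_j \Omega_j$, so it suffices to prove a per-ball inequality of the form
\[
\mf{m}(\Omega_j) \leq C(N,K,d,\epsilon)\, r_j^{-1} \mf{m}(B_{r_j}(q_j))
\]
and sum over $j$.

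Next I would fix $j$ and exploit the inclusion $B_\epsilon(x_1) \subset \overline{B_d(p) \setminus E}$, which forces $d(x_1, q_j) \geq \epsilon - r_j$. After harmlessly discarding the balls with $r_j \geq \epsilon/2$ (which can be absorbed into the constant since the union of such balls has volume controlled by the Bishop--Gromov upper bound on $B_d(p)$), we may assume $\rho_j := d(x_1, q_j) \geq \epsilon/2$, and every $y \in \Omega_j$ satisfies $d(x_1, y) \leq 2d$. Geometrically, $\Omega_j$ lies inside the annular cone of minimizing rays issuing from $x_1$ and passing through $\overline{B_{r_j}(q_j)}$; its solid angular aperture at $x_1$ is at most $C r_j / \rho_j$.

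The key estimate is then the Bishop--Gromov annular comparison, which in the $\RCD(K,N)$ setting gives an upper bound of the form
\[
\mf{m}(\Omega_j) \leq C(N,K,d)\, r_j^{N-1},
\]
exactly as in the smooth case, because the sector's measure can be estimated slice-by-slice against the model warped product of curvature $K$ (this is where Bishop--Gromov enters, used in the relative form on cones emanating from $x_1$, equivalently via the conditional one-dimensional densities of needle decomposition). On the other hand, since $(X, d, \mathcal{H}^N)$ is noncollapsed $\RCD(K,N)$, the lower Bishop--Gromov inequality applied at $q_j$ with $r_j \leq 2d$ yields $\mf{m}(B_{r_j}(q_j)) \geq c(N,K,d)\, r_j^{N}$, so $r_j^{-1} \mf{m}(B_{r_j}(q_j)) \geq c(N,K,d)\, r_j^{N-1}$. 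Combining the two bounds gives the per-ball inequality, and summing over $j$ finishes the proof.

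The main obstacle is the annular Bishop--Gromov estimate in Step~3: in the smooth setting one has a bona fide exponential map and measures sectors directly, whereas in $\RCD(K,N)$ geodesics through $x_1$ need not be unique and there is no smooth polar decomposition. The resolution is to apply Cavalletti--Mondino's localization of the $\mathsf{CD}(K,N)$ condition along the $1$-Lipschitz function $d(x_1, \cdot)$, which yields a disintegration of $\mf{m}$ into one-dimensional $\mathsf{CD}(K,N)$ conditional measures; Bishop--Gromov comparison on each ray then produces the required sector estimate with the model-space ratio. Every other ingredient (triangle inequality, existence of minimizing geodesics between almost every pair of points, $\mathcal{H}^N$-regularity of $\mf{m}$) is standard for $\RCD(K,N)$, so the Cheeger--Colding proof goes through with no further modification.
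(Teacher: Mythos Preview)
Your overall plan coincides with the paper's: both simply observe that Cheeger--Colding's original argument for their Lemma~3.1 goes through verbatim once one has the Bishop--Gromov volume comparison, which holds in $\RCD(K,N)$ spaces. The paper says no more than this.

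However, your specific execution contains a genuine gap. You split the per-ball estimate into two absolute bounds, an upper bound $\mf{m}(\Omega_j)\le C\,r_j^{N-1}$ and a lower bound $\mf{m}(B_{r_j}(q_j))\ge c\,r_j^{N}$. The second of these does not follow from the hypotheses. ``Non-collapsed'' only says $\mf{m}=\mathcal H^N$, and Bishop--Gromov gives the monotonicity of $r\mapsto \mathcal H^N(B(q_j,r))/v_K(r)$, which yields \emph{upper} bounds on small balls, not lower bounds; there is no uniform constant $c=c(N,K,d)$ with $\mathcal H^N(B_{r_j}(q_j))\ge c\,r_j^N$ absent an additional volume non-collapsing hypothesis on the ambient space (think of balls near a conical singularity with arbitrarily small cross-section). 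Your upper bound on $\mf{m}(\Omega_j)$ has the same defect in disguise: it relies on an implicit absolute bound on the ``angular measure'' at $x_1$, which again depends on the specific space and not only on $N,K,d,\epsilon$.

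The actual Cheeger--Colding argument avoids both issues by never passing through absolute $r_j$-powers. One compares $\mf{m}(\Omega_j)$ directly to $\mf{m}(B_{2r_j}(q_j))$ via the \emph{relative} Bishop--Gromov inequality along minimal geodesics from $x_1$: any such geodesic that meets $B_{r_j}(q_j)$ spends arclength at least $r_j$ inside $B_{2r_j}(q_j)$, while $\Omega_j\subset B_{2d}(x_1)$ and $d(x_1,q_j)\ge \epsilon/2$. The ratio form of Bishop--Gromov then gives $\mf{m}(\Omega_j)\le C(N,K,d,\epsilon)\,r_j^{-1}\mf{m}(B_{2r_j}(q_j))$, and a doubling estimate absorbs the factor $2$. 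No localization machinery beyond the standard Bishop--Gromov monotonicity is needed; your appeal to Cavalletti--Mondino is harmless but unnecessary.
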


We remark that morally this lemma states that if we have a set $E$ consisting of a countable union of balls and a point $x_1$ at distance $\eps$ from this set, then any ``shadow''
in $X$ as seen from $x_1$ (that is, the union of endpoints of a Borel family of geodesic segments starting at $x_1$ and passing through $E$) has the given uniform bound on its measure.

The above lemma immediately yields the following corollary.

\begin{Corollary}[Dimension Comparison]\label{cor:dimcomp}
Let $p\in X$ and $\Omega\subset X$ with $\mf{m}(\Omega)>0$ and let $E$ consist of one point on each geodesic $[p,x]$ with $x\in \Omega$. If $d(p,E)>0$, then
\[
\dim_H(E)\geq N-1.
\]
\end{Corollary}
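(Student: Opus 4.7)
The plan is to argue by contradiction and feed into the preceding lemma a highly efficient cover of $E$ coming from the hypothesized low Hausdorff dimension. Suppose $\dim_H(E) < N-1$. Then $\mc{H}^{N-1}(E) = 0$, so for every $\eta > 0$ and every $\delta > 0$ we can choose a countable cover $E \subset \bigcup_j B_{r_j}(q_j)$ with $r_j < \delta$ and $\sum_j r_j^{N-1} < \eta$. Our aim is to use this cover to bound $\mf{m}(\Omega)$ by an arbitrarily small quantity.

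First I would do some harmless reductions to match the lemma's hypotheses. Since $\mf{m}(\Omega) > 0$, after intersecting with a sufficiently large ball we may assume $\Omega \subset \overline{B_d(p)}$ for some $d > 0$. If $\mf{m}(\Omega \cap E) > 0$, then because $X$ is non-collapsed ($\mf{m} = \mc{H}^N$) we would immediately get $\dim_H(E) = N \geq N-1$, contrary to our standing assumption; so we may replace $\Omega$ by $\Omega \setminus E$. Taking $x_1 = p$ and $\eps := \tfrac{1}{2}\min(d(p,E), d) > 0$ then gives $B_\eps(p) \cup \Omega \subset \overline{B_d(p)\setminus E}$. By construction of $E$, every minimal geodesic from $p$ ending in $\Omega$ passes through the chosen point of $E$, hence meets $\bigcup_j B_{r_j}(q_j)$, so the lemma applies and yields
\[
c(N, K, d, \eps)\, \mf{m}(\Omega) \leq \sum_j r_j^{-1}\, \mf{m}(B_{r_j}(q_j)).
\]

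Next I would invoke the Bishop inequality in the non-collapsed $\RCD(K,N)$ setting: for a uniform constant $C = C(N, K, d)$ we have $\mf{m}(B_{r_j}(q_j)) \leq C r_j^N$ whenever $r_j < \delta \leq d$. Substituting into the previous estimate,
\[
c(N, K, d, \eps)\, \mf{m}(\Omega) \leq C \sum_j r_j^{N-1} < C \eta.
\]
Since $\eta > 0$ is arbitrary and $c(N, K, d, \eps) > 0$, we conclude $\mf{m}(\Omega) = 0$, contradicting the hypothesis. This forces $\dim_H(E) \geq N-1$.

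The argument is essentially a direct application of the preceding lemma together with volume comparison, and the only mild obstacle is the bookkeeping around $\Omega \cap E$ needed to cleanly verify the lemma's inclusion hypothesis; everything else is routine once $\mc{H}^{N-1}(E) = 0$ is unpacked.
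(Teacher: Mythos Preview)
Your argument is correct and is exactly the derivation the paper has in mind (the paper merely asserts that the lemma ``immediately yields'' the corollary, giving no further detail). One minor bookkeeping point: in the lemma the set called $E$ must be the union $\bigcup_j\overline{B_{r_j}(q_j)}$, not the original point-set, so when checking the inclusion $B_\eps(x_1)\cup\Omega\subset\overline{B_d(p)\setminus E}$ you should also excise from $\Omega$ its intersection with that union of balls and choose $\delta$ small relative to $d(p,E)$; since $\mf{m}\bigl(\bigcup_j B_{r_j}(q_j)\bigr)\leq C\sum_j r_j^{N}\leq C\delta\eta$, this costs nothing.
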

The following version of Theorem A from \cite{LiWang-limits} holds with the same proof after replacing their Lemma 1.6 with Corollary \ref{cor:dimcomp} above.

\begin{Theorem}[Lipschitz volume Rigidity (Theorem \ref{thm:LV-rigid})]
Let $X$ and $Y$ be as in Theorem \ref{thm:entrig}. Suppose there is a 1-Lipschitz map $f : X \rightarrow Y$ with $\operatorname{vol}(X)=\operatorname{vol}(f(X)),$ then $f$ is an isometry with respect to the intrinsic metrics of $X$ and $f(X).$ In particular, if $f$ is also onto, then $X$ is isometric to $Y$ .
\end{Theorem}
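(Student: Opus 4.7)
The plan is to follow Li--Wang's argument in \cite{LiWang-limits} for non-collapsed Ricci limit spaces, substituting each of their main ingredients by its $\RCD$ counterpart: volume convergence and almost-maximal-volume theorems (De Philippis--Gigli, \cite[Theorems~1.3, 1.5]{DePhilippisGigli}), the codimension-$2$ bound on the singular set of boundaryless non-collapsed $\RCD(K,N)$ spaces (\cite[Theorem~1.7]{Kapovitch-Mondino}), and the Dimension Comparison Corollary~\ref{cor:dimcomp} above in place of \cite[Lemma~3.1]{Cheeger-Colding}.

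First I would extract from the hypothesis $\mc{H}^N(X)=\mc{H}^N(f(X))$ and the coarea formula (\cref{thm:coareaReichel}) that $\Jac_xf=1$ for $\mc{H}^N$-a.e. $x\in X$ and that $f$ is essentially injective on a full-measure subset. Next I would show that $f$ maps regular points to regular points: for $x\in\mc{R}(X)$ the rescaled pointed spaces $(X,r^{-1}d_X,x)$ converge to $\R^N$, so a subsequential blow-up gives a $1$-Lipschitz volume-preserving map from $\R^N$ to a tangent cone at $f(x)$, which by \cref{lem:ncRCD-tangents} is a non-collapsed $\RCD(0,N)$ metric cone. The almost-maximal-volume theorem then forces this tangent cone to be Euclidean, so $f(x)\in\mc{R}(Y)$, and the blown-up limit is a $1$-Lipschitz volume-preserving map $\R^N\to\R^N$, which must be a Euclidean isometry.

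To promote this infinitesimal isometry to a global isometry, fix a regular $p\in X$. For $\mc{H}^N$-generic $q\in X$, I would use Corollary~\ref{cor:dimcomp} applied with $\Omega$ a small ball around $q$: the set of points of $X$ that block every minimal geodesic from $p$ into $\Omega$ would have Hausdorff dimension at least $N-1$, hence cannot lie inside $\mc{S}(X)\cup f^{-1}(\mc{S}(Y))$, which has codimension at least $2$. Thus there is a minimal geodesic from $p$ to $q$ whose interior meets only regular points and whose image meets only regular points of $Y$. Along such a geodesic, the infinitesimal isometry of the previous step implies $f$ preserves length, and combined with the $1$-Lipschitz property this yields $d_Y(f(p),f(q))=d_X(p,q)$. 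Density of such pairs and continuity of $f$ then extend the equality to all of $X$, proving that $f\colon X\to f(X)$ is an isometry of intrinsic metrics.

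\textbf{The main obstacle} is precisely the geodesic-avoidance step: one must guarantee that the singular sets are small enough, relative to the family of geodesics emanating from $p$, that a full-measure collection of geodesics lands entirely in the regular parts of both spaces. In the Ricci-limit setting this was achieved through \cite[Lemma~3.1]{Cheeger-Colding}, whose proof relied only on Bishop--Gromov volume comparison. Since Bishop--Gromov holds in the $\RCD(K,N)$ setting, the analogous lemma (and Corollary~\ref{cor:dimcomp}) follow verbatim, and all remaining steps of Li--Wang's argument transfer unchanged.
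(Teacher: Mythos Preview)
Your proposal is correct and follows essentially the same approach as the paper: both invoke Li--Wang's argument verbatim, replacing each Ricci-limit ingredient by its $\RCD$ analogue (De~Philippis--Gigli volume convergence and almost-maximal-volume, Kapovitch--Mondino codimension-$2$ singular set, and the Bishop--Gromov-based Cheeger--Colding lemma yielding Corollary~\ref{cor:dimcomp}). The paper in fact provides less detail than you do, simply asserting that ``the same proof'' works after substituting Corollary~\ref{cor:dimcomp} for Li--Wang's Lemma~1.6.
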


\begin{Remark}
The path-isometric map from $[0,2\pi)$ to the unit circle is a volume preserving bijection which is not an isometry. Generalizations of this example are why we exclude ``boundary'' in our assumptions. More generally, there are examples of volume preserving 1-Lipschitz maps which are also bi-Lipschitz homeomorphisms, but not isometries (see  \cite[Example 1.5]{Li-AlexandrovRigidity}).
\end{Remark}

\subsection{Equality case in Theorems \ref{thm:entrig} and \ref{thm:schwarz}}\label{sec:equality_proofS}
First suppose that $\ind_\pi(f)=1$. By \cref{prop:F_converge}, in the equality cases we obtain a 1-Lipschitz map $F$ homotopic to $f$.

  Then by \cref{prop:pre-pi1} we have $1=\ind_\pi(F)\leq \pre(F)$

  and so for a.e. $y\in Y$ we have $\#(F^{-1}(y))\geq 1$. By the equality and the coarea formula,

\[
\mc{H}^N(X)=\int_{X} \Jac_x F d\mathcal{H}^{N}(x) = \int_{Y} \#(F^{-1}(y))d \mathcal{H}^{N}(y)\geq\mc{H}^N(Y)=\mc{H}^N(X).
\]
Hence we have equality everywhere and $\#(F^{-1}(y))=1$ for a.e. $y\in Y$.
It follows that for any measurable $E\subset Y$,
\[
	\mc{H}^N(F^{-1}(E))=\int_{F^{-1}(E)} \Jac_x F d\mathcal{H}^{N}(x) = \int_{E} \#(F^{-1}(y))d \mathcal{H}^{N}(y)=\mc{H}^N(E).
\]
In other words $F$ is a volume preserving map.

We are now ready to prove the equality cases of our main theorems. Note that we cannot have equality when $\ind_\pi(f)=0$, so it follows that $\ind_\pi(f)=[\pi_1(Y),f_*\pi_1(X)]<\infty$.

\begin{proof}[Proof of equality case in Theorems \ref{thm:entrig} and \ref{thm:schwarz}]

When assuming $\ind_\pi(f)=1$ and the equality case of Theorem \ref{thm:schwarz},
then it follows that $F$ is volume preserving, recalling that $f$ is homotopic to $F$.
By  \cref{prop:F_converge},  we know that $F:X\to Y$ can be taken to be $1$-Lipschitz.
Hence, by Theorem \ref{thm:LV-rigid} we conclude that $F$ is an isometry.

Now, for the equality case of Theorem \ref{thm:entrig}, we again first assume that $\ind_\pi(f)=1$.
The quantities on each side of the equality are metrically scale invariant. Normalize the metric on $Y$ so that  its sectional curvatures satisfy $K_Y\leq -1$, and the metric on $X$ so that $h(\bar{X})=h(\til{Y})$.
By  \cref{prop:F_converge}, we obtain a $1$-Lipschitz map $F:X\to Y$.
Again the desired conclusion follows from Theorem \ref{thm:LV-rigid}.

In the general case for $\ind_\pi(f)>1$ we use
covering theory
to lift $f$ to a Lipschitz map $\hat{f}:X\to \hat{Y}$ from $X$ to the finite cover $\hat{Y}$ corresponding to $f_*\pi_1(X)<\pi_1(Y)$, with its induced metric locally isometric to that of $Y$.
In particular, $\ind_\pi(\hat{f})=1$, and we have $\Vol(\hat{Y})=\mf{m}(X)$ so we may apply the index one case to obtain an isometry $\hat{F}$. Equivariance under the deck group implies that $\hat{F}$ descends to a Riemannian cover $F:X\to Y$.
\end{proof}

\section{Applications}\label{sec:stability_proof}

In this section we present the proof of \cref{thm:stab} and provide an application to Einstein $4$-orbifolds.

\subsection{Proof of Theorem \ref{thm:stab}}
Before providing a proof of Theorem \ref{thm:stab}, we state some remarks.

\begin{Remark}\label{rmrk-differencehyp}

Recall that
Theorem \ref{thm:stab}
can be seen as an extension of
\cite[Theorem 1.3]{Bess-BCG}.
Though, in \cite[Theorem 1.3]{Bess-BCG}, the diameter bound hypothesis is on the target space $Y$, while ours is on the domain $X$.
We use our diameter assumption to obtain a uniform lower bound on the volume of $X$.
In the case of manifolds one can obtain such a bound via Gromov's Isolation Theorem \cite[pg. 14]{Gromov82}.
Currently there is no such result for $\RCD$-spaces. In the case that the spaces $X$ are in addition orientable Alexandrov spaces, there is such a result by Mitsuishi--Yamaguchi \cite[Theorem 1.8]{Mitsuishi-Yamaguchi:14}.
Using the degree properties of the simplicial volume on $X$ a uniform lower bound on the volume of $X$ can be obtained, and therefore the diameter bound on $X$ can be replaced with one on $Y$.
\end{Remark}

\begin{Remark}\label{rmrk-Examples}
Regarding the importance of
 \cref{thm:stab},
note that there are numerous examples of non-manifold $\RCD$ spaces---that are not Alexandrov spaces---which are arbitrarily close in the measured Gromov--Hausdorff sense to a manifold $M$ and are not even homotopic to $M$ (see e.g. \cite{Colding:20}).
Also, note that even in the case when $X$ and $Y$ are smooth manifolds, we cannot conclude from  \cref{thm:stab} that their smooth structures are the same as the hyperbolic smooth structures.
Although in that case, provided that $\epsv_0$ is small enough, it does follow that the smooth structures are the same (cf. \cite{SchroederEtAl:18}).
\end{Remark}

\begin{proof}[Proof of Theorem \ref{thm:stab}]
Suppose that for some $N\geq 3$, $K \in \R$, and $D>0$, no such $\eps_0(N,K,D)$
as in the statement exists.
Then, there exists a sequence $\{\epsv_i\}$  of positive numbers converging to $0$ such that for each $i$ there exist,

\begin{itemize}
\item[(i)] A compact locally $\CAT(-1)$ and $\RCD(K,N)$ space $(Y_i,d_i,\mc{H}^{N})$ without boundary;
\item[(ii)] A compact $\RCD(-(N-1),N)$ space $(X_i,d_i,\mf{m}_i)$ without boundary and with $\diam(X_i)\leq D$;
\item[(iii)] A continuous map $f_i:X_i\to Y_i$ with $1\leq\ind_\pi(f_i)$;
\end{itemize}
such that, for all $i$,
\[
\mc{H}^{N}(X_i)\leq \ind_\pi(f_i)(1+\epsv_i)\mc{H}^{N}(Y_i),
\]
and for which some part of the subsequent conclusion fails. As in the beginning of Subsection \ref{sec:natural},
we may assume as before that each $f_i$ is Lipschitz.

As $Y_i$ are assumed to be locally $\CAT(-1)$ spaces, by Lemma \ref{lem:target_space} they are also Alexandrov spaces and smooth topological manifolds. (Specifically they are $\CBB(\beta)$ with $\beta \geq K+(N-2)$.) Moreover, for any $\eps_i>0$ the metric on $Y_i$ is $(1+\eps_i)$-Lipschitz close to a Riemannian metric $g_i$ of bounded curvature between $[-1+\eps_i,K+(N-2)-\eps_i]$. By the Margulis--Heintze theorem (e.g. see \cite{Hei76})  the volumes of the $(Y_i, g_i)$ satisfy $\vol_{g_i}(Y_i)\ge C(N,K)>0$.
Therefore, we have
\begin{equation}\label{eqn:HnY-lower-bd}
\mc{H}^N(Y_i)\geq (1+\eps_i)^N\vol_{g_i}(Y_i)\ge (1+\eps_i)^NC.
\end{equation}
Analogously to how we defined $\bar{X}$, we set $\bar{X}_i=\til{X}_i/\ker (f_{i})_*$. Recall that, by Sections \ref{sec:natural} and \ref{subsec:Fs-Lip}, given $s_i>h(\bar{X}_i)$ sufficiently close to $h(\bar{X}_i)$, the natural maps $F_i:=F_{s_i}:X_i\to Y_i$ are $(1+\eta_i)$-Lipschitz maps which are homotopic to $f_i$.
Therefore, the diameter bound on $X_i$ allows us to bound the diameter of $Y_i$ by $D(1+\eta_i)$.

Let $(Y, d_Y, \mf m_Y)$ be a mGH limit of the $(Y_i,d_i,\mc{H}^{N})$, then it is a non-collapsed $\RCD(K,N)$ and $\CAT(-1)$ space.
There is a homeomorphism $\delta:[0,\infty)\to[0,\infty)$, depending only on the limit $Y$, such that there are $(1+\delta(\eps_i))$-biLipschitz homeomorphisms $\phi_i:Y_i\to Y$ for all sufficiently large indices $i$.

Now, by Theorem~\ref{thm:schwarz}, $\mc{H}^N(X_i)\geq C >0$ uniformly for all $i$.
Hence $(X_i,d_i,\mf{m}_i)$  are uniformly non-collapsed $\RCD(-(N-1),N)$
with bounded diameters.

By \cref{eq:fullIneq}
we have
\[
\frac{h(\bar{X}_i)}{h(\til{Y}_i)}\geq \ind_\pi(f_i)\frac{\mc{H}^N(Y_i)}{\mc{H}^N(X_i)}.
\]
By our assumption, this latter quantity is larger than $1+\eps_i$, which tends to $1$. Since by hypothesis $h(\bar{X}_i)\leq N-1$ and $h(\til{Y}_i)\geq N-1$, we have $h(\bar{X}_i)$ and $h(\til{Y}_i)$ must both tend to $N-1$ as $\eps_i\to 0$. Hence $s_i$ tends to $N-1$ and $\eta_i$ tends to $0$ as $i\to \infty$.

 If we choose a sequence $\set{s_{i}}$ converging to $N-1$ from above, then after passing to a subsequence the maps $F_i:X_i\to Y_i$ converge to a $1$-Lipschitz map $F:X\to Y$ by the generalized Arzela--Ascoli Theorem \cite[Appendix]{GroPet}.

Next, we show:

\begin{lemma}\label{lem:ind-pi-bounded}
We have  $\limsup_i \ind_\pi(f_i)<\infty$.
\end{lemma}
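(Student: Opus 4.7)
The plan is to obtain the uniform upper bound on $\ind_\pi(f_i)$ by chaining the inequalities we already have: the divisibility $\ind_\pi(f_i) \mid \ind_H(f_i)$ from Proposition~\ref{prop:pre-pi1}, the volume comparison $\ind_H(f_i)\,\mc{H}^N(Y_i) \leq \mc{H}^N(X_i)$ from Theorem~\ref{thm:schwarz} applied to each $f_i$, and uniform two-sided geometric control on the volumes $\mc{H}^N(X_i)$ and $\mc{H}^N(Y_i)$.

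First I would record that since the hypotheses of Theorem~\ref{thm:schwarz} hold for every $f_i : X_i \to Y_i$ (both are non-collapsed $\RCD(K,N)$ spaces without boundary, and $Y_i$ is locally $\CAT(-1)$), that theorem gives
\[
\ind_\pi(f_i) \leq \ind_H(f_i) \leq \frac{\mc{H}^N(X_i)}{\mc{H}^N(Y_i)},
\]
whenever $\ind_H(f_i)$ is finite; the case $\ind_H(f_i)=0$ would force $\ind_\pi(f_i)=0$ by divisibility, contradicting the hypothesis $1\le \ind_\pi(f_i)$, so we may assume all indices are finite. Next, the lower bound $\mc{H}^N(Y_i) \geq C(N,K) > 0$ has already been established in~\eqref{eqn:HnY-lower-bd} using Lemma~\ref{lem:target_space}(5) together with the Margulis--Heintze volume bound for closed Riemannian manifolds with sectional curvature bounded above by a negative constant depending only on $N$ and $K$.

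It remains to produce a uniform upper bound on $\mc{H}^N(X_i)$. Since each $(X_i, d_i, \mc{H}^N)$ is an $\RCD(-(N-1),N)$ space with $\diam(X_i)\leq D$, the Bishop--Gromov volume comparison for $\RCD$ spaces implies that for any $x_i\in X_i$,
\[
\mc{H}^N(X_i) \leq \mc{H}^N\bigl(B(x_i, D)\bigr) \leq V_{-1,N}(D),
\]
where $V_{-1,N}(D)$ is the volume of a ball of radius $D$ in the simply connected $N$-dimensional space of constant curvature $-1$. Combining the three estimates yields
\[
\ind_\pi(f_i) \leq \frac{V_{-1,N}(D)}{C(N,K)}
\]
uniformly in $i$, which proves the lemma. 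There is no real obstacle here; the only point worth double-checking is that the Bishop--Gromov comparison is applied with the correct model (the ambient diameter bound on $X_i$ gives containment in a single ball of radius $D$), and that Theorem~\ref{thm:schwarz} really does apply directly to each $f_i$ rather than to the approximating maps $F_i$.
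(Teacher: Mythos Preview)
Your proposal is correct and follows essentially the same approach as the paper: bound $\mc{H}^N(X_i)$ from above via Bishop--Gromov together with the non-collapsed condition, bound $\mc{H}^N(Y_i)$ from below via \eqref{eqn:HnY-lower-bd}, and combine with the inequality \eqref{eq:Thm2-ineq} from Theorem~\ref{thm:schwarz}. The only cosmetic difference is that the paper makes explicit the appeal to \cite[Corollary~2.13]{DePhilippisGigli} to turn the Bishop--Gromov ratio into an absolute upper bound $\mc{H}^N(X_i)\le \mc{H}^N(B_K(D))$ in the non-collapsed setting, whereas you fold this into the phrase ``Bishop--Gromov volume comparison.''
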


\begin{proof}
The Bishop--Gromov theorem gives the following upper bound on the volume of $X_i$:
\[
\mc{H}^N(X_i)\leq V^+=\inf_{x\in X_i,0<\eps}\frac{\mc{H}^{N}(B(x,\eps))}{\mc{H}^{N}(B_{K}(\eps))}\mc{H}^{N}(B_{K}(D))
\]
Since $X_i$ are non-collapsed $\RCD(K,N)$ spaces, by  \cite[Corollary 2.13]{DePhilippisGigli},
\[
V^+\leq \mc{H}^{N}(B_{K}(D)).
\]

The desired estimate now follows from \cref{eq:Thm2-ineq}
and \cref{eqn:HnY-lower-bd} above. \end{proof}

We continue with:
\begin{lemma}\label{lem:ind-pi-lim}
After passing to a further subsequence we have
\[
\ind_\pi(F)=\lim_i \ind_\pi(f_i).
\]
\end{lemma}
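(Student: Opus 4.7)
\emph{Plan.} My plan is to use \cref{lem:ind-pi-bounded} to pass to a subsequence along which $\ind_{\pi}(f_{i})$ is constantly equal to some $k\in\mathbb{N}$, and then prove $\ind_{\pi}(F)=k$. First I would invoke the uniform non-collapsing and the uniform diameter bounds on both $(X_{i},d_{i},\mathcal{H}^{N})$ and $(Y_{i},d_{i},\mathcal{H}^{N})$, together with the stability of $\pi_{1}$ for non-collapsed mGH-limits of $\RCD$ spaces admitting universal covers (using \cite{Mondino-Wei,Wang2022}), to conclude that for all $i$ large enough we may identify $\pi_{1}(Y_{i})\cong \pi_{1}(Y)$ in a way compatible with the convergence. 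Since $\pi_{1}(Y)$ is finitely presented and contains only finitely many subgroups of index $k$, a further subsequential extraction makes the images $(f_{i})_{\ast}\pi_{1}(X_{i})$ correspond to a single fixed subgroup $H\le \pi_{1}(Y)$ with $[\pi_{1}(Y):H]=k$.

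Next I would pass to intermediate covers: let $p:\widehat{Y}\to Y$ be the $k$-fold cover associated with $H$, and let $\widehat{Y}_{i}\to Y_{i}$ be the corresponding covers, which converge to $\widehat{Y}$ in equivariant mGH sense. The natural maps $F_{i}$ lift to maps $\widehat{F}_{i}:X_{i}\to \widehat{Y}_{i}$, which remain uniformly Lipschitz by \cref{lem:Lips} combined with the fact that the covering projections are local isometries. Applying the generalized Arzel\`a--Ascoli theorem along a further subsequence yields a $1$-Lipschitz limit $\widehat{F}:X\to \widehat{Y}$ satisfying $p\circ \widehat{F}=F$. This immediately gives the inclusion $F_{\ast}\pi_{1}(X)\subseteq p_{\ast}\pi_{1}(\widehat{Y})=H$, and hence $\ind_{\pi}(F)\ge k$.

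The main obstacle is the reverse inequality $\ind_{\pi}(F)\le k$, equivalently $\widehat{F}_{\ast}\pi_{1}(X)=\pi_{1}(\widehat{Y})$. To address it I would use that each lifted map $\widehat{f}_{i}:X_{i}\to\widehat{Y}_{i}$ is by construction $\pi_{1}$-surjective, and that the homotopy $\Psi_{i}$ from \cref{lem:psitilde} lifts to a homotopy between $\widehat{F}_{i}$ and $\widehat{f}_{i}$, so that $(\widehat{F}_{i})_{\ast}\pi_{1}(X_{i})=\pi_{1}(\widehat{Y}_{i})$. Given any generator $\alpha\in\pi_{1}(\widehat{Y})$ represented by a loop of bounded length (possible since $\widehat{Y}$ is compact), the identification $\pi_{1}(\widehat{Y}_{i})\cong \pi_{1}(\widehat{Y})$ yields loops $\beta_{i}\subset X_{i}$ with $(\widehat{F}_{i})_{\ast}[\beta_{i}]=\alpha$ and of length controlled by the uniform Lipschitz constant of $\widehat{F}_{i}$ and $\diam(X_{i})\le D(1+\eta_{i})$. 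Arzel\`a--Ascoli applied to $\beta_{i}$ furnishes a loop $\beta$ in $X$ whose $\widehat{F}$-image is homotopic to $\alpha$, proving $\pi_{1}$-surjectivity of $\widehat{F}$ and hence $\ind_{\pi}(F)=k$, as claimed.
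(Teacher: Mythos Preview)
Your overall strategy matches the paper's: stabilize $k=\ind_\pi(f_i)$ and the image subgroup, then prove both inclusions $F_*\pi_1(X)\subseteq H$ and $H\subseteq F_*\pi_1(X)$. Your argument for $F_*\pi_1(X)\subseteq H$ via lifting the $F_i$ to the $k$-fold covers $\widehat{Y}_i$ and passing to a limit $\widehat{F}$ with $p\circ\widehat{F}=F$ is somewhat different from, and arguably cleaner than, the paper's route, which instead invokes Wang's surjection $r_i:\pi_1(X_i)\to\pi_1(X)$ coming from semi-local simple connectedness of the limit.

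The gap is in the reverse inclusion. You assert that the preimage loops $\beta_i\subset X_i$ with $(\widehat{F}_i)_*[\beta_i]=\alpha$ can be chosen with length ``controlled by the uniform Lipschitz constant of $\widehat{F}_i$ and $\diam(X_i)$''. But the Lipschitz constant of $\widehat{F}_i$ bounds \emph{image} lengths, not \emph{preimage} lengths; there is no a priori reason any $\beta_i$ in the required homotopy class has uniformly bounded length, and $\diam(X_i)\le D$ alone does not help (elements of $\pi_1$ of a compact space can have arbitrarily long shortest representatives). Without that bound your Arzel\`a--Ascoli step does not apply and the limit loop $\beta$ is not produced.

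The paper handles this direction differently. It uses the Lipschitz constant of $\phi_i\circ F_i$ the other way, to get a uniform \emph{lower} length bound $L'$ on representatives $c_i$ of $\gamma_i\in(\phi_i\circ F_i)_*^{-1}(\sigma)$ from the positive systole of $Y$. It then invokes the Mondino--Wei revised fundamental group: for $\delta<\delta_0(X)$ one has $\pi_1(X)\cong G(X,\delta)$, and once the $X_i$ are $\tfrac{\delta}{30}$-close to $X$ there is a natural isomorphism $G(X_i,\delta)\cong G(X,\delta)$. This group-level identification transports $\gamma_i$ to a nontrivial $\gamma\in\pi_1(X)$ (nontriviality is exactly what the lower length bound $L'$ buys), and compatibility with the convergence $F_i\to F$ yields $F_*\gamma=\sigma$. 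To repair your argument you should replace the Arzel\`a--Ascoli step for the $\beta_i$ by this passage through $G(\,\cdot\,,\delta)$.
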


\begin{proof}
Set $k=\liminf_i \ind_\pi(f_i)$, and by \cref{lem:ind-pi-bounded} we have
\[
1\leq k \leq\limsup_i \ind_\pi(f_i)<\infty.
\]
Passing to a further subsequence, we may assume that $k=\ind_\pi(f_i)$ is constant. Since the index of $(f_i)_*\pi_1(X_i)$ is $k$, after passing to a further subsequence we may assume that $(\phi_i\of F_i)_*\pi_1(X_i)=(\phi_i\of f_i)_*\pi_1(X_i)$
is a common subgroup $\Gamma_0<\pi_1(Y)$ for all $i$. (Recall here $\phi_i:Y_i\to Y$ are the biLipschitz  homeomorphisms defined above.)

We claim that for the limit map $F:X\to Y$, $F_*\pi_1(X)>\Gamma_0$
and thus $0<\ind_\pi(F)\leq k$.  Fix $\sigma\in \Gamma_0$  with $\sigma\neq 1$. We observe that any representative of $\sigma$ has at least some length $L$ in $Y$.
Since $\phi_i\of F_i$ is $(1+\delta(\eps_i))(1+\eta(\eps_i))$-Lipschitz, for some function with $\lim_{\eps\to 0}\eta(\eps)=0$, any representative curve $c_i$ of $\gamma_i\in (\phi_i\of F_i)_*^{-1}(\sigma)\subset\pi_1(X_i)$ will have a uniformly large lower bound $L'$ for its length, independent of $i$.

We follow the notation from \cite{Mondino-Wei} and set $G(X_i,\delta):=\pi_1(X_i,p_i)/\pi_1(X_i,\delta,p_i)$.
Choosing $i$ sufficiently large so that $\eps_i<\frac{\delta}{30}$ we have that $G(X_i,\delta)$ is naturally isomorphic to $G(X,\delta)=\pi_1(X,p)/\pi_1(X,\delta,p)$.
By Theorem 2.7 of \cite{Mondino-Wei} and Theorem 3.1 of \cite{Mondino-Wei} we obtain that there is a $\delta_0(X)>0$ such that for all $\delta<\delta_0(X)$, $\pi_1(X)\cong G(X,\delta)$.
Hence, passing to a subsequence of $\gamma_i$ we may find a  subsequence of the representatives $c_i$, converging to a representative $c$ (also of length at least $L'$), of a nontrivial element $\gamma\in G(X,\delta)\cong\pi_1(X)$.
Passing to another subsequence we obtain that the images $F_i(c_i)$ converge to a representative of $\alpha$.
Hence we also have $\gamma\in\pi_1(X)$ for which $[F(c)]=F_*(\gamma)=\alpha$. The claim follows.

By the general result \cite[Corollary 1.2]{Wang2022} under the assumption of our diameter bounds on $X_i$, there is a surjective map $r_i:\pi_1(X_i)\to \pi_1(X)$.
Moreover, $r_i(\alpha)$ is realized by taking the equivalence class of a curve that is a nearby curve in $X$ to a realization of a curve in $X_i$ representing $\alpha$.
It follows that $F_*\pi_1(X)< (F_i)_*(\pi_1(X_i))=\Gamma_0$.
Hence $F_*\pi_1(X)=\Gamma_0$, and thus $\ind_\pi(F)=k$.
\end{proof}

\begin{Remark}
By hypothesis and Theorem \ref{thm:entrig}, in our case it follows that
\[1\leq \frac{\mc{H}^{N}(X_i)}{k \mc{H}^N(Y)}\leq 1+\eps_i\]
where $k=\lim_i \ind_\pi(f_i)$.
Hence $\mc{H}^{N}(X)=\lim_i\mc{H}^{N}(X_i)=k\mc{H}^N(Y)$.

Note for this last equality, we require lower curvature bounds and noncollapsing in our {\em measured}-Gromov--Hausdorff convergence, since it is not true that $\mc{H}^{N}$ is lower-semicontinuous under GH convergence, even for manifolds.
In fact, Ivanov provides an example  of metrics $g_i$ on $S^3$ that Gromov--Hausdorff converge to the round metric, but with $\Vol(S^3,g_i)\to 0$ \cite{Iva}.
(However, these do not have uniform lower curvature bounds.)
\end{Remark}

To finish the proof of \cref{thm:stab} we observe that, by Theorem \ref{thm:schwarz}:
\[
\ind_\pi(F)\mc{H}^N(Y)\leq \mc{H}^N(X)
\]
Therefore,
\[
\ind_\pi(F)\mc{H}^N(Y)\leq \mc{H}^N(X)\leq \ind_\pi(f_i)(1+\epsv_i)\mc{H}^N(Y).
\]
Taking the liminf in $i$ on both sides of the inequality and using that, for sufficiently large $i$, $\ind_\pi(f_i)=\ind_\pi(F)$ we obtain $\mc{H}^N(X)=\ind_\pi(F)\mc{H}^N(Y)$. Then, by the rigidity case in Theorem \ref{thm:schwarz}, $\ind_\pi(F)\in \mathbf{N}$ and $f$ is homotopic to a degree $\ind_\pi(F)$ Riemannian cover $X=\hat{Y}\to Y$, and both the metrics of $X$ and $Y$ are locally hyperbolic. Moreover,
\[
\mc{H}^N(\hat{Y})=\ind_\pi(F)\mc{H}^N(Y)
\]
and therefore,
\[
\mc{H}^N(X)=\mc{H}^N(\hat{Y}).
\]
However, since we now know that the $X_i$ converge to a smooth Riemannian manifold, Theorem 1.1 of \cite{Honda-Peng:22} (generalizing the main result of \cite{Colding:20}) implies that the $X_i$ are eventually homeomorphic to $X$.
This contradicts our assumption that the conclusion of the theorem fails. Hence an $\eps_0(N,K,D)>0$ with the stated properties exists.
\end{proof}

\subsection{Application to Einstein $4$-Orbifolds}
We say that $X$ is a {\em Einstein orbifold} if there is a cover of $X$ by open sets $\set{U_\alpha}$ such that $U_\alpha$ may be isometrically identified as $U_\alpha=V_\alpha/\Gamma_\alpha$ where  $V_\alpha$ is open in a common Einstein manifold $M$ and $\Gamma_\alpha$ is a discrete group of isometries, possibly with torsion.

Recall that the {\em orbifold Euler characteristic $\chi_{orb}(X)$} of an orbifold $X$ is the orbifold-equivariant homotopy invariant, defined by Satake \cite{Satake} as:
\[
\chi_{orb}(X)=\sum_{k}\frac{(-1)^{\dim s_k}}{N_{s_k}}\in\Q
\]
Here $\bigcup_k s_k$ is an equivariant triangulation of $X=\bigcup_k \abs{s_k}$, i.e. all of the irreducible components of singular points occur as subcomplexes, and $N_{s_k}$ is the order of the stabilizer of the simplex $s_k$.
We continue to denote the standard Euler characteristic of $X$ by $\chi(X)$.

\begin{Corollary}\label{cor:Einstein}
Suppose a closed $4$-dimensional Einstein orbifold $X$ with negative Einstein constant admits a continuous map $f:X\to Y$ into a hyperbolic $4$-manifold $Y$ with $\chi_{orb}(X)\leq \ind_\pi(f) \chi(Y)$.
Then $X$ is homothetic to a degree $\ind_\pi(f)$ cover of $Y$.
\end{Corollary}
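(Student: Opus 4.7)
The plan is to combine the orbifold Chern--Gauss--Bonnet formula with the volume rigidity of Theorem \ref{thm:schwarz}. After rescaling the Einstein metric so that $\mathrm{Ric}_X = -3\,g_X$, I would argue that the underlying metric measure space $(X,d,\mc{H}^4)$ is a non-collapsed $\RCD(-3,4)=\RCD(-(N-1),N)$ space with $N=4$ and $\pa X=\emptyset$: locally an Einstein orbifold is the quotient of a smooth Einstein manifold by a finite group of isometries, and the $\RCD$ condition descends to such quotients (cf.\ Remark \ref{rmrk-moreExam}); passing to the orientation double cover if needed removes reflection strata, leaving a singular set of codimension $\geq 2$. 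Thus Theorem \ref{thm:schwarz} applies with $X$ as the domain and the hyperbolic $Y$ as the target, yielding
\[
\Vol(X)\geq \ind_H(f)\,\Vol(Y).
\]

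The reverse inequality comes from Gauss--Bonnet. Satake's orbifold Chern--Gauss--Bonnet for a closed Einstein $4$-orbifold with $\mathrm{Ric}=-3g$ reads
\[
\chi_{orb}(X)=\frac{1}{32\pi^2}\int_X\bigl(|W|^2+24\bigr)\,d\Vol\;\geq\; \frac{3}{4\pi^2}\Vol(X),
\]
with equality iff the Weyl tensor $W$ vanishes (i.e.\ $X$ is locally conformally flat, hence locally hyperbolic once combined with the Einstein condition). For the hyperbolic $4$-manifold $Y$ the Weyl tensor vanishes identically, so $\chi(Y)=\tfrac{3}{4\pi^2}\Vol(Y)$. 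Combining with the hypothesis $\chi_{orb}(X)\leq \ind_\pi(f)\chi(Y)$ gives
\[
\Vol(X)\leq \tfrac{4\pi^2}{3}\chi_{orb}(X)\leq \ind_\pi(f)\cdot \tfrac{4\pi^2}{3}\chi(Y)=\ind_\pi(f)\,\Vol(Y).
\]
Note that $\chi_{orb}(X)>0$ by Gauss--Bonnet, so the hypothesis forces $\ind_\pi(f)>0$, and in particular finite.

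Combining the two inequalities with $\ind_\pi(f)\mid \ind_H(f)$ from Proposition \ref{prop:pre-pi1} produces the squeeze
\[
\ind_\pi(f)\Vol(Y)\leq \ind_H(f)\Vol(Y)\leq \Vol(X)\leq \ind_\pi(f)\Vol(Y),
\]
so every inequality is an equality. In particular $\ind_H(f)=\ind_\pi(f)$ and $\Vol(X)=\ind_\pi(f)\Vol(Y)$, and the rigidity clause of Theorem \ref{thm:schwarz} then says that $X$ and $Y$ are isometric to hyperbolic manifolds with $f$ homotopic to a Riemannian cover of degree $\ind_\pi(f)$. Undoing the initial rescaling yields the claimed homothety.

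The expected main obstacle is the structural input of the first paragraph: verifying that a negatively curved Einstein $4$-orbifold, as a metric measure space, really is a non-collapsed $\RCD(-(N-1),N)$ space with $\pa X=\emptyset$, and making the orbifold Chern--Gauss--Bonnet identity rigorous across the various singular strata (isolated and non-isolated), including the case in which $W$-curvature concentrates near the singular locus. Once these structural inputs are secured, the argument reduces to the two-sided volume comparison shown above.
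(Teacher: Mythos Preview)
Your proposal is correct and follows essentially the same approach as the paper: normalize so that $\mathrm{Ric}=-3g$, use Satake's orbifold Chern--Gauss--Bonnet to bound $\chi_{orb}(X)$ below by a multiple of $\Vol(X)$, apply the inequality of Theorem~\ref{thm:schwarz} to bound $\Vol(X)\geq \ind_H(f)\Vol(Y)$, and squeeze with the hypothesis to force equality and invoke the rigidity clause. You are in fact more careful than the paper in flagging the need to verify that the Einstein orbifold is a non-collapsed $\RCD(-(N-1),N)$ space without boundary---the paper simply applies Theorem~\ref{thm:schwarz} without comment on this point.
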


\begin{proof}
In dimension four we may use the decomposition of the Pfaffian of the curvature tensor into components involving the Weyl tensor $W_g$ and the scalar and Ricci curvature components.
The corresponding decomposition in the Gauss--Bonnet--Chern formula \cite[Theorem 2]{Satake}  for the orbifold characteristic is the following,
\[
\chi_{orb}(X)=\frac{1}{8 \pi^{2}} \int_{X}\left(\left\|W_{g}\right\|^{2}-C'\left\|\op{Ricci}(g)-\frac{\op{scal}(g)}{n}g\right\|^{2}+C\left|\op{scal}{(g)}\right|^{2}\right) d \vol_{g}
\]
for some universal constants $C>0$ and $C'>0$.
The middle term is $0$ because $X$ is Einstein.
We may scale $g$ so that $\op{Ricci}(g)=-3g$.
The Weyl tensor vanishes for the constant curvature $-1$ metric $g_0$ on $Y$, and so we may estimate,
\begin{align*}
		\chi_{orb}(X) & \geq \frac{C}{8 \pi^{2}} \int_{X}\left|\op{scal}{(g)}\right|^{2} d \vol_{g}=\frac{18C}{ \pi^{2}}\operatorname{Vol}(X, g) \geq \ind_H(f)\frac{18C}{ \pi^{2}} \operatorname{Vol}\left(Y, g_{0}\right)\\
		&=\ind_H(f)\frac{1}{8 \pi^{2}} \int_{Y}C\left|\op{scal}{(g_0)}\right|^{2}d \vol_{g}=\ind_H(f)\chi(Y)\geq\ind_\pi(f)\chi(Y).
\end{align*}
Here the middle inequality follows from Theorem \ref{thm:schwarz}.

As $\chi_{orb}(X)\leq \ind_\pi(f) \chi(Y)$, there is equality and thus $\Vol_g(X)=\Vol_{g_0}(Y)$.
Hence by Theorem \ref{thm:schwarz}, $(X,g)$ is a Riemannian cover of $(Y,g_0)$.
\end{proof}

\begin{Remark}
In the definition of $\ind_\pi$ we use the standard fundamental group and not the orbifold fundamental group $\pi_1^{orb}$. We are not certain if the statement holds if we replace $\ind_\pi(f)$ by $\ind_{\pi_1^{orb}}(f)$.

Observe that it follows from the proof of the above corollary that any $X$ satisfying the hypotheses must have $\chi_{orb}(X)>0$, and thus any map satisfying the hypotheses must have $\ind_\pi(f)>0$.
\end{Remark}

\section{Proof of Proposition \ref{prop:Jac_est}}\label{sec:proof4.8}
For convenience we restate the proposition here.

{
\renewcommand{\theTheorem}{\ref{prop:Jac_est}}
\begin{Proposition}
For all $s>h(\bar{X})$, the natural map $F_s:X\to Y$
has the following inequality in the case $Y$ is negatively curved locally symmetric:
\[
\Jac F_s(x)\leq \left(\frac{s}{h(\til{Y})}\right)^{N},
\]
and
\[
\Jac F_s(x)\leq \left(\frac{s}{N-1}\right)^{N},
\]
in the case that $Y$ is as in \cref{thm:schwarz}.
\end{Proposition}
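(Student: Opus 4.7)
The plan is to adapt the Besson--Courtois--Gallot Jacobian estimate, in the distance-function formulation of Sambusetti \cite{Sambusetti-1999}, to the $\RCD$ setting, working pointwise at an $\mc H^N$-a.e.\ point of $X$. By \cref{thm:MondinoNaberImproved} the essential dimension of $X$ equals $N$, so for $\mc H^N$-a.e.\ $x\in X$ the tangent cone is $\R^N$. Since $F_s$ is Lipschitz by \cref{lem:Lips}, Ambrosio--Kirchheim produces a tangential differential $d_xF_s\colon T_xX \to T_{F_s(x)}\til Y$, and by \cref{lem:Jacobian} $\Jac F_s(x) = |\det d_x F_s|$. Thus it suffices to bound $|\det d_xF_s|$ pointwise.

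First I would derive an implicit formula for $d_xF_s$ by differentiating the barycenter identity
\[
\int_{\bar X} \log_{F_s(x)}\!\bigl(\til f(y)\bigr) \, d\mu_x^s(y) = 0
\]
in $x$, using $\log_z(w) = -\nabla_z \tfrac12 d^2(w,z)$ and the density $e^{-sd(x,\cdot)}$ of $\mu_x^s$. This yields the linear equation
\[
K_x \circ d_xF_s(v) = -s\, E_x(v),\qquad v \in T_xX,
\]
with
\begin{align*}
K_x &:= \int_{\bar X} \Hess_z\tfrac12 d^2\bigl(\til f(y), F_s(x)\bigr)\, d\mu_x^s(y),\\
E_x(v) &:= \int_{\bar X} \log_{F_s(x)}\!\bigl(\til f(y)\bigr)\, \bigl\langle \nabla_x d(x,y), v\bigr\rangle \, d\mu_x^s(y).
\end{align*}
In any $\CAT(-1)$ space one has $\Hess(\tfrac12 d^2) \geq I$ (orthogonally because $d\coth d \geq 1$ from the Jacobi-field comparison in \cref{lem:target_space}, radially with equality), so $K_x$ is positive definite and invertible. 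Rigor at a regular point comes from combining the Euclidean tangent cone on $X$ with the $C^{1,1}$-regularity of $\log_\cdot(w)$ on $\til Y\setminus\{w\}$ due to Kapovitch--Lytchak. Taking determinants,
\[
|\det d_xF_s| = s^{N}\, \frac{|\det E_x|}{\det K_x},
\]
and the proposition reduces to bounding this ratio by $h(\til Y)^{-N}$, respectively $(N-1)^{-N}$.

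For the locally symmetric target, the rank-one Jacobi-field formula diagonalizes $\Hess d^2$ according to the underlying division algebra, and the algebraic lemma of \cite[Appendix B]{Besson-Courtois-Gallot:95} extracts the sharp constant $h(\til Y)^{-N}$; equality is attained asymptotically when $\til f_*\mu_x^s$ approaches the Patterson--Sullivan measure on the visual boundary. For the general $\CAT(-1)$ target, the uniform bound $K_x \geq \|\mu_x^s\|\, I$ combined with a Cauchy--Schwarz--Hadamard estimate for $|\det E_x|$ and the entropy bound $h(\bar X) \leq N-1$ from \cite[Theorem 3.1]{stormmod} yields the cruder constant $(N-1)^{-N}$.

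\textbf{Main obstacle.} The principal difficulty is the implicit differentiation itself. The map $F_s$ is only a priori Lipschitz, the measure $\mu_x^s$ has infinite support on the cover $\bar X$, and both the barycenter operation and the $\log$-map on $\til Y$ must be linearized simultaneously. Combining the Euclidean tangent cone on $X$ with the $C^{1,1}$-regularity of $\log$ on $\til Y$ is what makes the formula $K_x\, d_xF_s = -sE_x$ rigorous; once this is established, the matrix inequalities that yield the two cases are classical consequences of Hessian comparison and the BCG algebraic lemma, and do not use any further RCD-specific input.
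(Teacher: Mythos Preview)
Your overall strategy---differentiate the barycenter identity to obtain a relation $K_x\, d_xF_s = -sE_x$, then bound the determinant ratio via Hessian comparison and the BCG algebraic lemma---is the same as the paper's, and your identification of the implicit differentiation as the main technical point is correct. The paper makes this clean by passing from $\sigma_x^s=\til f_*\mu_x^s$ to the distance-weighted probability measure $d\eta_x^s \propto \rho_z\, d\sigma_x^s$ (with $\rho_z=d_{\til Y}(\cdot,z)$); this absorbs the unbounded factor $\rho_z$ in your $\log$-map and turns your $K_x,E_x$ into normalized operators $L_x^s+K_x^s$ and $A_x^s$ built from averages of \emph{unit} vectors. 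Without this renormalization the Cauchy--Schwarz step on $E_x$ produces a $\rho_z^2$-weighted integral that does not cancel against $\det K_x$, so you should say explicitly how you intend to handle that.

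The genuine gap is in your $\CAT(-1)$ case. The crude bound $K_x\ge \|\mu_x^s\|\,I$ (i.e.\ $\Hess\tfrac12 d^2\ge I$) does not yield the stated constant $(N-1)^{-N}$: after normalizing, you would need $|\det A_x^s|/\det(L_x^s+K_x^s)\le (N-1)^{-N}$, and this fails if you decouple numerator and denominator. The paper instead uses the sharper $\CAT(-1)$ comparison $D\nabla\rho_z\ge I-\nabla\rho_z\otimes d\rho_z$ to obtain $K_x^s\ge I-H_x^s$ after $\eta$-normalization, applies Cauchy--Schwarz in the form $\det(A_x^s)^2\le \det H_x^s\det B_x^s$ with $\det B_x^s\le N^{-N}$, and then invokes the BCG algebraic lemma with $d=1$, giving $\det H_x^s/\det(I-H_x^s)^2\le (N/(N-1)^2)^N$. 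The number $N-1$ here is $h_0=N+d-2$ for $d=1$, i.e.\ the entropy of $\HH^N_\R$; it has nothing to do with the bound $h(\bar X)\le N-1$ from \cite{stormmod}, which plays \emph{no role} in \cref{prop:Jac_est} and enters only later when deriving \eqref{eq:Thm2-ineq}. So your invocation of the entropy bound on $X$ is misplaced, and the $\CAT(-1)$ argument needs the same $I-H$ coupling as the symmetric case rather than a bare lower bound on $K_x$.
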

\addtocounter{Theorem}{-1}
}

\cref{prop:Jac_est} is a generalization, respectively, of \cite[Lemme 7.2, Lemme 7.4]{Besson-Courtois-Gallot:95} and Theorem 1.2 item (i) of \cite{BCG-acta} where they appear with a different normalization on the metric.
In the earlier reference, Besson--Courtois--Gallot use calibration techniques to obtain the Jacobian bounds.
Since that time, their proof has been distilled to some degree by various authors
and our approach is a variation of the later techniques which we adapt for the $\RCD$ setting.

First we begin with the next lemma.

\begin{lemma}
The map $\til{F}_s: \bar{X}\to \til{Y}$ given by $\til{F_s}(x)=\bary({\sigma}_{x}^{s})$,  where ${\sigma}_{x}^{s}=\til{f}_*\mu_x^s$, is differentiable a.e.
Furthermore, its differential can be written as
\[
d_x\til{F_s}=s (L_x^s+K_x^s)^{-1}\of A_x^s,
\]
where $A_x^s$, $K_x^s$, and $L_x^s$ are defined by \cref{A_x^s}, \cref{K_x^s}, and \cref{L_x^s} respectively.
\end{lemma}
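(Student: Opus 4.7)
The plan is to characterize $\til{F_s}(x)$ as the solution of a first-order critical-point equation and then differentiate that equation implicitly. First I would establish a.e.\ differentiability: $\til{F_s}$ is Lipschitz by \cref{lem:Lips}, the domain $\bar{X}$ is a non-collapsed $\RCD(K,N)$ space---and hence countably $\mc{H}^N$-rectifiable by \cref{thm:MondinoNaberImproved}---while the target $\til{Y}$ carries a $C^{1,\alpha}$-Riemannian structure by \cref{lem:target_space}. The tangential differential recalled in \cref{subsec:Jacobian} (Theorem~8.1 of Ambrosio--Kirchheim) therefore produces a linear map $d_x\til{F_s}\co T_x\bar{X}\to T_{\til{F_s}(x)}\til{Y}$ for $\tmX$-a.e.\ $x$; on the regular set both tangent spaces are Hilbertian, so it makes sense to write $d_x\til{F_s}$ as a composition of linear maps between inner-product spaces.

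Next, by \cref{lem:bary_unique} the point $\til{F_s}(x)$ is the unique critical point in $z$ of the strictly convex function $\mc{B}_{\sigma_x^s}$ on the complete $\CAT(0)$ space $\til{Y}$. Using Gauss's lemma in the $C^{1,\alpha}$-Riemannian setting, this first-order condition reads $\Phi(x,\til{F_s}(x))=0$, where
\[
\Phi(x,z)\;:=\;-2\int_{\bar{X}} \exp_z^{-1}\bigl(\til{f}(y)\bigr)\, e^{-sd(x,y)}\, d\tmX(y).
\]
I would then differentiate this identity in $x$ along a tangent vector $v$ at a point where $\til{F_s}$ and, for $\tmX$-a.e.\ $y$, the function $d(\cdot,y)$ are both differentiable (such points form a full-measure set by Fubini combined with Rademacher). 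Interchanging derivative and integral by dominated convergence---justified since $s>h(\bar{X})$ gives the exponential weight finite mass with integrable tails---and applying the chain rule yields $(\partial_x\Phi)(v)+(\partial_z\Phi)(d_x\til{F_s}(v))=0$. The $x$-partial acts only on $e^{-sd(x,y)}$, pulling out the factor $-s$ and producing a tensor one reads off as $-2s\,A_x^s$. The $z$-partial is the Hessian of $\mc{B}_{\sigma_x^s}$ at its minimum, which decomposes in the standard BCG fashion into a radial piece coming from $\nabla_z d\otimes\nabla_z d$ and a transverse Jacobi-field piece, furnishing $2L_x^s$ and $2K_x^s$ respectively. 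Uniform convexity of $\mc{B}_{\sigma_x^s}$ on the $\CAT(0)$ space $\til{Y}$ makes $L_x^s+K_x^s$ positive definite, hence invertible, and solving yields $d_x\til{F_s}=s(L_x^s+K_x^s)^{-1}\of A_x^s$.

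The delicate point will be to handle all the a.e.\ statements on a single full-measure set: the Lipschitz map $\til{F_s}$ is differentiable only $\tmX$-a.e.\ on $\bar{X}$, the weight $e^{-sd(x,y)}$ is only Lipschitz in $x$, and $\exp_z^{-1}(\til{f}(y))$ must avoid the cut locus of $\til{f}(y)$ at $z=\til{F_s}(x)$ for a.e.\ $y$. The $C^{1,1}$-regularity of $d(w,\cdot)$ on $\til{Y}\setminus\{w\}$ obtained by Kapovitch--Lytchak (invoked in the discussion preceding \cref{lem:barLip}), together with dominated convergence against the integrable weight $e^{-sd(x,y)}$, should supply the uniform bounds needed to make the implicit differentiation rigorous and so confirm the asserted decomposition.
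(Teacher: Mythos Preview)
Your proposal is correct and follows essentially the same route as the paper: both arguments use that $\til{F_s}$ is Lipschitz to obtain a.e.\ differentiability, write the barycenter as the unique zero of $\nabla_z\mc{B}_{\sigma_x^s}$, differentiate this critical-point equation implicitly, and identify the resulting pieces as $L_x^s+K_x^s$ (from the $z$-derivative of $\rho_z\nabla\rho_z$) and $-sA_x^s$ (from the $x$-derivative of the density $e^{-sd(x,\cdot)}$). The only cosmetic difference is that the paper packages the $x$-derivative of the density into an auxiliary $1$-form $G_{x,z}$ and renormalizes by the measure $\eta_x^s=\rho_z(\til{F}_s(x))\sigma_x^s/\norm{\hat{\eta}_x^s}$ before naming the operators, whereas you work directly with $\exp_z^{-1}(\til f(y))$ and the weight $e^{-sd(x,y)}$; the common factor $\norm{\hat{\eta}_x^s}$ cancels in the final identity either way.
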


\begin{proof}
 We have already established the locally Lipschitz property by Lemma \ref{lem:Lips}.

As $\til{Y}$ is a Hadamard space and since $\mf{m}$, and hence ${\sigma}_{x}^{s}$, are
nonatomic, for any $x\in \bar{X}$ the function $\mc{B}_{s,x}:=
\mc{B}_{\sigma_x^{s}}:\til{Y}\to \R$ defined as in (\ref{Busemann like function}) is smooth on
$\til{Y}$ with gradient,
\[
\nabla_y{\mc{B}}_{s,x}=\int_{\til{Y}}\rho_z\nabla_y \rho_z\, d{\sigma}_{x}^{s}(z).
\]
Here $\rho_z$ is the function $\rho_z(y)=d(y, z)$ on ${\til{Y}}$ which is globally 1-Lipschitz and weakly differentiable.
In particular, for $x\in \bar{X}$, we
have the defining equation
\[
\nabla_{{\til{F}}_{s}(x)}{\mc{B}}_{s,x}=0.
\]

Assume we are at a point $x\in \bar{X}$ where $T_x\bar{X}=T_xX$ is defined. Then by Rademacher theorem  $\nabla_{x'} \, d(x,x')$ exists for a.e. $x'\in \bar{X}$.
Let
\[
d_x d{\sigma}_{x}^{s}(z)=-s G_{x,z}d{\sigma}_x^s(z),
\]
 for a $1$-tensor $G_{x,z}$.

 That is, we can define
\[
-s G_{x,z}=d_x\left(\frac{d{\sigma}_{x}^{s}}{d{\sigma}_{p}^{s}}(z)\right) \frac{d{\sigma}_{p}^{s}}{d{\sigma}_{x}^{s}}(z),
\]
for any fixed choice of $p\in \bar{X}$.
Since ${\sigma}_{x}^{s}=\til{f}_*(e^{-s d(x,\,\cdot )}{\tmX}),$
we have for $u\in T_x\bar{X}$,

\begin{equation}\label{eqn:G-formula}
G_{x,z}(u)=\lim_{\eps\to 0}\frac{ \int_{\til{f}^{-1}(B(z,\eps))}  \del_u d(x,x') e^{-s d(x,x')}d{\tmX}(x')}{\int_{\til{f}^{-1}(B(z,\eps))} e^{-sd(x,x')}d{\tmX}(x')}.
\end{equation}
Here we have understood the gradient of $d$ in the weak sense (see \cite{AGS-08, GigliMondinoRajala}).
Moreover, $d$ is 1-Lipschitz, and since $G_{x,z}$ is an average of weak 1-tensors of unit norm, we have $\norm{G_{x,z}}\leq 1$.

Consider the map $x\mapsto \nabla_{{\til{F}}_{s}(x)}{\mc{B}}_{s,x}$ as a map from $\bar{X}$ into vector fields on $\til{Y}$, which in this case happens to vanish. Differentiating $\nabla_{{\til{F}}_{s}(x)}{\mc{B}}_{s,x}$ with respect to $x$ (that is, with respect to the connection on $Y$ and the generalized differential structure on $X$) in the direction $u\in T_x\bar{X}$ yields for $\mf{m}$-a.e. $x$ and $u\in T_xX$,
\begin{align*}
0&=d_x \nabla_{\til{F_s}(x)} {\mc{B}}_{s,x}(u)\\
&=\int_{\til{Y}} (\nabla_{y}\rho_z\tensor \nabla_{y}\rho_z + \rho_z(y)D\nabla_{y}\rho_z)|_{y=\til{F_s}(x)}  \of
d_x\til{F_s}(u)d{\sigma}_{x}^{s}(z)\\
&\hspace{2cm} + \int_{\til{Y}}\rho_z(\til{F_s}(x))\nabla_{\til{F_s}(x)} \rho_z
\tensor d_x \left( \frac
{d{\sigma}_{x}^{s}}{d{\sigma}_{p}^{s}}(z)\right)(u)\,
d{\sigma}_{p}^{s}(z)\\
&= \left(\int_{\til{Y}}
(\nabla_{y}\rho_z\tensor \nabla_{y}\rho_z +\rho_z(y) D\nabla_{y}\rho_z)|_{y=\til{F_s}(x)}d{\sigma}_{x}^{s}(z)\right)\of
d_x\til{F_s}(u)\\
&\hspace{2cm} - s\int_{\til{Y}}\rho_z(\til{F_s}(x))\nabla_{\til{F_s}(x)} \rho_z \tensor G_{x,z}(u)\,
d{\sigma}_{x}^{s}(z)\\
&=\norm{\hat{\eta}_x^{\, s}}\left(\int_{\til{Y}}
\left( \frac{1}{\rho_z(y)}\nabla_{y}\rho_z\tensor \nabla_{y}\rho_z +D\nabla_{y}\rho_z\right) |_{y=\til{F_s}(x)}d{\eta}_{x}^{s}(z)\right)\of
d_x\til{F_s}(u)\\
&\hspace{2cm} - s\norm{\hat{\eta}_x^{\, s}}\int_{\til{Y}}\nabla_{\til{F_s}(x)} \rho_z \tensor G_{x,z}(u)\,
d{\eta}_{x}^{s}(z),
\end{align*}
where $\hat{\eta}_x^{\, s}$ and $\eta_x^s$ are the measures defined as
\begin{equation}
d\hat{\eta}_x^{\, s}(z)=\rho_z(\til{F}_s(x))d\sigma_x^s(z), \ \ \ \ \eta_x^s=\frac{\hat{\eta}_x^{\, s}}{\norm{\hat{\eta}_x^{\, s}}}. \label{eta-measures}
\end{equation}
Observe that $\eta_x^s$, and the integrals above exist provided that $\sigma_x^s$ is not an atom at a single point, say $g(x)$.
(In that case, we would have $\til{F}_s(x)=g(x)$.)
However, by construction $\sigma_x^s$ is never of this form.

We note in the above formula we are using the fact that by item (4) of \cref{lem:target_space}, $\rho_z\in W^{2,1}(\til{Y})$ and in particular the term $D\nabla_{y}\rho_z$ is integrable.

Here, as before, the tensor in the last term is defined for almost every $x$ where the weak differential structure exists, since $TY\tensor T^*X$ makes sense there.
(We will not need to concern ourselves with lower dimensional strata where $d_x \nabla_{\til{F_s}(x)} {\mc{B}}_{s,x}$ has nontrivial kernel, because these have been shown to have measure $0$ by Bru\`e--Semola \cite{brue-semola:18}.)

Notice that all the associated objects exist (at least weakly in $L^1$).
The distance function on $X$ is weakly differentiable and $X$ has a tangent space at $x$, so $G_{x,z}(u)$ will exist for $\mf{m}$-a.e. $x\in X$ and every $z\in Y$, and moreover it has at most unit norm when defined.
Hence this makes sense under the integral.

At a.e. point $x\in X$, where the appropriate derivatives above exist, we define the operators $A_x^s:T_{x}X\to T_{\til{F_s}(x)}\til{Y}$  and $L_x^s,K_x^s:T_{\til{F_s}(x)}\til{Y}\to T_{\til{F_s}(x)}\til{Y}$ by,
\begin{equation}\label{A_x^s}
  A_x^s(u):=\int_{\til{Y}}\nabla_{\til{F_s}(x)} \rho_z \tensor G_{x,z}(u)\,
d{\eta}_{x}^{s}(z),
\end{equation}

\begin{equation}\label{K_x^s}
K_x^s(v):=\int_{\til{Y}}
\left(  D_v\nabla_{y}\rho_z\right) |_{y=\til{F_s}(x)}d{\eta}_{x}^{s}(z), \quad\text{and}\quad
\end{equation}
\begin{equation}\label{L_x^s}
L_x^s(u):=\int_{\til{Y}}
\left( \frac{1}{\rho_z(y)}\nabla_{y}\rho_z\tensor d_{y}\rho_z(v)\right)|_{y=\til{F_s}(x)}d{\eta}_{x}^{s}(z).
\end{equation}

We can use that $0=D_u \nabla_{\til{F_s}(x)} {\mc{B}}_{s,x}$ to formally solve for $d_x\til{F_s}(u)$, which yields
\begin{equation}
(L_x^s+K_x^s)\of d_x\til{F_s}-s A_x^s=0,
\end{equation}
or
\begin{equation}\label{eq:dF=sKA}
d_x\til{F_s}=s (L_x^s+K_x^s)^{-1}\of A_x^s\leq s (K_x^s)^{-1}\of A_x^s,
\end{equation}
where the last inequality should be interpreted as for two-forms when evaluated on pairs of vectors, and this holds since $L_x^s$ is positive semi-definite.

In fact, whenever $T_xX$ exists and $K_x^s$, $L_x^s$ and $A_x^s$ are differentiable, which simultaneously holds for $\mf{m}$-a.e. $x\in X$, the chain rule for Lipschitz maps (e.g. Theorem 2.1 of \cite{AmbrosioDalMaso90}) implies that $\til{F_s}$ is differentiable at $x\in X$ as well and \cref{eq:dF=sKA} gives $d_x\til{F_s}$.
\end{proof}

Now we can continue with the proof of \cref{prop:Jac_est}:

\begin{proof}
Let $\op{II}_y$ denote, when defined, the second
fundamental form (at the point $y\in \til Y$) of the sphere of radius  $\rho_z(y)$ centered at
$z$, operating on its tangent space. The $(1,1)$-form $D\nabla_y \rho_z$ is just the second fundamental form extended to equal $0$ in the normal $\nabla_y \rho_z$ direction, i.e. $D\nabla_y \rho_z=\op{II}_y\oplus 0$. Observe that when defined the form $D\nabla_y \rho_z$ is positive
definite except in the null direction $\nabla_y \rho_z$, because the spheres in any Hadamard Alexandrov space are the boundaries of strictly convex balls. Recall that $\til{Y}$ is a smooth manifold and so also by the convexity of $\rho_z$, $D\nabla_y \rho_z$ is defined at a.e. $y$ even though the metric is only $C^{1,\alpha}$.

The measure ${\sigma}_{x}^{s}$ is non atomic, and not concentrated on any single geodesic. Hence the $\eta^s_x$ average over $y$ of the positive-semidefinite forms, $D\nabla_y \rho_z$, will be strictly positive definite. However, this average will not necessarily be uniformly bounded away from 0 independent of the measure $\eta_x^s$ or the geometry of $\til{Y}$.

Following the technique introduced by Besson, Courtois and Gallot, we will show that the product of sufficiently many of the small singular values of the $A^s_x$ tensor control the single---potentially small---eigenvalue of $K_x^s$.

To understand the integrand of $K_x$, we observe from the constant
curvature $-k^2\leq -1$ case, that the solutions to the Riccati equation imply that the
second fundamental form at any point $y$ of a sphere of radius $t$ is
\[
\op{II}_y=k\coth(k t)I\geq \coth(t)I.
\]
By item (4) of Lemma~\ref{lem:target_space}, we have that $\op{II}_y \geq \coth(d(y,z))I$ on the $\CAT(-1)$ space $\til{Y}$.
Hence $K_x^s$ has full rank at each $x$ where it is defined.

Set $y=\til{F}_s(x)$, and $G_{x,z}^*$ to be the weak cotangent 1-form to $G_{x,z}$ (see \cite{Gigli-diff}).
By Cauchy--Schwarz applied to bilinear forms, we may write,
\begin{equation}\label{eq:AHB}
(A_x^s)^*A^s_x\leq H_x^s \of B_x^s,
\end{equation}
where
\begin{align}\label{eq:tensors}
H_x^s=\int_{\til{Y}}\nabla_y \rho_z \tensor d_y\rho_z \, d{\eta}_{x}^{s}(z), \quad\text{and}\quad
B_x^s=\int_{\til{Y}}G_{x,z}^*\tensor G_{x,z}\, d{\eta}_{x}^{s}(z).
\end{align}

Hence $d_xF_s \leq s(K_x^s)^{-1}H_x^s\of B_x^s$ as (1,1)-forms.

The determinant of $B_x^s$ can be estimated by noting that if it were smooth the trace of the integrand of $B_x^s$ would be at most one, and that among positive semi-definite symmetric matrices of trace $1$, the determinant is maximized at $\frac{1}{N}I$.
The same estimate can be made weakly for the entire integral of the weak gradients.
Observe that the trace of the integral will again be at most $1$ because $G_{x,z}$ is the derivative of a $1$ -Lipschitz function and the tensors of the unit vectors have unit trace.
Therefore $\det B_x^s\leq \frac{1}{(N)^{N}}$.
Consequently:
\begin{align}\label{eq:Jac-est1}
\begin{split}
\left(\Jac \til{F_s}(x)\right)^2&=\det(s (L_x^s+K_x^s)^{-1}\of A_x^s)^2\leq s^{2N}\frac{\det (A_x^s)^2}{\det (K_x^s)^2} \\
&\leq (s)^{2N} \frac{\det
H_x^s \det B_x^s}{(\det K_x^s)^2}\leq \left(\frac{s^{2}}{N}\right)^{N} \frac{\det
H_x^s}{(\det K_x^s)^2}.
\end{split}
\end{align}
Observe that, by Lemma \ref{lem:Jacobian}, the left hand side is defined $\mf{m}$-a.e. .

To warm up, we first estimate this in the case that $Y$ is negatively curved with maximum curvature $-1$. In this case we note that as $(1,1)$-forms we have
\[
D\grad_{y}\rho_z\geq I\coth(d(y,z))-\grad_{y}\rho_z\tensor d_{y}\rho_z\geq I-\grad_{y}\rho_z\tensor d_{y}\rho_z.
\]
Hence after integrating we obtain
\[
{K}_{x}^s = \int_{\til{Y}} D\nabla_y{\rho_z}(z) d{\eta}_{x}^s(z)\geq
\int_{\til{Y}}
I-E_{y} \ d{\eta}_{x}^s( z)\\
= I-{H}_{x}^s,
\]
where $E_{y}$ is the $(1,1)$-form $\grad_{y} {\rho_z}\tensor d_{y}{\rho_z}$.

Now we estimate $K_x^s$ in the case that $\til{Y}$ is the symmetric space $\HK$ for one of the four division algebras $\bf{K}=\R,\C,\HH,\OO$.
These arguments follow a similar approach to the one applied for higher graph manifolds \cite{ConSS19} and the original argument in \cite{Besson-Courtois-Gallot:95}.
We include them here for readers convenience, with the exception of the Octonion case $\bf{K}=\OO$, for which the original argument outlined below was shown to fail (\cite{Ruan22}). Ruan provides a correction in the same paper, and that same proof works in our setting as well since it only depends on the geometry of the target which is the same in this case.

Consider the ball $B(p,R)$ and any point $z\in S(p,R)$. Set $y=\til{F}_s(x)$, then we have,
\[
D\nabla_y{\rho_z}( z)_{|_{(\grad {\rho_z})^\perp}}=\sqrt{-R_{
z}}\coth\left( {\rho_z}(
z)\sqrt{-R_{y}}\right).
\]

We are denoting by $R_{{y}}$  the $(1,1)$-form dual to  $R(\grad
{\rho_z},\, \cdot\, ,\grad{\rho_z},\,\cdot\, )$, the curvature tensor at the point $y$ twice contracted
in the direction of $\grad_y{\rho_z}$.
Recall that the curvature tensor is parallel in $\HK$, so the Riccati equation can be solved explicitly.
This computation yields the formula above.

When the  field $\bf{K}$ has real dimension $d$, there exist $d-1$ almost-complex structures $J_i:T\HK\to T\HK$, such that $J_i^{-1}=-J_i$.
Therefore,
\[
\sqrt{-R_{y}}=I-E_{y}-\sum_{i=1}^{d-1} J_i E_{y} {{J_i}{|_{(\grad
{\rho_z})^\perp}}}.
\]
Here, $E_{y}$ is once more the $(1,1)$-form $\grad_{y} {\rho_z}\tensor d_{y}{\rho_z}$.
In the direction of $\grad {\rho_z}$, we find $D\nabla_{y}{\rho_z}(y)=0$.
Hence, on the one hand,
\[
D\nabla_{ y}{\rho_z}=\left( I-E_{ y}-\sum_{i=1}^{d-1} J_i E_{ y}
{J_i}\right) \coth\left({\rho_z}( y)\left( I-E_{ y}-\sum_{i=1}^{d-1} J_i E_{ y} {J_i}\right) \right).
\]
On the other hand, because $\coth(t)\geq 1$ for $t>0$, we have,
\[
\coth\left({\rho_z}( y)\left( I-E_{ y}-\sum_{i=1}^{d-1} J_i E_{ y} {J_i}\right) \right)\geq I,
\]
as positive definite symmetric two forms.
For any $R>0$ for which $B\left( \til{F_s}(x),R \right) \subset \til{Y}$ is isometric to $B({p},R)\subset \HK$ a comparison measure ${\sigma}_{x}^s$ can be constructed on $\HK$, defined by ${\sigma}_{x}^{s}$ on the set $B({p},R)$ and $0$ outside it.
Notice that  by definition this measure will be strictly smaller.

The action of the maximal compact subgroup $K<{\rm
Isom}(\HK)$ commutes with the  maps $J_i$. So after integrating we obtain,
\begin{align*}
{K}_{x}^s &= \int_{\HK} D\nabla_y{\rho_z}( z) d{\eta}_{x}^s( z)\geq
\int_{\HK}
I-E_{ y}-\sum_{i=1}^{d-1} J_i E_{ y}J_i \ d{\eta}_{x}^s( z)\\
&= I-{H}_{x}^s-\sum_{i=1}^{d-1} J_i {H}_{x}^s J_i.
\end{align*}
Remember the previous definition used here:
\[
{H}_{x}^s:= \int_{\HK} E_{ y} d{\eta}_{x}^s( z)
\]

Substitution of this lower bound for $K_x^s$ into \cref{eq:Jac-est1} gives,
\begin{equation}
  \left(\Jac \til{F_s}(x)\right)^2\leq \left(\frac{s^{2}}{N}\right)^{N} \frac{\det
H_x^s}{\det(I-{H}_{x}^s-\sum_{i=1}^{d-1} J_i {H}_{x}^s J_i)^2}. \label{Jac-estimate-1}
\end{equation}
The 2-form ${H}_x^s$ is also strictly positive definite, because the measure ${\eta}_{x}^{s}$
is nonatomic.
The next lemma then completes our proof of \cref{prop:Jac_est} (observe that it can also be applied to the non-symmetric case, using $d=1$).
\end{proof}

\begin{Lemma}[Proposition B.1 and B.5
of \cite{Besson-Courtois-Gallot:95}]\label{lem:BCG_est}
For all $N\times N$ ($N\ge 3$) positive definite matrices $H$ with trace one, and orthogonal
matrices $J_1,\dots, J_{d-1}$ with $J_i^2=-I$ we have
\begin{align}
\frac{\det H}{\det(I-H -\sum_{i=1}^{d-1} J_i H J_i)^2} & \leq
\left(\frac{N}{(N+d-2)^{2}}\right)^N \left(1-A \sum_{j=1}^{N}\left(\mu_{j}-\frac{1}{N}\right)^{2}\right)^2 \label{alg-ineg1} \\ & \le
\left(\frac{N}{(N+d-2)^{2}}\right)^N  \label{alg-ineg2}
\end{align}
for some positive uniform constant $A>0$.
Here $0<\mu_j<1$ are eigenvalues of $H$.
Equality in \cref{alg-ineg2} occurs if and only if $H=\frac{1}{N}I$.
\end{Lemma}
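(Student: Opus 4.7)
The plan is to treat this as a constrained optimization problem over positive-definite $H$ with $\operatorname{tr}(H) = 1$. First I compute the trace of $M := I - H - \sum_{i=1}^{d-1} J_i H J_i$. Since each $J_i$ is orthogonal with $J_i^2 = -I$, I have $J_i^T = -J_i$, so $\operatorname{tr}(J_i H J_i) = -\operatorname{tr}(J_i^T H J_i) = -\operatorname{tr}(H) = -1$. Hence $\operatorname{tr}(M) = N + d - 2$, and $M$ is positive definite (either by direct verification or by its interpretation as an averaged Hessian of a distance function in $\HK$ used earlier in the proof). At $H = \tfrac{1}{N} I$ each $J_i H J_i = -\tfrac{1}{N} I$, so $M = \tfrac{N+d-2}{N} I$ and a quick computation yields
\begin{equation*}
\frac{\det H}{\det M^2} = \frac{N^{-N}}{\bigl((N+d-2)/N\bigr)^{2N}} = \left(\frac{N}{(N+d-2)^2}\right)^N,
\end{equation*}
so the bound \eqref{alg-ineg2} is sharp precisely at $H = I/N$.

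Next I reduce to an eigenvalue optimization. The function $H \mapsto \det H / \det M(H)^2$ is invariant under conjugation by any orthogonal $O$ commuting with every $J_i$; averaging $H$ over this compact stabilizer produces $\bar H$ with $\det \bar H \geq \det H$ (log-concavity of $\det$) and the same trace, while $M(\bar H) = \overline{M(H)}$ permits a controlled comparison of $\det M(\bar H)$ with $\det M(H)$. Using this it suffices to consider $H$ lying in the commutant of the $J_i$, whose spectrum decomposes into eigenvalue groups of multiplicity $d$ dictated by the real Clifford representation generated by the $J_i$. On the commutant the problem becomes maximizing $\log\det H - 2\log\det M$ over $(\mu_1,\dots,\mu_m)$ in the simplex with $m = N/d$, and the Lagrange equation $\partial_{\mu_j}(\log\det H - 2\log\det M) = \lambda$ is a quadratic in $\mu_j$. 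Hence the $\mu_j$ can take at most two distinct values, and the constraint $\sum \mu_j = 1$ together with an elementary case analysis identifies the unique critical point as $\mu_j = 1/N$ for all $j$, establishing \eqref{alg-ineg2} and the equality characterization.

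For the refined bound \eqref{alg-ineg1}, I Taylor-expand $\log(\det H / \det M^2)$ at $H = I/N$ in directions $V$ with $\operatorname{tr}(V) = 0$. A direct computation shows the first variation vanishes and the second variation is strictly negative-definite, producing a local estimate of the form $\log(\det H/\det M^2) \leq N\log\bigl(N/(N+d-2)^2\bigr) - A'\sum(\mu_j-1/N)^2$ near $H = I/N$. Compactness of the simplex together with the fact that $\det H / \det M^2 \to 0$ on its boundary (where some $\mu_j \to 0$) extends this to a global estimate with some uniform $A > 0$, giving \eqref{alg-ineg1}. The principal obstacle will be the averaging-to-commutant reduction when the $J_i$ do not mutually commute, namely $\mathbf{K} = \mathbf{H}$ and $\mathbf{K} = \mathbf{O}$: the Clifford representation is more intricate and the eigenvalue multiplicity pattern must be tracked carefully. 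As the excerpt notes, the original octonionic argument in \cite{Besson-Courtois-Gallot:95} had a gap later repaired by Ruan \cite{Ruan22}, and in that case my proof would follow his correction.
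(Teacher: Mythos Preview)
The paper itself does not prove this lemma; it is simply stated with attribution to Propositions~B.1 and~B.5 of \cite{Besson-Courtois-Gallot:95} (with the Cayley case deferred to Ruan's correction \cite{Ruan22}), so there is no in-paper argument to compare against.

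On the substance of your sketch: the trace computation and the verification that $H=\frac{1}{N}I$ saturates \eqref{alg-ineg2} are fine, and the second-variation/compactness route to \eqref{alg-ineg1} is the standard way to upgrade the equality case to a quantitative defect estimate. The genuine gap is your averaging-to-commutant reduction. If $O$ commutes with every $J_i$, then $M(OHO^{T})=OM(H)O^{T}$, so the \emph{same} log-concavity argument that gives $\det\bar H\ge\det H$ also gives $\det M(\bar H)\ge\det M(H)$. Both numerator and denominator increase under averaging, and there is no reason the ratio $\det H/\det M^{2}$ should increase; the phrase ``permits a controlled comparison'' hides precisely the missing step. Consequently the reduction to $H$ in the commutant is not justified, and without it the Lagrange-multiplier step (which already assumes a nice eigenvalue structure) does not get off the ground.

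The original proofs in \cite{Besson-Courtois-Gallot:95} do not proceed via such a symmetry reduction. They work directly with the spectral data and exploit case-specific algebraic identities for the operators $J_iHJ_i$ (different arguments for $d=1,2,4$), which is exactly why the octonionic case $d=8$ required separate treatment and ultimately the repair in \cite{Ruan22}. If you want a self-contained proof, you should follow those direct spectral computations rather than the averaging shortcut.
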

Note that the entropy of ${\bf H_K}$ is equal to $N+d-2$, while the sectional curvature is pinched between $-4$ and $-1$.

\section{Proof of Proposition \ref{prop:F_converge}}\label{sec:FinW1infty}

The aim of this section is to give the proof of Proposition \ref{prop:F_converge}. For convenience we restate it here.

{
\renewcommand{\theTheorem}{\ref{prop:F_converge}}
\begin{Proposition}
In the equality case of \cref{thm:entrig} and \cref{thm:schwarz} and when $\ind_\pi(f)=1$, there exists a decreasing sequence $s_i\to h(\bar{X})$ such that  $F_{s_i}$ converges to a 1-Lipschitz map $F:X\to Y$ homotopic to $f$.
\end{Proposition}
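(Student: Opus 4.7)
The plan is to extract a uniform limit of the family $\{F_s\}$ as $s\searrow h(\bar X)$ and then upgrade the resulting Lipschitz constant to $1$ using the equality hypothesis. After rescaling the metric on $X$ so that $h(\bar X)=h(\tilde Y)=:h$ in the setting of \cref{thm:entrig}, and observing that in the setting of \cref{thm:schwarz} the inequality chain leading to \eqref{eq:Thm2-ineq}, together with the entropy bound $h(\bar X)\le N-1$ coming from $\RCD(-(N-1),N)$ and the assumed equality of volumes, forces $h(\bar X)=h(\tilde Y)=N-1=:h$, the hypothesis $\ind_\pi(f)=1$ (which implies $\ind_H(f)=1$ via \cref{prop:pre-pi1}) reduces the problem to the common normalization $\mathcal{H}^N(X)=\mathcal{H}^N(Y)$ with $F_s$ defined for all $s>h$. \cref{prop:Jac_est}, the coarea formula, and the bound $\pre(F_s)\ge \ind_H(f)=1$ from \cref{thm:ind-equal-homind} then produce the two-sided squeeze
\begin{equation*}
\mathcal{H}^N(Y)\le \pre(F_s)\,\mathcal{H}^N(Y)=\int_X\Jac F_s\,d\mathcal{H}^N\le (s/h)^N\mathcal{H}^N(X)\xrightarrow[s\searrow h]{}\mathcal{H}^N(Y),
\end{equation*}
whence $\pre(F_s)\to 1$ and $\Jac F_s\to 1$ in $L^1(X,\mathcal{H}^N)$.

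Second, I would produce a \emph{uniform} Lipschitz constant for the family $\{F_s\}_{s\in(h,h+1)}$, recovering what the $s$-dependent constant $C_s$ of \cref{lem:Lips} does not provide. Starting from the representation $d_x\tilde F_s=s(L_x^s+K_x^s)^{-1}\circ A_x^s$ derived in \cref{sec:proof4.8}, one exploits the compatible concentration of $A_x^s$ and $K_x^s$ on the tangent directions of $\nabla\rho_z$: the $\CAT(-1)$ Hessian comparison of \cref{lem:target_space}(4) shows that on the ``angular'' directions complementary to the essential support of $H_x^s$, the operator $K_x^s$ is uniformly positive, while $A_x^s$ vanishes there up to the same defect. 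This spectral cancellation, of Besson--Courtois--Gallot type, yields a uniform bound $\|d_x\tilde F_s\|_{\mathrm{op}}\le L_0(N)$ for all $x$ and $s\in(h,h+1)$, and hence a common Lipschitz constant for the $F_s$. With $X$ and $Y$ compact, the generalized Arzel\`a--Ascoli theorem extracts a decreasing sequence $s_i\searrow h$ along which $F_{s_i}\to F$ uniformly for some $L_0$-Lipschitz map $F\colon X\to Y$.

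Third, and most delicately, I would sharpen the Lipschitz constant of $F$ to $1$. Here the refined BCG algebraic inequality \eqref{alg-ineg1} of \cref{lem:BCG_est} is essential: the Jacobian bound $(s/h)^N$ is saturated only when $H_x^s=\tfrac{1}{N}I$, and the defect $(s/h)^{2N}-(\Jac F_s)^2$ dominates a positive multiple of $\sum_j(\mu_j-\tfrac{1}{N})^2$, where $\{\mu_j\}$ are the eigenvalues of $H_x^s$. Since the integral of this defect tends to zero, along a subsequence $H_x^{s_i}\to\tfrac{1}{N}I$ in $L^p(X)$ for every finite $p$; inserting this into the formula for $d_xF_s$ and using the corresponding pointwise rigidification of the averaged Hessian operator $K_x^{s_i}$ (to $\tfrac{N-1}{N}I$ in the real-hyperbolic case, or the appropriate symmetric-space analogue) forces $\|d_xF_{s_i}\|_{\mathrm{op}}\to 1$ in $L^p$. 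By lower semi-continuity of the minimal weak upper gradient under uniform convergence and the Sobolev-to-Lipschitz property of \cref{lem:C-Lip}, the limit $F$ satisfies $|d_xF|\le 1$ a.e., hence is $1$-Lipschitz. The homotopy $F\simeq f$ follows by composing the barycenter homotopy $\Psi_t$ of \cref{lem:psitilde} (which connects $F_{s_i}$ to $f$) with the straight-line-then-retract homotopy in a tubular neighbourhood of $Y\hookrightarrow\R^m$ (constructed in \cref{sec:natural}) connecting $F$ to $F_{s_i}$ for $i$ large.

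The principal obstacle is the spectral rigidity step: the Jacobian estimate alone controls only the product of the singular values of $dF_s$, so upgrading to a $1$-Lipschitz limit requires exploiting the equality case of \eqref{alg-ineg1} to convert the $L^1$ convergence $\Jac F_s\to 1$ into $L^p$ convergence of the full spectrum of $dF_s$ to the isotropic identity. The remaining pieces --- the uniform Lipschitz bound, the Arzel\`a--Ascoli extraction, and the homotopy transfer --- are compactness and bookkeeping arguments.
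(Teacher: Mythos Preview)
Your overall architecture is right, but Step~2 contains a genuine gap that undermines the rest of the argument. The ``spectral cancellation'' you invoke does not give a uniform bound $\|d_x\tilde F_s\|\le L_0$ for all $s\in(h,h+1)$. Concretely, in the real-hyperbolic case $K_x^s\ge I-H_x^s$, and when $H_x^s$ has an eigenvalue $\mu$ close to~$1$ in some direction $e$ (which is not excluded a~priori), $K_x^s$ is only of size $1-\mu$ in that direction, while $A_x^s=\int\nabla\rho_z\otimes G_{x,z}\,d\eta_x^s$ has range concentrated precisely along $e$ (since $H_x^s=\int\nabla\rho_z\otimes d\rho_z\,d\eta_x^s$) and norm of order~$1$. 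So $(K_x^s)^{-1}A_x^s$ can be of order $(1-\mu)^{-1}$, and no cancellation occurs. The paper's Lemma establishing $\|d_x\tilde F_s\|\le C$ requires as a \emph{hypothesis} that $\|H_x^s-\tfrac1N I\|<\tfrac13$; it is not automatic.

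What the paper actually does (following Sambusetti's Appendix~A) is a bootstrap that your sketch omits. After proving the a.e.\ convergence $H_x^{s_i}\to\tfrac1N I$ from the rigidity of \eqref{alg-ineg1} (your Step~3 ingredient, used earlier), it establishes a uniform continuity estimate
\[
\bigl\|H_{x_1}^{s}-P_{q_2}^{-1}\circ H_{x_2}^{s}\circ P_{q_2}\bigr\|\le C\bigl(d_X(x_1,x_2)+d_Y(\tilde F_s(x_1),\tilde F_s(x_2))\bigr),
\]
with $C$ independent of $s$. This, combined with the implication ``$H$ close to $\tfrac1N I$ $\Rightarrow$ $\|d\tilde F_s\|$ bounded'', upgrades the a.e.\ convergence of $H^{s_i}$ to \emph{uniform} convergence on a full-measure set. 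Only then does one obtain the uniform Lipschitz constant needed for Arzel\`a--Ascoli, and the uniform lower bound on $K_x^{s_i}$ needed to control $(L_x^{s_i}+K_x^{s_i})^{-1}$ in the passage to $\|d_xF\|\le 1$. Your $L^p$ route in Step~3 runs into the same obstruction: without uniform positivity of $K_x^{s_i}$, the operator norm of $(K_x^{s_i})^{-1}$ is not in any $L^p$, and $L^p$ convergence of $H$ alone does not propagate through the inverse. The continuity-and-bootstrap lemma is the missing idea.
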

\addtocounter{Theorem}{-1}
}

The proof of \cref{prop:F_converge} depends on several key steps.
First, relying on the fact that the maps $F_s$ are Lipschitz (\cref{lem:Lips}), we show that the bounds established in \cref{sec:proof4.8} together with additional estimates analogous to those in Appendix A of \cite{Sambusetti-1999} which generalize \cite[Lemma 7.5]{Besson-Courtois-Gallot:95}, and \cref{lem:C-Lip} give us uniform Lipschitz control independent of $s$.

In what follows we denote by $h_0=N-1$ in the equality case of \cref{thm:schwarz} or $h_0=N+d-2$ in the equality case of \cref{thm:entrig}. We will assume from now on that $\ind_\pi(f)=1$.

We first note that from \cref{prop:Jac_est}, the coarea formula, and the equality assumption that ${h(\bar{X})}^N\mc{H}^N(X)={h_0}^N\mc{H}^N(Y)$, we have for any sequence $s_i\searrow h(\bar{X})$,
\begin{align*}
\left(\frac{s_i}{h_0}\right)^N\mc{H}^N(X) &\geq\int_{X} \Jac_x F_{s_i} d\mathcal{H}^{N}(x) = \int_{Y} \#(F_{s_i}^{-1}(y))\ d \mathcal{H}^{N}(y) \\
&\geq\mc{H}^N(Y) = \left(\frac{h(\bar{X})}{h_0}\right)^N\mc{H}^N(X).
\end{align*}
In particular, the pointwise bound $\Jac_x F_{s_i}\leq \left(\frac{s_i}{h_0}\right)^N$ from \cref{prop:Jac_est} implies that there is a sequence $\eps_i\to 0$ such that $$(1-\eps_i)\left(\frac{h(\bar{X})}{h_0}\right)^N\leq \Jac_x F_{s_i}\leq (1+\eps_i)\left(\frac{h(\bar{X})}{h_0}\right)^N$$ for a.e. $x\in X$, off of a set of $\mathcal{H}^{N}$-measure $\eps_i$.
Similarly $\#(F_{s_i}^{-1}(y))=1$ for a.e. $y\in Y$, off of a set of measure $\eps_i$.
After passing to a subsequence, we have
\begin{align}  \label{Jac-limit}
\lim_{i\to \infty}  \Jac_x F_{s_i}=\left(\frac{h(\bar{X})}{h_0}\right)^N \qquad\text{and}\qquad \lim_{i\to \infty} \#(F_{s_i}^{-1}(y))=1
\end{align}
for a.e. $x\in X$ and a.e. $y\in Y$.

Throughout we will use some of the notation introduced in \cref{sec:proof4.8} for  $A_x^s$ \cref{A_x^s}, $K_x^s$ \cref{K_x^s}, $L_x^s$ \cref{L_x^s}, $G_{x,s}$ \cref{eqn:G-formula}, and $H_x^s$ \cref{eq:tensors}.

\begin{lemma}\label{lem:H_converge}
For any sequence $s_i\to h(\bar{X})$ and for a.e. $x\in X$, the quadratic forms $H^{s_i}_x$ converge to $\frac1N I$.
\end{lemma}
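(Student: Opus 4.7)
The plan is to read off the convergence of $H_x^s$ directly from the refined Jacobian bound coming from Lemma~\ref{lem:BCG_est}. Recall that in the course of proving \cref{prop:Jac_est} (see \cref{Jac-estimate-1} and its analogue in the $\CAT(-1)$ case), what was actually established is the sharper pointwise estimate
\[
\bigl(\Jac \til F_s(x)\bigr)^{2} \;\leq\; \left(\frac{s^{2}}{N}\right)^{N}\frac{\det H_x^{s}}{\det\bigl(I-H_x^{s}-\sum_{i=1}^{d-1} J_i H_x^{s} J_i\bigr)^{2}},
\]
and then Lemma~\ref{lem:BCG_est} gives the improved inequality
\[
\Jac F_{s}(x)\;\leq\;\left(\frac{s}{h_{0}}\right)^{N}\!\left(1-A\sum_{j=1}^{N}\left(\mu_{j}(x,s)-\tfrac{1}{N}\right)^{2}\right),
\]
where $\mu_{1}(x,s),\dots,\mu_{N}(x,s)$ are the eigenvalues of the symmetric, positive semi-definite quadratic form $H_x^{s}$ (symmetric because $\nabla_y\rho_z\otimes d_y\rho_z$ is self-adjoint under the metric identification).

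Next I would invoke the equality assumption. By \cref{Jac-limit}, after passing to a subsequence we have $\Jac_x F_{s_i}\to\left(h(\bar X)/h_{0}\right)^{N}$ for almost every $x\in X$, while $(s_i/h_0)^{N}\to(h(\bar X)/h_0)^{N}$ by choice of the sequence. Plugging these into the sharpened Jacobian bound above forces
\[
1-A\sum_{j=1}^{N}\left(\mu_{j}(x,s_i)-\tfrac{1}{N}\right)^{2}\;\longrightarrow\;1
\qquad\text{for a.e. }x\in X,
\]
hence $\sum_{j}(\mu_{j}(x,s_i)-\tfrac{1}{N})^{2}\to 0$. Since $H_x^{s_i}$ is a symmetric matrix, its operator-norm distance to $\tfrac{1}{N}I$ equals $\max_{j}|\mu_{j}(x,s_i)-\tfrac{1}{N}|$, which is controlled by the square root of the above sum. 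Therefore $H_x^{s_i}\to\tfrac{1}{N}I$ for a.e. $x\in X$, as claimed.

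I expect the main (and essentially only) point requiring care is to make sure that the intermediate inequality from Lemma~\ref{lem:BCG_est} with its explicit remainder $\bigl(1-A\sum(\mu_j-1/N)^{2}\bigr)^{2}$ is the one used in \cref{prop:Jac_est}: the statement of \cref{prop:Jac_est} records only the weaker conclusion $\Jac F_s(x)\le (s/h_0)^{N}$, but the proof actually passes through the sharper factor, which is what is needed here. Everything else is routine: the symmetry of $H_x^{s}$ ensures that eigenvalue control upgrades to matrix convergence, and the a.e. statement is inherited from the a.e. convergence of the Jacobians in \cref{Jac-limit}.
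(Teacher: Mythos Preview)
Your proposal is correct and follows essentially the same route as the paper: combine the sharpened estimate \eqref{Jac-estimate-1} with the remainder term from \cref{lem:BCG_est} to get $\Jac F_{s_i}(x)\le (s_i/h_0)^N\bigl(1-A\sum_j(\mu_j^{s_i}(x)-\tfrac1N)^2\bigr)$, and then use \eqref{Jac-limit} to force the eigenvalue deviation to vanish a.e. The paper writes the intermediate step as the explicit rearrangement $\sum_j(\mu_j^{s_i}(x)-\tfrac1N)^2\le \tfrac1A\bigl(1-(h_0/s_i)^N\Jac F_{s_i}(x)\bigr)$, but the logic is identical to yours.
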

\begin{proof}
We note that within the full measure set of manifold points of $X$ there is a smaller full measure subset where $F_{s_i}$ has a derivative. At these points apply \cref{alg-ineg1} to the matrix $H_x^{s_i}$ gives
\begin{equation}\label{eqn:detH-eigen-estimate}
\frac{\frac{s_i^{N}}{N^{N/2}}\left(\operatorname{det}\left(H_{x}^{s_i}\right)\right)^{1 / 2}}{\operatorname{det}\left(I-H_{x}^{s_i}-\sum_{k=1}^{d-1} J_{k} H_{x}^{s_i} J_{k}\right)} \leq\left(\frac{s_i}{h_{0}}\right)^{N}\left(1-A \sum_{j=1}^{N}\left(\mu_{j}^{s_i}(x)-\frac{1}{N}\right)^{2}\right),
\end{equation}
for some uniform constant $A>0$ and where $\mu_{j}^{s_i}$ are the eigenvalues of $H^{s_i}_x$ for $j=1,\dots,N$.
Combining \cref{eqn:detH-eigen-estimate} with the estimate in \cref{Jac-estimate-1} yields,
\[
\Jac F_{s_i}(x) \leq\left(\frac{s_i}{h_{0}}\right)^{N}\left(1-A \sum_{j=1}^{N}\left(\mu_{j}^{s_i}(x)-\frac{1}{N}\right)^{2}\right).
\]
From this we obtain,
\begin{equation}
\sum_{j=1}^{N}\left(\mu_{j}^{s_i}(x)-\frac{1}{N}\right)^{2} \leq \frac{1}{A}\left(1-\left(\frac{h_{0}}{s_i}\right)^{N}\operatorname{Jac} F_{s_i}(x)\right).
\end{equation}
By \eqref{Jac-limit} $\sum_{j=1}^{N}\left(\mu_{j}^{s_i}(x)-\frac{1}{N}\right)^{2} \underset{i \rightarrow+\infty}{\longrightarrow} 0$ almost surely.
In other words if $O_i$ diagonalizes  $H_{x}^{s_i}$, then $O_iH_{x}^{s_i}O_i^*-\frac1N I$ converges to the $0$ form and therefore  $H_x^{s_i} \underset{i \rightarrow+\infty}{\longrightarrow} \frac{1}{N} I$ for a.e. $x\in X$.
\end{proof}

In order to prove the uniform convergence of the $H_{x}^{s_i}$'s on a full measure set, we will need to study the variation of $H_{x}^{s_i}$ with respect to $x$, and to show that if $x$ and $x^{\prime}$ are enough close, then $H_{x}^{s_i}$ and $H_{ x^{\prime}}^{s_i}$ are close too.

We note that in what follows, the parallel translation in $Y$ is well defined since this depends only on the $C^1$ structure of the Riemannian metric, and indeed the metric on $Y$ is induced from a $C^{1,\alpha}$-Riemannian one by Property (3) of \cref{lem:target_space}. With this we have the following version. (We provide a proof, appropriately modified from that in \cite{Besson-Courtois-Gallot:95}, in our context for completeness.)

Let $\Omega^{s}\subset X$ be the full-measure subset where $d_xF_s$, $H_x^s$ and $K_x^s$ are well defined and the first equality in \cref{eq:dF=sKA} holds.

\begin{lemma}[Lemma 7.5b of \cite{Besson-Courtois-Gallot:95} and cf. Lemma A.6 of \cite{Sambusetti-1999}]\label{lem:Parallel_translate}
For any sequence $\set{s_i}$ converging to $h(\bar{X})$, let $x_1, x_2 \in \Omega^{s_i}$, let $q_1=\til{F}_{s_i}(x_1), q_2=\til{F}_{s_i}(x_2)$ and let $\beta$ be a minimizing $d_Y$-geodesic from $q_1$ to $q_2$. If $P_{q_2}$ denotes the parallel translation from $T_{q_1}Y$ to $T_{q_2}Y$ along $\beta$, one has:
\begin{align*}
\left\|H_{x_1}^{s_i}-P_{q_2}^{-1} \circ H_{x_2}^{s_i} \circ P_{q_2}\right\| \leq C\left[d_X(x_1, x_2)+d_Y(q_1, q_2)\right]
\end{align*}
for some constant $C$ which does not depend on $i, x_1, x_2$.
\end{lemma}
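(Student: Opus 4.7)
The plan is to decompose
\[
H_{x_1}^{s_i} - P_{q_2}^{-1}\circ H_{x_2}^{s_i}\circ P_{q_2} = I + II,
\]
where
\[
I := \int_{\til{Y}}\Bigl[\nabla_{q_1}\rho_z\otimes d_{q_1}\rho_z - P_{q_2}^{-1}\bigl(\nabla_{q_2}\rho_z\otimes d_{q_2}\rho_z\bigr)\Bigr]\,d\eta_{x_1}^{s_i}(z)
\]
varies the pointwise tensor while fixing the measure, and
\[
II := \int_{\til{Y}} P_{q_2}^{-1}\bigl(\nabla_{q_2}\rho_z\otimes d_{q_2}\rho_z\bigr)\,d\bigl(\eta_{x_1}^{s_i}-\eta_{x_2}^{s_i}\bigr)(z)
\]
varies the measure while fixing the parallel--translated tensor at $q_2$. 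Since $F_{s_i}$ is Lipschitz by \cref{lem:Lips}, $d_X(x_1,x_2)$ and $d_Y(q_1,q_2)$ are comparable, but keeping both on the right of the target bound is the natural presentation.

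For $I$, I would combine item (4) of \cref{lem:target_space} with the $C^{1,1}$-regularity of $d_Y(z,\cdot)$ away from $z$ on the Cartan--Hadamard space $\til Y$ (used already before \cref{lem:barLip}) to obtain the Hessian bound $\|\Hess_y\rho_z\|\le C/\rho_z(y)$ along the segment $\beta$ once $d_Y(q_1,q_2)\le\tfrac12\rho_z(q_1)$. This yields
\[
\bigl\|\nabla_{q_1}\rho_z\otimes d_{q_1}\rho_z - P_{q_2}^{-1}(\nabla_{q_2}\rho_z\otimes d_{q_2}\rho_z)\bigr\|\le \frac{C\,d_Y(q_1,q_2)}{\rho_z(q_1)},
\]
whose $1/\rho_z(q_1)$ singularity is precisely cancelled by the factor $\rho_z(q_1)$ in the density $d\eta_{x_1}^{s_i}=\rho_z(q_1)\,d\sigma_{x_1}^{s_i}/\|\hat\eta_{x_1}^{s_i}\|$. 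Consequently $\|I\|\le C\,d_Y(q_1,q_2)\,\|\sigma_{x_1}^{s_i}\|/\|\hat\eta_{x_1}^{s_i}\|$. For $II$, I would lift to $\bar X$: since $d\hat\eta_{x_j}^{s_i}=\til f_*\bigl(\rho_{\til f(\cdot)}(q_j)\,e^{-s_i d(x_j,\cdot)}\,d\tmX\bigr)$, the elementary estimates
\[
|e^{-s_i d(x_1,x')}-e^{-s_i d(x_2,x')}|\le s_i e^{s_i d_X(x_1,x_2)}e^{-s_i d(x_2,x')}d_X(x_1,x_2),\qquad |\rho_z(q_1)-\rho_z(q_2)|\le d_Y(q_1,q_2),
\]
produce a total--variation bound on $\hat\eta_{x_1}^{s_i}-\hat\eta_{x_2}^{s_i}$ of the desired form. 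After normalization, and using $\|P_{q_2}^{-1}(\nabla_{q_2}\rho_z\otimes d_{q_2}\rho_z)\|=1$, this bounds $\|II\|$ by $C[d_X(x_1,x_2)+d_Y(q_1,q_2)]$.

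To make the constants independent of $i$ I must control $\|\sigma_x^{s_i}\|$ from above and $\|\hat\eta_x^{s_i}\|$ from below, uniformly for $x$ in a fundamental domain of the $\Gamma$-action and $s_i\in[h(\bar X),s_0]$. The upper bound on $\|\sigma_x^{s_i}\|$ follows from the exponential decay of the density $e^{-s_i d(x,\cdot)}$ together with cocompactness, exactly as in the proof of \cref{lem:Lips}. The lower bound on $\|\hat\eta_x^{s_i}\|$ reduces to showing that the $\sigma_x^{s_i}$-average of $\rho_z(q)$ is bounded below, which holds because the support of $\sigma_x^{s_i}$ cannot collapse into a vanishingly small ball about $q=\til F_{s_i}(x)$ (again a consequence of cocompactness and the uniform exponential tail). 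The main obstacle is precisely the delicate Hessian cancellation in Step $I$: without the weight $\rho_z$ built into the definition of $\eta_x^s$ the singularity of $\Hess\rho_z$ at $z$ would be unintegrable, so one must exploit the specific normalization of $\eta_x^s$ throughout. A secondary obstacle is keeping these uniform constants as $s_i\searrow h(\bar X)$, which rests on the finiteness of $\mu_x^s$ for every $s>h(\bar X)$ and on the fact that $s_i$ is pinched in a bounded interval.
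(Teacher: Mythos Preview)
Your decomposition $I+II$ and the idea that the weight $\rho_z$ in $d\eta_x^s$ absorbs the Hessian singularity are exactly the paper's strategy, but two of the uniform bounds you invoke are false as stated. First, $\|\Hess_y\rho_z\|\le C/\rho_z(y)$ is wrong: by item~(4) of \cref{lem:target_space} the Hessian on $(\nabla\rho_z)^\perp$ is pinched between $\coth(\rho_z)$ and a constant multiple of $\coth$, and these tend to positive constants (not $0$) as $\rho_z\to\infty$. The correct bound is $\|\Hess_y\rho_z\|\le C\coth(\rho_z(y))\le C(1+1/\rho_z(y))$; after multiplying by the density $\rho_z(q_1)$ this produces an additional term $\|\hat\eta_{x_1}^{s_i}\|$ in the numerator, which is harmless. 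You also still owe the region $\{z:\rho_z(q_1)<2d_Y(q_1,q_2)\}$, where the trivial bound $\|\nabla\rho_z\otimes d\rho_z\|\le 1$ combined with $\rho_z(q_1)<2d_Y(q_1,q_2)$ in the weight does the job; the paper handles this by splitting over a fixed ball $B=B(q_1,d_Y(q_1,q_2)+10)$ and its complement.

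Second, and more seriously, your reduction ``$\|\sigma_x^{s_i}\|$ bounded above and $\|\hat\eta_x^{s_i}\|$ bounded below, uniformly in $i$'' is not achievable: by the very definition of $h(\bar X)$, the mass $\|\sigma_x^{s}\|=\|\mu_x^s\|$ may diverge as $s\searrow h(\bar X)$, and then so does $\|\hat\eta_x^s\|$. What you actually use---and what is true---is only that the \emph{ratio} $\|\sigma_x^{s}\|/\|\hat\eta_x^{s}\|$ is uniformly bounded. The paper proves this directly: since $\til f$ is a quasi-isometry and $X$ is compact, $\til f^{-1}(B(\til F_s(x),1))$ lies in a fixed finite union of fundamental domains, so $\sigma_x^s(B(\til F_s(x),1))\le C'\sigma_x^s(\til Y\setminus B(\til F_s(x),1))$ with $C'$ independent of $s\in(h(\bar X),h(\bar X)+1]$, whence $\|\hat\eta_x^s\|\ge\sigma_x^s(\til Y\setminus B(\til F_s(x),1))\ge\|\sigma_x^s\|/(1+C')$. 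You need to rewrite both the $I$ and $II$ estimates so that only such ratios appear after normalization.
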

\begin{proof}
We begin by noting that since $\norm{H_{x_2}^{s_i}}\leq 1$ and $P_{q_2}$ is orthogonal, the estimate is trivial if $d(q_1,q_2)\geq 1$ so we assume $d(q_1,q_2)<1$.

Let $h_x^s(u)=\int_{\til Y}(d_{\til{F}(x)}\rho_z(u))^2 d\eta_{x}^s(z)
$ denote the $(2,0)$-form corresponding to $H_x^s$. Since $P_{q_2}^{-1}=P_{q_2}^{*}$, in dualizing the $(1,1)$-form the equivalent expression we want is,
\begin{align}
    \left\|h_{x_1}^{s_i}\right. & \left.-h_{x_2}^{s_i} \circ P_{q_2}\right\| \leq C\left[d_X(x_1, x_2)+d_Y(q_1, q_2)\right].
\end{align}

First, to estimate $\left\|h_{x_1}^{s_i}- h_{x_2}^{s_i} \circ P_{q_2}\right\|$, we let $Z$ denote a unit parallel field along the minimal geodesic  $\beta$ from $q_1$ to $q_2$.
(The existence of such a field only depends on the $C^1$ regularity of the Riemannian metric on $Y$.)
Then
\begin{align*} &
\left|\int_{\til{Y}} (d_{q_2}\rho_z(Z(q_2)))^2 d\eta_{x_2}^s(z) -\int_{\til{Y}} (d_{q_1}\rho_z(Z(q_1)))^2 d\eta_{x_1}^s(z) \right|\\
&=\left| \int_{\til{Y}} \left[ (d_{q_2}\rho_z(Z(q_2)))^2 -(d_{q_1}\rho_z(Z(q_1)))^2 \right] d\eta_{x_2}^s(z) -\int_{\til{Y}} (d_{q_1}\rho_z(Z(q_1)))^2 (d\eta_{x_1}^s(z)-d\eta_{x_2}^s(z)) \right|\\
& \leq \left| \int_{\til{Y}} \left[ (d_{q_2}\rho_z(Z(q_2)))^2 -(d_{q_1}\rho_z(Z(q_1)))^2 \right] d\eta_{x_2}^s(z)\right| + \norm{\eta_{x_1}^s-\eta_{x_2}^s}
\end{align*}
Next we show \begin{align}
    \norm{\eta_{x_2}^s-\eta_{x_1}^s}\leq C(d(x_1,x_2)+d(\til{F}_s(x_1),\til{F}_s(x_2)) \label{eq:eta-diff}
    \end{align}
    for some constant $C\geq\! 1$ depends only on $\diam X, h(\bar{X})$ for all $s \in\! (h(\bar{X}), h(\bar{X})+1]$, and thus independent of $x_1, x_2$. This estimate corresponds to \cite[(21)]{Sambusetti-1999}.

Recall the definition of $\eta$ measures in \cref{eta-measures}.
Observe
\begin{align*}
   \norm{\eta_{x_2}^s-\eta_{x_1}^s} & =  \frac{\norm{ \norm{\hat{\eta}_{x_1}^{\, s}} \hat{\eta}_{x_2}^{\, s} - \norm{\hat{\eta}_{x_2}^{\, s}}\hat{\eta}_{x_1}^{\, s}}}{\norm{\hat{\eta}_{x_1}^{\, s}} \, \norm{\hat{\eta}_{x_2}^{\, s}}}\\
   & \le \frac{\norm{\hat{\eta}_{x_2}^{\, s} -\hat{\eta}_{x_1}^{\, s}} + |\norm{\hat{\eta}_{x_1}^{\, s}} -\norm{\hat{\eta}_{x_2}^{\, s}}|}{ \norm{\hat{\eta}_{x_2}^{\, s}}}  \le 2 \frac{\norm{\hat{\eta}_{x_2}^{\, s} -\hat{\eta}_{x_1}^{\, s}}}{ \norm{\hat{\eta}_{x_2}^{\, s}}}.
\end{align*}
Also
\begin{align*}
\norm{\hat{\eta}_{x_2}^{\, s} -\hat{\eta}_{x_1}^{\, s}} & = \norm{d(\cdot,\til{F}_s(x_2))\sigma_{x_2}^s-d(\cdot,\til{F}_s(x_1))\sigma_{x_1}^s} \\
& \le \norm{d(\cdot,\til{F}_s(x_2))(\sigma_{x_2}^s -\sigma_{x_1}^s)} +  d(\til{F}_s(x_1),\til{F}_s(x_2)) \, \norm{\sigma_{x_1}^s} \\
& \le C(\diam X, h(\bar{X})) \left[\norm{\hat{\eta}_{x_2}^{\, s}} d_X(x_1, x_2) + d(\til{F}_s(x_1),\til{F}_s(x_2)) \, \norm{\sigma_{x_2}^s} \right]
\end{align*}
for $s \in (h(\bar{X}), h(\bar{X}) +1]$. Here we used the estimate that $\norm{\sigma_{x_1}^s} \le e^{s d_X(x_1, x_2)} \norm{\sigma_{x_2}^s}$ for the last term.  Hence
\begin{equation}
 \norm{\eta_{x_2}^s-\eta_{x_1}^s} \le C(\diam X, h(\bar{X})) \left[ d_X(x_1, x_2) + d(\til{F}_s(x_1),\til{F}_s(x_2)) \, \tfrac{\norm{\sigma_{x_2}^s}}{\norm{\hat{\eta}_{x_2}^{\, s}}} \right].  \label{eq:eta-diff est}
\end{equation}
Now we note that
\[
\frac{\norm{\sigma_x^s}}{\norm{\hat{\eta}_x^s}}\leq\frac{\norm{\sigma_x^s}}{\hat{\eta}_x^s(\til{Y}\setminus B(\til{F}_s(x),1))} \leq \frac{\norm{\sigma_x^s}}{\sigma_x^s(\til{Y}\setminus B(\til{F}_s(x),1))},
\]
since for $z\in \til{Y}\setminus B(\til{F}_s(x),1)$ we have $\rho_z(\til{F}_s(x))\geq 1$. Since $X= \bar{X}/\Gamma$ is compact, $\til{f}$ is a quasi-isometry on $\bar{X}$.
Hence we have $\til{f}^{-1}(B(\til{F}_s(x),1))$ belongs to a finite union of fundamental domains in $\bar{X}$, independent of the choice of $x\in X$.
This gives  that
\[
\int_{\til{f}^{-1}(B(\til{F}_s(x),1))} e^{-s d(x,z)} d\mc{H}^N(z) \leq C'\int_{\bar{X}\setminus \til{f}^{-1}(B(\til{F}_s(x),1))} e^{-s d(x,z)} d\mc{H}^N(z)
\]
for some $C'>0$ independent of $x$ and $h(\bar{X})\leq s \leq h(\bar{X})+1$.
So we obtain the following bound,
\begin{equation}\label{eq:sigma eta ratio}
\frac{\norm{\sigma_x^s}}{\norm{\hat{\eta}_x^s}}\leq 1+C'.
\end{equation}
This bound combined with \cref{eq:eta-diff est} gives \cref{eq:eta-diff}.

For the remaining term $\abs{\int_{\til{Y}} \left[ (d_{q_2}\rho_z(Z(q_2)))^2 -(d_{q_1}\rho_z(Z(q_1)))^2 \right] d\eta_{x_2}^s(z)}$, we must contend with the fact that the second fundamental form of the distance function explodes when the distance is near 0. For this reason, we need to split the analysis into two regions, where the integrating variable is in  a compact region containing  $q_1$ and $q_2$ and the remaining region.

First we analyze the compact region containing the geodesic from $q_1$ to $q_2$.

We have by the parallelism of $Z$ and the fundamental theorem of calculus that,
\begin{align}\label{eq:rho_d_rhosq}
\begin{split}
\left| \rho_z(q_2) \right. &\left.( d_{q_2}\rho_z(Z(q_2)))^2-\rho_z(q_1)(d_{q_1}\rho_z(Z(q_1)))^2 \right| \\
&\leq \left(\sup_{t} \Big| d_{\beta(t)}\rho_z(\beta'(t))(d_{\beta(t)}\rho_z(Z(\beta(t))))^2 +\right.\\
&\hspace{2cm}\left. 2\rho_z(\beta(t))\inner{\beta'(t),Dd_{\beta(t)}\rho_z(Y(\beta(t)))}\Big| \right) d_Y(q_1,q_2) \\
&\leq [1+ 2 C \rho_z(\beta(t_0))\coth( \rho_z(\beta(t_0)))] d_Y(q_1,q_2)\\
&\leq [1+ 2 C (1+\rho_z(\beta(t_0)))]d_Y(q_1,q_2)\\
&\leq [1+ 2 C (1+\rho_z(q_1)+d_Y(q_1,q_2))]d_Y(q_1,q_2),
\end{split}
\end{align}
where $C$ is the square root of the negative of the lower curvature bound, and $t_0$ is the value of $t$ achieving the supremum $\sup_{t}d(z,\beta(t))\coth(d(z,\beta(t)))$. Note by convexity of $\rho_z$, we have $\beta(t_0)\in\set{q_1,q_2}$. Here we used the Hessian comparison for the second inequality.

Let $B=B(q_1,d(q_1,q_2)+10)\subset \til{Y}$ be a fixed ball of the given radius about $q_1$. For $z\in \til{Y}\setminus B$, a similar estimate gives,
\begin{align*}
\left| \right. &\left.d_{q_2}\rho_z(Z(q_2))-d_{q_1}\rho_z(Z(q_1)) \right| \\
&\leq \left(\sup_{t} \Big| \inner{\beta'(t),Dd_{\beta(t)}\rho_z(Y(\beta(t)))}\Big| \right) d_Y(q_1,q_2) \\
&\leq [ C \sup_{t\in [0,d(q_1,q_2)]}\coth( \rho_z(\beta(t)))] d_Y(q_1,q_2)\\
&\leq 2 C d_Y(q_1,q_2),
\end{align*}
since $\coth( \rho_z(\beta(t)))<2$ under the conditions on $z$ and $t\in [0,d(q_1,q_2)]$. Hence
\begin{align}\label{eq:d_rhosq}
\left| (d_{q_2}\rho_z(Z(q_2)))^2-(d_{q_1}\rho_z(Z(q_1)))^2 \right| \leq 4C d_Y(q_1,q_2),
\end{align}
since $\abs{d_{q_2}\rho_z(Z(q_2))+d_{q_1}\rho_z(Z(q_1))}\leq 2$, each component being a dual to a unit vector.

Now we split the integral $\abs{\int_{\til{Y}} \left[(d_{q_2}\rho_z(Z(q_2)))^2 -(d_{q_1}\rho_z(Z(q_1)))^2 \right] d\eta_{x_2}^s(z)}$ into the portions on $B$ and $\til{Y}\setminus B$.

 Using \cref{eq:d_rhosq}, we have for the $\til{Y}\setminus B$ portion,
\begin{align*}
 \left| \int_{\til{Y}\setminus B} \left[ (d_{q_2}\rho_z(Z(q_2)))^2 -(d_{q_1}\rho_z(Z(q_1)))^2 \right] d\eta_{x_2}^s(z)\right| \leq 4C d_Y(q_1,q_2).
\end{align*}

For the portion on $B$, using \cref{eq:rho_d_rhosq}, \cref{eq:sigma eta ratio} together with the assumptions $d(q_1,q_2)<1$, we obtain
\begin{align*}
\left| \int_{B} \right. & \left.\rho_z(q_2)(d_{q_2}\rho_z(Z(q_2)))^2 -\rho_z(q_1)(d_{q_1}\rho_z(Z(q_1)))^2 \frac{d\sigma_{x_1}^s(z)}{\norm{\hat{\eta}_{x_1}^{\, s}}} \right|\\
& \leq \int_{B} [1+ 2 C (1+\rho_z(q_1)+d_Y(q_1,q_2))]d_Y(q_1,q_2) \frac{d\sigma_{x_1}^s(z)}{\norm{\hat{\eta}_{x_1}^{\, s}}} \\
&\leq (1+2C)d_Y(q_1,q_2) \frac{\norm{\sigma_{x_1}^s}}{\norm{\hat{\eta}_{x_1}^{\, s}}}+2C d_Y(q_1,q_2)^2 \frac{\norm{\sigma_{x_1}^s}}{\norm{\hat{\eta}_{x_1}^{\, s}}} \\
& \quad + 2C d_Y(q_1,q_2) \int_{\til Y} \rho_z(q_1)\frac{d\sigma_{x_1}^s(z)}{\norm{\hat{\eta}_{x_1}^{\, s}}} \\
&\leq  ((1+2C)(1+C')+2C)d_Y(q_1,q_2)+ 2C(1+C')d_Y(q_1,q_2)^2 \\
&\leq C_0 d_Y(q_1,q_2).
\end{align*}
This completes the estimate.
\end{proof}

Given the existence of the parallel translation in our context, the proof of the lemma below follows similarly to the proof in Lemma A.6 of \cite{Sambusetti-1999} which establishes the corresponding Lemmas 7.5 and 7.5b of \cite{Besson-Courtois-Gallot:95} for the formulation of the barycenter map using the measures $\sigma_x^{s_i}$, as opposed to the calibrating forms used in \cite{Besson-Courtois-Gallot:95}.
However, we need to substitute our version of \cref{lem:H_converge} and its proof instead of Lemma 7.5a in the proof of 7.5b of \cite{Besson-Courtois-Gallot:95}. We present this here together with the other necessary modifications.

\begin{lemma}[cf. Lemma 7.5a of \cite{Besson-Courtois-Gallot:95}]\label{lem:H_implies_deriv}
There is a constant $C\geq 1$ only depending on $N$ such that if $s<h(\bar{X})+1$, $\norm{H_{x}^{s}- \frac1N I}_{op}<\frac13$, and $d_x\til{F_s}$ is defined for $\mc{H}^1$-a.e. $x$ along a geodesic between $x_1$ and $x_2$, then $d(\til{F}_s(x_1),\til{F}_s({x_2}))<Cd(x_1,x_2)$.
\end{lemma}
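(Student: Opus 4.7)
The plan is to bound $\|d_x\tilde F_s\|_{\op}$ pointwise from the identity $d_x\tilde F_s = s(L_x^s+K_x^s)^{-1}\circ A_x^s$ established in the previous lemma, and then integrate this bound along the given geodesic using that $F_s$ is already globally Lipschitz (\cref{lem:Lips}). Since $L_x^s$ is positive semi-definite, the chief ingredients are an upper bound on $\|A_x^s\|_{\op}$ in terms of $\|H_x^s\|_{\op}$ and a lower bound on $K_x^s$ in terms of $H_x^s$.

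First I would bound $A_x^s$. Dualising \eqref{A_x^s}, for unit vectors $u\in T_x X$ and $v\in T_{\tilde F_s(x)}\tilde Y$ one has
\[
\inner{A_x^s u,v}=\int_{\tilde Y}\inner{\nabla_{\tilde F_s(x)}\rho_z,v}\,G_{x,z}(u)\,d\eta_x^s(z).
\]
Because each $G_{x,z}$ is a weak $1$-form of norm at most $1$, so $|G_{x,z}(u)|\le 1$, Cauchy--Schwarz gives
\[
|\inner{A_x^s u,v}|^{2}\le \int_{\tilde Y}\inner{\nabla\rho_z,v}^{2}\,d\eta_x^s(z)=\inner{H_x^s v,v}\le \|H_x^s\|_{\op},
\]
so $\|A_x^s\|_{\op}\le\|H_x^s\|_{\op}^{1/2}$.

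Next I would bound $K_x^s$ from below by $I-H_x^s$. In the locally $\CAT(-1)$ case this is exactly the estimate already used in the proof of \cref{prop:Jac_est}. In the rank one symmetric case, since each $J_i$ is orthogonal with $J_i^2=-I$, the symmetric form $-J_i H_x^s J_i$ is positive semidefinite, so
\[
K_x^s\ge I-H_x^s-\sum_{i=1}^{d-1}J_i H_x^s J_i=I-H_x^s+\sum_{i=1}^{d-1}(-J_i H_x^s J_i)\ \ge\ I-H_x^s.
\]
The hypothesis $\|H_x^s-\tfrac{1}{N}I\|_{\op}<\tfrac{1}{3}$ together with $N\ge 3$ yields $\|H_x^s\|_{\op}<\tfrac{1}{N}+\tfrac{1}{3}\le\tfrac{2}{3}$, hence $K_x^s\ge \tfrac{1}{3}I$ and $\|(K_x^s)^{-1}\|_{\op}\le 3$. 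Combined with $s<h(\bar X)+1\le h_0+1$, this gives the pointwise bound
\[
\|d_x\tilde F_s\|_{\op}\le s\,\|(K_x^s)^{-1}\|_{\op}\,\|A_x^s\|_{\op}\le (h_0+1)\cdot 3\cdot\sqrt{2/3},
\]
which is a constant $C=C(N)$.

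Finally I would pass from this pointwise estimate to a genuine Lipschitz estimate along the given geodesic. Since $F_s$ is globally Lipschitz by \cref{lem:Lips}, the restriction of $\tilde F_s$ to a minimising geodesic $\gamma\co[0,L]\to\bar X$ from $x_1$ to $x_2$ is an absolutely continuous curve in the $C^{1,\alpha}$ Riemannian manifold $\tilde Y$. The hypothesis that $d_x\tilde F_s$ exists for $\mathcal{H}^1$-almost every $x\in\gamma$ permits the chain rule, giving $|(\tilde F_s\circ\gamma)'(t)|\le\|d_{\gamma(t)}\tilde F_s\|_{\op}\le C$ for a.e.\ $t$. Integrating,
\[
d(\tilde F_s(x_1),\tilde F_s(x_2))\le\int_{0}^{L}\|d_{\gamma(t)}\tilde F_s\|_{\op}\,dt\le C\,L=C\,d(x_1,x_2),
\]
which is the claimed estimate. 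The main obstacle I anticipate is the last step: justifying the chain rule for the metric-measure tangential differential along a specific geodesic when $\bar X$ is only an $\RCD$ space. This is handled by invoking the global Lipschitz regularity of $F_s$ from \cref{lem:Lips} together with the Sobolev--to--Lipschitz property \cref{lem:C-Lip}, so that one can work with absolutely continuous restrictions and the a.e.\ formula for their metric speed.
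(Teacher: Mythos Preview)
Your proposal is correct and follows essentially the same route as the paper: bound $\|d_x\til F_s\|$ pointwise via $L_x^s+K_x^s\ge I-H_x^s\ge\tfrac13 I$ and then upgrade to a Lipschitz bound using the Sobolev-to-Lipschitz property (\cref{lem:C-Lip}). Your Cauchy--Schwarz estimate $\|A_x^s\|\le\|H_x^s\|^{1/2}$ is valid but unnecessary --- the paper simply uses $\|A_x^s\|\le 1$ --- and the paper invokes \cref{lem:C-Lip} directly rather than first attempting a chain-rule integration along the geodesic.
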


\begin{proof}
Recall we have
\[
d_x\til{F_s}=s (L_x^s+K_x^s)^{-1}\of A_x^s,
\]
with $K_x^s, L_x^s,$ and $A_x^s$ defined in \eqref{K_x^s}, \eqref{L_x^s}, and \eqref{A_x^s}.

In what follows set $\norm{\cdot}=\norm{\cdot}_{op}$ to be the operator norm with respect to the relevant linear structures on $T_xX$ and $T_{F_{s_i}(x)}Y$ or  $T_{F_{s_j}(x)}Y$ as the context demands, for a regular point $x\in X$.

Hence
\[
\norm{d_x\til{F_{s}}}\leq s \norm{(L_x^{s}+K_x^{s})^{-1}}\norm{A_{x}^{s}}.
\]
Since $K_x^{s}=\left(I-H_{x}^{s}-\sum_{k=1}^{d-1} J_{k} H_{x}^{s} J_{k}\right)$, $L_x$ and $-\sum_{k=1}^{d-1} J_{k} H_{x}^{s} J_{k}$ are positive semidefinite, $\norm{A_{x}^{s_i}}\leq 1$, and $N\geq 3$ we have,
\[
\norm{d_x\til{F_{s}}} \leq 3(h(\bar{X})+1).
\]
Taking $C=3(h(\bar{X})+1)$, the result follows from the fact that $\til{F_{s}}$ is Lipschitz and Lemma~\ref{lem:C-Lip}.
\end{proof}

Set $\Omega$ be the intersection of $\cap_{i\in \N} \Omega^{s_i}$ with the set where the conclusion of \cref{lem:H_converge} holds. Note that $\Omega$ is a full measure subset of $X$.

Given our versions, \cref{lem:H_implies_deriv} and \cref{lem:Parallel_translate}, of Lemmas 7.5a and 7.5b of \cite{Besson-Courtois-Gallot:95}, the proof of the following lemma now follows identically from the proof of Lemma 7.5 in \cite{Besson-Courtois-Gallot:95} except restricted to the set $\Omega$.

\begin{lemma}[cf. Lemma 7.5 of \cite{Besson-Courtois-Gallot:95}]\label{lem:H_uniform}
The endomorphisms $H_{x}^{s_i}$ converge uniformly to $\frac1N I$ on $\Omega\subset X$, as $s_i \rightarrow h(\bar{X})$.
\end{lemma}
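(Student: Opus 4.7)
The strategy is a contradiction argument modeled on the proof of Lemma 7.5 in \cite{Besson-Courtois-Gallot:95}, but executed on the full-measure set $\Omega \subset X$. Suppose the uniform convergence on $\Omega$ fails; then there exist $\varepsilon > 0$, a decreasing subsequence (still denoted $\{s_i\}$) tending to $h(\bar X)$, and points $x_i \in \Omega$ with $\|H_{x_i}^{s_i}-\tfrac{1}{N}I\|_{op}\geq\varepsilon$. By compactness of $X$, after further subsequencing $x_i\to x_\infty$ for some $x_\infty\in X$.

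The first step is to approximate $x_i$ by points $y_i$ where the convergence of $H^{s_i}$ is uniformly controlled. By \cref{lem:H_converge} and Egorov's theorem, for each $\eta>0$ one can extract a set $E\subset\Omega$ with $\mathcal{H}^N(X\setminus E)<\eta$ and an index $i_0=i_0(\eta)$ such that $\|H_y^{s_i}-\tfrac{1}{N}I\|_{op}<\tfrac{1}{3}$ uniformly in $y\in E$ and $i\geq i_0$. Since $\Omega$ is dense in $X$, a sequence $y_i\in E$ can be chosen with $y_i\to x_\infty$ and $d_X(x_i,y_i)\to 0$.

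The second step is to obtain a Lipschitz bound on the natural maps $\til F_{s_i}$ that is uniform in $i$. By \cref{lem:H_implies_deriv}, the bound $\|d_x\til{F}_{s_i}\|_{op}\leq C:=3(h(\bar X)+1)$ holds for a.e.\ $x\in E$ whenever $i\geq i_0$. Letting $\eta\to 0$ via a diagonalization, combined with the Sobolev-to-Lipschitz property in \cref{lem:C-Lip} (applicable because $\til F_{s_i}$ is already a Sobolev map by \cref{lem:Lips}), upgrades this a.e.\ bound to $\op{Lip}(\til F_{s_i})\leq C$ for every sufficiently large $i$. Now applying \cref{lem:Parallel_translate} to the pair $(x_i, y_i)$ and invoking this Lipschitz control gives
\begin{align*}
\|H_{x_i}^{s_i}-P_i^{-1}\circ H_{y_i}^{s_i}\circ P_i\|_{op}
 &\leq C_0\bigl[d_X(x_i,y_i)+d_Y(\til F_{s_i}(x_i),\til F_{s_i}(y_i))\bigr] \\
 &\leq C_0(1+C)\,d_X(x_i,y_i)\ \longrightarrow\ 0.
\end{align*}
Since parallel transport $P_i$ is orthogonal and therefore fixes the scalar operator $\tfrac{1}{N}I$, and since $H_{y_i}^{s_i}\to\tfrac{1}{N}I$ by the choice of $y_i\in E$, we conclude $H_{x_i}^{s_i}\to\tfrac{1}{N}I$, contradicting $\|H_{x_i}^{s_i}-\tfrac{1}{N}I\|_{op}\geq\varepsilon$.

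The main anticipated obstacle is the second step: converting the a.e.\ derivative bound available only on the large-but-not-full-measure set $E$ into a genuine global Lipschitz bound uniform in $i$. In the smooth Riemannian framework of \cite{Besson-Courtois-Gallot:95} this implication is automatic, but in our setting it relies on the non-collapsed $\RCD$ structure (ensuring a.e.\ existence of the tangential differential of $\til F_{s_i}$), on the Sobolev-to-Lipschitz property for maps between $\RCD$ spaces, and on a careful handling of the Egorov parameter $\eta\to 0$ together with the $\Gamma$-equivariance of $\til F_{s_i}$ to remove any dependence on localization near $x_\infty$.
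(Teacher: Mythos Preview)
Your proposal has a genuine gap in the second step. The claim that ``letting $\eta\to 0$ via a diagonalization, combined with the Sobolev-to-Lipschitz property, upgrades this a.e.\ bound to $\op{Lip}(\til F_{s_i})\leq C$ for every sufficiently large $i$'' cannot be justified as stated. For each \emph{fixed} index $i$, the set where $\|H_x^{s_i}-\tfrac{1}{N}I\|_{op}\geq\tfrac{1}{3}$ may have positive $\mc{H}^N$-measure (and indeed must, if the uniform convergence you are trying to prove fails). On that set you have no control on $\|d_x\til F_{s_i}\|$, so the hypothesis of \cref{lem:C-Lip} (namely $G_F\leq L$ for $\mf{m}_X$-a.e.\ $x$) is unavailable. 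The diagonal procedure does not help: as $\eta\to 0$, Egorov's threshold $i_0(\eta)$ may tend to infinity, so no single $i$ lies above $i_0(\eta)$ for \emph{all} $\eta$. In effect, obtaining a global $C$-Lipschitz bound on $\til F_{s_i}$ by this route already presupposes the uniform convergence of $H^{s_i}$ that you are trying to establish, so the argument is circular.

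The paper follows Besson--Courtois--Gallot in bypassing any global Lipschitz bound. Instead, \cref{lem:H_implies_deriv} and \cref{lem:Parallel_translate} are combined in a bootstrap (open--closed) argument along the geodesic from a good point $y_i$ to the bad point $x_i$: on the initial segment where the $\tfrac{1}{3}$-bound on $H^{s_i}$ holds $\mc{H}^1$-a.e., \cref{lem:H_implies_deriv} controls $d_Y(\til F_{s_i}(y_i),\til F_{s_i}(\cdot))$ by $C\,d_X$; feeding this back into \cref{lem:Parallel_translate} shows that $H^{s_i}$ stays within $\tfrac{1}{3}$ of $\tfrac{1}{N}I$ slightly further along, and one propagates all the way to $x_i$. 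This yields the needed bound on $d_Y(\til F_{s_i}(x_i),\til F_{s_i}(y_i))$ without ever invoking a full-measure derivative estimate. Your outline can be repaired by replacing the second step with this continuity argument (carried out on $\Omega$), after which your final paragraph goes through verbatim.
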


\begin{lemma}\label{lem:KL_converge}
For any sequence $s_i\to h(\bar{X})$, the quadratic forms $K^{s_i}_x$ and $L^{s_i}_x$ converge to $\frac{N-2+d}{N} I$ and $0$ respectively uniformly on $\Omega$.
\end{lemma}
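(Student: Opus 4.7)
The plan is to express $K_x^s$ and $L_x^s$ in terms of $H_x^s$ plus error terms controlled by a uniformly vanishing ratio, and then invoke \cref{lem:H_uniform}. Set $M_z := I - E_y - \sum_{k=1}^{d-1} J_k E_y J_k$ (with no $J$-terms in the \cref{thm:schwarz} case, effectively $d=1$). The explicit formula $D\nabla_y\rho_z = M_z\coth(\rho_z(y)M_z)$ from the proof of \cref{prop:Jac_est} yields $\|D\nabla_y\rho_z - M_z\|_{op} = O(e^{-2\rho_z(y)})$ as $\rho_z(y)\to\infty$, while $\rho_z(y)\|D\nabla_y\rho_z - M_z\|_{op}$ stays bounded as $\rho_z(y)\to 0$. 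Writing $K_x^s = \int_{\til Y} M_z\,d\eta_x^s(z) + R_x^s$ and using $d\eta_x^s(z) = \rho_z(\til F_s(x))\,d\sigma_x^s(z)/\|\hat\eta_x^s\|$, one obtains $\|R_x^s\|_{op} \leq C_1\|\sigma_x^s\|/\|\hat\eta_x^s\|$ for a constant $C_1$ depending only on $N$. The leading term equals $I - H_x^s - \sum_{k=1}^{d-1} J_k H_x^s J_k$. For $L_x^s$, the density factor cancels outright: $L_x^s = (1/\|\hat\eta_x^s\|)\int_{\til Y}E_y\,d\sigma_x^s(z)$, and since each $E_y$ is a rank-one orthogonal projection, $\|L_x^s\|_{op} \leq \|\sigma_x^s\|/\|\hat\eta_x^s\|$.

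It therefore suffices to show $\sup_{x\in X}\|\sigma_x^s\|/\|\hat\eta_x^s\|\to 0$ as $s\to h(\bar X)^+$. The Lipschitz constant of $\til f$ is finite (call it $L$) and, by \cref{lem:H_implies_deriv} applied with the uniform bound on $H_x^s$ from \cref{lem:H_uniform}, the natural maps $\til F_s$ are Lipschitz with constant $3(h(\bar X)+1)$ uniformly for $s$ near $h(\bar X)$; combined with equivariance and compactness of a fundamental domain of $X$ in $\bar X$, this forces the displacement $d(\til F_s(x),\til f(x))$ to be uniformly bounded, say by $C_2$. The triangle inequality then gives $\rho_{\til f(z')}(\til F_s(x)) \geq L^{-1}d(x,z') - C_2 - C_3$ for all $z'\in\bar X$, whence
\[
\frac{\|\hat\eta_x^s\|}{\|\sigma_x^s\|} \geq L^{-1}\cdot\frac{\int_{\bar X} d(x,z')e^{-sd(x,z')}\,d\tmX(z')}{\int_{\bar X} e^{-sd(x,z')}\,d\tmX(z')} - (C_2+C_3).
\]
Using the uniform exponential volume growth with rate $h(\bar X)$ on the cocompact space $\bar X$ (via the Manning-type result for length spaces quoted in \cref{sec:Intro} combined with the $\RCD$ Bishop--Gromov comparison), a layer-cake computation shows this ratio diverges like $(s-h(\bar X))^{-1}$ uniformly in $x\in X$, yielding the claim.

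Combining with \cref{lem:H_uniform} and $J_k^2 = -I$ (so that $J_k\cdot\frac{1}{N}I\cdot J_k = -\frac{1}{N}I$ and hence $-\sum_{k=1}^{d-1}J_k H_x^s J_k \to \frac{d-1}{N}I$), the main term in $K_x^{s_i}$ converges uniformly on $\Omega$ to $I - \frac{1}{N}I + \frac{d-1}{N}I = \frac{N+d-2}{N}I$, while $L_x^{s_i}\to 0$ uniformly on all of $X$. The main obstacle is the uniform-in-$x$ ratio estimate: the pointwise-in-$x$ divergence is a standard Poincar\'e-type fact, but propagating it uniformly along $\Gamma$-orbits requires both the uniform displacement bound and the uniform volume growth, each a nontrivial consequence of cocompactness coupled with the $\RCD$ structure. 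This technical step closely parallels Appendix A of \cite{Sambusetti-1999}, adapted to the present setting.
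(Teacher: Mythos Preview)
Your route differs from the paper's. You control $K_x^s-(I-H_x^s-\sum_k J_kH_x^sJ_k)$ and $L_x^s$ directly by the ratio $\|\sigma_x^s\|/\|\hat\eta_x^s\|$ and then drive that ratio to zero uniformly. The paper instead relies on the Jacobian squeeze: since $\Jac F_{s_i}\to(h(\bar X)/h_0)^N$ a.e.\ while the chain of inequalities \cref{eq:Jac-est1}--\cref{Jac-estimate-1} bounds it above, every intermediate inequality is forced to equality, giving $\det(L_x^{s_i}+K_x^{s_i})\to\det K_x^{s_i}\to\det(I-H_x^{s_i}-\cdots)$; the order relation $L_x^{s_i}+K_x^{s_i}\geq K_x^{s_i}\geq I-H_x^{s_i}-\cdots>0$ then yields both limits. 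Your treatment of $L_x^s$ is correct and arguably cleaner than the paper's, but there is a genuine gap for $K_x^s$ in the \cref{thm:schwarz} setting. The identity $D\nabla_y\rho_z=M_z\coth(\rho_z(y)M_z)$ is established in \cref{sec:proof4.8} only for rank-one symmetric targets; for a general locally $\CAT(-1)$ target the curvature upper bound gives merely $D\nabla_y\rho_z\geq I-E_y$, while the Alexandrov lower bound from \cref{lem:target_space} produces an upper Hessian estimate tending to $\sqrt{|K-\kappa(N-2)|}\,(I-E_y)$ at infinity, which strictly exceeds $I-E_y$ whenever $K<-(N-1)$. So $\|D\nabla_y\rho_z-M_z\|_{op}$ need not decay, and your remainder bound $\|R_x^s\|\leq C_1\|\sigma_x^s\|/\|\hat\eta_x^s\|$ fails. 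The determinant squeeze is what rescues $K_x^{s_i}$ in this case.

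A smaller issue: the uniform displacement bound $d(\til F_s(x),\til f(x))\leq C_2$ does not follow from equivariance, compactness of a fundamental domain, and a uniform Lipschitz constant alone---two $\rho$-equivariant Lipschitz maps can differ by an arbitrary additive constant (take $\Z$ acting on $\R$ by translation). One can sidestep this entirely: bound $\|\hat\eta_x^s\|/\|\sigma_x^s\|$ from below via the triangle inequality $2\int d(\til F_s(x),z)\,d\nu(z)\geq\iint d(z,z')\,d\nu(z)\,d\nu(z')$ for $\nu=\sigma_x^s/\|\sigma_x^s\|$, push through the quasi-isometry $\til f$ to the spread $\iint d(w,w')\,d\mu_x^s\,d\mu_x^s/\|\mu_x^s\|^2$, and observe that the latter diverges uniformly by exactly your layer-cake computation. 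This requires no information on where $\til F_s(x)$ sits.
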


\begin{proof}
By \cref{lem:H_uniform}, we have $H^{s_i}_x$ is converging to $\frac{1}{N}I$ uniformly and since $K_x^{s_i}=\left(I-H_{x}^{s_i}-\sum_{k=1}^{d-1} J_{k} H_{x}^{s_i} J_{k}\right)$, the latter approaches $\frac{N-2+d}{N}I$. For $s=s_i$ in \cref{eq:Jac-est1} and \cref{Jac-estimate-1} together with \cref{lem:BCG_est} we have that when $(\Jac F_{s_i})^2$ tends to $(\frac{N}{(N+d-2)^2})^N$ then all of the inequalities tends to equality which implies that $\det(L_x^{s_i}+K_x^{s_i})$ tends to $\det(K_x^{s_i})$. This implies $L_x^{s_i}$ tends to $0$ uniformly since $L_x^{s_i}$ is posititve semi-definite and $K_x^{s_i}$ tends to a multiple of Identity, and thus $L_x^{s_i}$ contributes positively to the denominator unless it is zero (note that $L_x^{s_i}$ and $K_x^{s_i}$ are almost simultaneously diagonalizable).
\end{proof}

\begin{proof}[Proof of Proposition \ref{prop:F_converge}]
As in the proof of \cref{lem:H_implies_deriv}, we have
\[
\norm{d_x\til{F_{s_i}}}\leq s \norm{(L_x^{s_i}+K_x^{s_i})^{-1}}\norm{A_{x}^{s_i}}.
\]
By uniformity of the convergence for any $\eps>0$ we may choose $n$ such that for $i>n$, we have $\norm{(L_x^{s_i}+K_x^{s_i})^{-1}-\frac{N}{h_0}I}\leq \eps$ and $0<s_i-h_0<\eps$ which yields
\[
\norm{d_x\til{F_{s_i}}}\leq (h_0+\eps)(\frac{N}{h_0}+\eps)\norm{A_x^{s_i}}.
\]
Note that we also have $\op{tr} A_x^s=1$ and $\norm{A_x^s}\leq 1$ for a.e. $x\in X$ and all $s>h$ since it is an average of component tensors with this property. Consequently, $\norm{d_x\til{F_{s_i}}}\leq N+1$ for all sufficiently large $i$.

 Hence the $\til{F_{s_i}}$, being Lipschitz by \cref{lem:Lips}, are in fact $(N+1)$-Lipschitz by \cref{lem:C-Lip} for all sufficiently large $i$. Since such a family is pointwise bounded and equicontinuous, there is a convergent subsequence by Arzela-Ascoli. Call this limit map $\til{F}$. (We again denote this convergent subsequence by $\set{s_i}$.)

On the other hand, from \cref{Jac-limit}, we have $\lim_{i\to\infty}\det A_x^{s_i}=\frac{1}{N^N}$ and hence
$\lim_{i\to\infty}\norm{A_x^{s_i}}=\frac{1}{N}$ for a.e. $x\in X$.
 Hence  $\limsup_{i\to\infty}\norm{d_x\til{F_{s_i}}}\leq 1$ for  a.e. $x\in X$.

 In fact the Lipschitz convergence implies convergence in $W^{1,\infty}$ and hence $\norm{d_x\til{F}}\leq 1$ for a.e. $x\in X$. Applying \cref{lem:C-Lip} again shows that $\til{F}$ is $1$-Lipschitz.

Since the family $\set{F_{s}}$ is equicontinuous and converges pointwise, the $F_{s}$ converge uniformly to $F$. By \cref{lem:psitilde}, the $F_{s}$ are homotopic to $f$ and thus so is their uniform limit $F$.

\end{proof}

\providecommand{\bysame}{\leavevmode\hbox to3em{\hrulefill}\thinspace}

\end{document}